\newcommand{\xmark}{\ding{55}}%
\newcommand{\cmark}{\ding{51}}%
\newcolumntype{L}[1]{>{\raggedright\let\newline\\\arraybackslash\hspace{0pt}}m{#1}}
\newcolumntype{C}[1]{>{\centering\let\newline\\\arraybackslash\hspace{0pt}}m{#1}}
\newcolumntype{R}[1]{>{\raggedleft\let\newline\\\arraybackslash\hspace{0pt}}m{#1}}
\tikzset{
	block/.style={align=center, minimum height=3em},
	arrow/.style={-{Stealth[]}}
}
\DeclarePairedDelimiter{\abs}{\lvert}{\rvert}
\DeclarePairedDelimiter{\norm}{\lVert}{\rVert}
\newcommand{\ipHsymb}{m}
\DeclarePairedDelimiterX{\ipH}[2]{\ipHsymb\big\lparen}{\big\rparen}{#1, #2} 
\DeclarePairedDelimiterXPP{\normH}[1]{}{\lvert}{\rvert}{}{#1}
\newcommand{\ipHhsymb}{m_{h}}
\DeclarePairedDelimiterX{\ipHh}[2]{\ipHhsymb\big\lparen}{\big\rparen}{#1, #2} 
\DeclarePairedDelimiterXPP{\normHh}[1]{}{\lvert}{\rvert}{_{h}}{#1}
\DeclarePairedDelimiterXPP{\normHhdual}[1]{}{\lVert}{\rVert}{_{\Hh^*}}{#1}
\newcommand{\ipVsymb}{a}
\newcommand{\Vdual}{V^*}
\DeclarePairedDelimiterX{\ipV}[2]{\ipVsymb\big\lparen}{\big\rparen}{#1, #2} 
\DeclarePairedDelimiterXPP{\normV}[1]{}{\lVert}{\rVert}{_V}{#1}
\DeclarePairedDelimiterXPP{\normVa}[1]{}{\lVert}{\rVert}{_{}}{#1}
\DeclarePairedDelimiterXPP{\normVdual}[1]{}{\lVert}{\rVert}{_{ALT,\star}}{#1}
\DeclarePairedDelimiterXPP{\normVadual}[1]{}{\lVert}{\rVert}{_{\star}}{#1}
\DeclarePairedDelimiterX{\ipVh}[2]{\ipVsymb_h\big\lparen}{\big\rparen}{#1, #2} 
\DeclarePairedDelimiterXPP{\seminormVh}[1]{}{\lVert}{\rVert}{_{\sobfhsymb}}{#1}
\DeclarePairedDelimiterXPP{\normVh}[1]{}{\lVert}{\rVert}{_{h}}{#1}
\DeclarePairedDelimiterXPP{\normVhdual}[1]{}{\lVert}{\rVert}{_{\star,h}}{#1}
\newcommand{\sobfsymb}{a} 
\DeclarePairedDelimiterX{\sobf}[2]{\sobfsymb\big\lparen}{\big\rparen}{#1, #2} 
\DeclarePairedDelimiterXPP{\normVsobf}[1]{}{\lVert}{\rVert}{_{\sobfsymb}}{#1}
\newcommand{\sobfhsymb}{\sobfsymb_h} 
\DeclarePairedDelimiterX{\sobfh}[2]{\sobfhsymb\big\lparen}{\big\rparen}{#1, #2} 
\newcommand{\sobfBsymb}{b} 
\DeclarePairedDelimiterX{\sobfB}[2]{\sobfBsymb\big\lparen}{\big\rparen}{#1, #2} 
\newcommand{\sobfBhsymb}{\sobfBsymb_h} 
\DeclarePairedDelimiterX{\sobfBh}[2]{\sobfBhsymb\big\lparen}{\big\rparen}{#1, #2} 
\newcommand{\DeltaipHsymb}{\Delta\ipHsymb} 
\DeclarePairedDelimiterX{\DeltaipH}[2]{\DeltaipHsymb\big\lparen}{\big\rparen}{#1, #2} 
\newcommand{\DeltaipVsymb}{\Delta\ipVsymb} 
\DeclarePairedDelimiterX{\DeltaipV}[2]{\DeltaipVsymb\big\lparen}{\big\rparen}{#1, #2} 
\newcommand{\Deltasobfsymb}{\Delta\sobfsymb} 
\DeclarePairedDelimiterX{\Deltasobf}[2]{\Deltasobfsymb\big\lparen}{\big\rparen}{#1, #2}
\newcommand{\DeltasobfBsymb}{\Delta\sobfBsymb} 
\DeclarePairedDelimiterX{\DeltasobfB}[2]{\DeltasobfBsymb\big\lparen}{\big\rparen}{#1,
	#2}  
\DeclarePairedDelimiterX{\dpV}[2]{\langle}{\rangle_V}{#1,#2}
\DeclarePairedDelimiterXPP{\norminf}[2]{}{\lVert}{\rVert}{_{L^\infty\left( #2 \right)}}{#1}
\DeclarePairedDelimiterXPP{\normsup}[2]{}{\lVert}{\rVert}{_{\infty,#2}}{#1}
\DeclarePairedDelimiterXPP{\col}[1]{}{\lparen}{\rparen}{}{#1}
\newcommand{\lift}[1]{#1^\ell}
\newcommand{\intpo}{\widetilde I_h}
\newcommand{\intpol}{I_h}
\newcommand{\Ritz}{\widetilde R_h}
\newcommand{\Ritzl}{R_h}
\newcommand{\dotr}{\mbox{$\boldsymbol{\cdot}$}}
\newcommand{\defeq}{=}
\newcommand{\ot}{\leftarrow}
\newcommand{\dembd}{\overset{\text{d}}{\hookrightarrow}}
\newcommand{\vu}{\vec{u}}
\newcommand{\vv}{\vec{v}}
\newcommand{\vw}{\vec{w}}
\newcommand{\vf}{\vec{f}}
\newcommand{\vvh}{\vec{v}_h}
\newcommand{\vwh}{\vec{w}_h}
\newcommand{\dt}[1]{\dot #1} 
\newcommand{\dtt}[1]{\ddot #1} 
\newcommand{\ddx}{\d x}
\newcommand{\OOmega}{{\overline{\Omega}}}
\newcommand{\intB}{\int_{\Omega}}
\newcommand{\intS}{\int_{\Gamma}}
\DeclareMathOperator{\Div}{div}
\newcommand{\grad}{\nabla}
\newcommand{\gradS}{\nabla_\Gamma}
\newcommand{\divS}{\Div_\Gamma}
\newcommand{\LaplS}{\Delta_\Gamma}
\newcommand{\wdampB}{\alpha_\Omega}
\newcommand{\wdampS}{\alpha_\Gamma}
\newcommand{\advB}{\mathbf{v}_\Omega}
\newcommand{\advS}{\mathbf{v}_\Gamma}
\newcommand{\diffB}{c_\Omega}
\newcommand{\diffS}{c_\Gamma}
\newcommand{\oscB}{a_\Omega}
\newcommand{\oscS}{a_\Gamma}
\newcommand{\dynS}{\mu}
\newcommand{\sosrcB}{\sosrc_\Omega}
\newcommand{\sosrcS}{\sosrc_\Gamma}
\newcommand{\advBh}{\mathbf{v}_{\Omega_h}}
\newcommand{\advSh}{\mathbf{v}_{\Ga_h}}
\DeclarePairedDelimiterXPP{\ipGen}[3]{}{\big\lparen}{\big\rparen}{_{#3}}{#1\mid #2}
\DeclarePairedDelimiterXPP{\dpGen}[3]{}{\langle}{\rangle}{_{#3}}{#1,#2}
\DeclarePairedDelimiterXPP{\normbbV}[1]{}{\lVert}{\rVert}{_{\bbV}}{#1}
\DeclarePairedDelimiterXPP{\normHbb}[1]{}{\lVert}{\rVert}{_{\bbH^{0}}}{#1}
\DeclarePairedDelimiterXPP{\normHbbmo}[1]{}{\lVert}{\rVert}{_{\bbH^{-1}}}{#1}
\DeclarePairedDelimiterXPP{\normHbbo}[1]{}{\lVert}{\rVert}{_{\bbH^{1}}}{#1}
\newcommand{\HoB}{{H^1(\Omega)}}
\newcommand{\HoS}{H^1(\Gamma)}
\newcommand{\HBS}[1]{H^{#1}(\Omega;\Gamma)}
\DeclarePairedDelimiterXPP{\normHBS}[2]{}{\lVert}{\rVert}{_{\HBS{#2}}}{#1}
\newcommand{\LtB}{{L^2(\Omega)}}
\newcommand{\LtS}{{L^2(\Gamma)}}
\newcommand{\massS}{\mu_\Gamma}
\newcommand{\stiffS}{k_\Gamma}
\newcommand{\ipHasymb}{\overline m}
\DeclarePairedDelimiterX{\ipHa}[2]{\ipHasymb\big\lparen}{\big\rparen}{#1, #2} 
\newcommand{\ipVasymb}{\overline a}
\DeclarePairedDelimiterX{\ipVa}[2]{\ipVasymb\big\lparen}{\big\rparen}{#1, #2} 
\newcommand{\STdomB}{\text{in } \Omega}
\newcommand{\STdomS}{\text{on } \Gamma}
\newcommand{\soiv}{u_0} 
\newcommand{\sois}{u_1} 
\newcommand{\sosrc}{f} 
\newcommand{\so}{\sor} 
\newcommand{\soinv}{\sor^{-1}} 
\newcommand{\sor}{A} 
\newcommand{\soB}{\soBr} 
\newcommand{\soBr}{B} 
\newcommand{\soivh}{\intpo \soiv} 
\newcommand{\soish}{\intpo \sois} 
\newcommand{\sods}{V_h} 
\newcommand{\sosrch}{\intpo \sosrc}
\newcommand{\sorh}{\sor_h} 
\newcommand{\sorhinv}{\sor_h^{-1}} 
\newcommand{\soBrh}{\soBr_h} 
\newcommand{\defect}{d_h}
\newcommand{\los}{\mathcal{L}}
\newcommand{\cqmb}{\rho}
\newcommand{\cqmbh}{\widehat{\rho}}
\newcommand{\ccoerc}{\alpha}
\newcommand{\BC}{boundary conditions}
\newcommand{\dynBC}{dynamic boundary condition}
\newcommand{\sdamp}{strongly damped}
\newcommand{\Sdamp}{Strongly damped}
\newcommand{\eg}{e.g\xperiod}
\newcommand{\ie}{i.e\xperiod}
\newcommand{\cf}{cf\xperiod}
\newcommand{\bbH}{\mathbb{H}}
\newcommand{\bbV}{\mathbb V}
\newcommand{\Hh}{H_h}
\newcommand{\uh}{u_h}
\newcommand{\vh}{v_h}
\newcommand{\wh}{w_h}
\newcommand{\Landau}{\mathcal O}
\newcommand{\eps}{\varepsilon}
\newcommand\bfb{{\mathbf b}}
\newcommand\bfd{{\mathbf d}}
\newcommand\bfe{{\mathbf e}}
\newcommand\bfr{{\mathbf r}}
\newcommand\bfu{{\mathbf u}}
\newcommand\bfv{{\mathbf v}}
\newcommand\bfw{{\mathbf w}}
\newcommand\bfy{{\mathbf y}}
\newcommand\bfA{{\mathbf A}}
\newcommand\bfB{{\mathbf B}}
\newcommand\bfD{{\mathbf D}}
\newcommand\bfE{{\mathbf E}}
\newcommand\bfH{{\mathbf H}}
\newcommand\bfHH{\widehat{\mathbf H}}
\newcommand\bfJ{{\mathbf J}}
\newcommand\bfM{{\mathbf M}}
\newcommand\bfY{{\mathbf Y}}
\newcommand\calA{{\cal A}}
\newcommand\calD{{\cal D}}
\newcommand\calT{{\mathcal T}}
\def\eps{\varepsilon}
\def\AC{\calA}
\newcommand{\andquad}{\qquad \textrm{ and } \qquad}
\def \d {\mathrm{d}}
\renewcommand{\div}{\textnormal{div}\hspace{2pt}}
\newcommand{\Ga}{\Gamma}
\newcommand{\Id}{\textrm{Id}}
\newcommand{\inv}{^{-1}}
\newcommand{\laplace}{\Delta}
\newcommand{\nb}{\nabla}
\newcommand{\Om}{\Omega}
\newcommand{\pa}{\partial}
\newcommand{\R}{\mathbb{R}}
\newcommand{\st}{such that}
\def \t {(t) }
\def \to {\rightarrow}
\newcommand\quadand{\quad\hbox{ and }\quad}
\newcommand\quadfor{\quad\hbox{ for }\quad}
\newcommand\quadfora{\quad\hbox{ for all }\quad}
\newcommand\quadfore{\quad\hbox{ for every }\quad}
\newcommand{\ccdot}{(\cdot,\cdot)}
\newcommand{\ee}{\textnormal{e}}
\newcommand{\diff}{\tfrac{\d}{\d t}}
\newcommand{\half}{{\tfrac{1}{2}}}
\newcommand{\Half}{{\frac{1}{2}}}
\newcommand{\ga}{\gamma}
\newcommand{\nbg}{\nb_{\Ga}}
\newcommand{\nbgh}{\nb_{\Ga_h}}
\def\bbk{\color{black}}
\def\ebk{\color{black}}
\def\bdh{\color{black}}
\def\edh{\color{black}}
\newcommand{\TODO}[1]{%
	\smallskip {\color{red} \hrule \vspace*{1mm}
		\textbf{\underline{To-Do:}} #1 \vspace*{1mm}
		\hrule} \vspace*{1mm}
}
\newcommand\TODO*[1]{{\color{red}\textbf{\underline{To-Do:}} #1 \textbf{\underline{:oD-oT}}}}
\renewcommand{\nu}{\textnormal{n}}
\begin{document}

\title{Finite element error analysis of wave equations with \dynBC s: $L^2$ estimates}
\shorttitle{$L^2$ norm error analysis of wave equations with \dynBC s}

\author{%
	{\sc
		David Hipp\thanks{Email: david.hipp@kit.edu},} \\[2pt]
	Institute for Applied and Numerical Mathematics, Karlsruhe Institute of Technology \\
	Englerstr. 2, 76131 Karlsruhe, Germany
	\\[6pt]
	{\sc and}\\[6pt]
	{\sc Bal\'azs Kov\'acs}\thanks{Email: kovacs@na.uni-tuebingen.de}\\[2pt]
	Mathematisches Institut, Universit\"at T\"{u}bingen,\\
	Auf der Morgenstelle 10, 72076 T\"{u}bingen, Germany
}
\shortauthorlist{D.~Hipp and B.~Kov\'acs}

\maketitle

\begin{abstract}
	{$L^2$ norm error estimates of semi- and full discretisations, using bulk--surface finite elements and Runge--Kutta methods, of wave equations with dynamic boundary conditions are studied. The analysis resides on an abstract formulation and error estimates, via energy techniques, within this abstract setting. Four prototypical linear wave equations with dynamic boundary conditions are analysed which fit into the abstract framework. For problems with velocity terms, or with acoustic boundary conditions we prove surprising results: for such problems the spatial convergence order is shown to be less than two. These can also be observed in the presented numerical experiments.}
	{wave equations, dynamic boundary conditions, abstract error analysis, Ritz map, $L^2$ error estimates, Runge--Kutta methods.}
\end{abstract}

\section{Introduction}

In this paper we study the $L^2$ error of semi- and full discretisations of wave equations with dynamic boundary conditions using bulk--surface finite elements and Gauss--Runge--Kutta methods. 

Dynamic boundary conditions can account for the momentum of the wave on the boundary and, in particular, for tangential wave propagation along the boundary.
As tangential wave propagation is inherently modelled on (piecewise) smooth boundaries, triangulations of the domains are possibly not exact. 
Therefore, finite element discretisations can become non-conforming which makes the error analysis more involved.
This paper considers four prototypical examples for the class of linear wave-type problems with dynamic boundary conditions:
a simple model problem with only second-order terms, problems with advective terms, problems with strong damping, and problems with acoustic boundary conditions. Albeit stating our main results for these four examples, the main part of our error analysis is done in an abstract setting, and can thus be applied to all linear second order wave equations fitting into this setting.

The modelling and analysis of wave equations with dynamic boundary conditions is an intensively researched field.
Initially, dynamic boundary conditions for wave equations appeared in models of vibrating elastic rods or beams with tip masses attached at their free ends, cf.~\cite{AndKS96} and references therein. 
However, a first derivation of dynamic boundary conditions, as considered in this paper, was given in \cite{Gol06} which also comments on their relation to Wentzell boundary conditions.
This seminal work was recently complemented by \cite{FigR15} which presents a systematic approach to derive (dynamic) boundary conditions for conservative systems via a Lagrangian framework.
Moreover, the analysis of such problems is quite developed.
Let us mention here  \cite{Vit13} and \cite{Vit15} where well-posedness of wave equations with (non-linear) dynamic boundary conditions is shown, \cite{GraL14} which studies the regularity of problems with strong boundary damping, and \cite{GalT17} which proves the Carleman inequality.
Another important category are acoustic boundary conditions which arise in models for wave--structure interactions.
First proposed in \cite{BeaR74}, they continue to be a topic of intensive mathematical and physical research, see for example \cite{GalGG04}, \cite{Mug06}, or \cite{FroMV11} for a non-linear version, as well as \cite{VedGC-DYWvH16}.

Despite the long history of wave equations and, more general, partial differential equations (PDEs) with dynamic boundary conditions, the error analysis of their numerical approximations has mainly been developed during the last few years.
\cite{ElliottRanner} was the first paper to address the non-conformity of finite element approximations for bulk--surface PDEs in curved domains.
It proposes and analyses an isoparametric bulk--surface finite element method for an elliptic coupled bulk--surface problem. 
Finite elements (and non-uniform rational B-splines for the approximation of curved domains) for elliptic problems with dynamic boundary conditions have been analysed in \cite{KasCDQ15}. 
Although \cite{Fairweather} already gave error estimates for (conforming) Galerkin methods for linear parabolic problems, it went unnoticed in the dynamic boundary conditions community, possibly due to the fact that the term \emph{dynamic} has not appeared at all in his paper.
We refer to \cite{dynbc} for a more complete numerical analysis of parabolic problems with dynamic boundary conditions, including surface differential operators, semi-linear problems and time integration.
As for hyperbolic equations, 
\cite{LesZ15} studies the numerical approximation of the special case of wave equations in two asymmetric half-spaces divided by a ``wavy'' surface.
The first convergence estimates for general wave equations with dynamic boundary conditions and isoparametric finite element discretisations thereof were shown in \cite{Hip17}.
These energy norm ($H^1$) estimates are derived using the unified theory for (possibly) non-conforming semi-discretisations of wave-type equations presented in \cite{HipHS17}.
Apart from these two works, we are not aware of papers studying the numerical errors for \emph{general} wave equations with dynamic boundary conditions. 

In this paper we present $L^2$ convergence rates for finite element approximations and for full discretisations with Gauss--Runge--Kutta methods of wave equations with dynamic boundary conditions by combining the ideas of \cite{dynbc}  and \cite{HipHS17}, with those of \cite{Mansour_GRK,HP_RK}.
Our approach is based on energy techniques and an abstract formulation of second-order wave equations and their spatial semi-discretisations. 
It can be outlined as follows: 
Via energy estimates, we first reproduce a stability estimate in a weak norm for the continuous problem from  \cite{Hip17}. 
These weak norm estimates entail a $L^2$ norm stability result.
For the $L^2$ error analysis, we therefore derive an analogous stability estimate for the abstract semi-discrete problem in discrete weak norms.
Then, using the abstract Ritz map from \cite{dynbc}, we show an error estimate in terms of errors in the initial value and the semi-discrete defect, and further prove that the latter is bounded by geometric (in the abstract setting conformity, cf.~\cite{Hip17}) and approximation (i.e.~interpolation and Ritz map) error estimates. 
Up to this point, the analysis does not use any specific information on the particular terms of the bilinear forms and, in particular, the boundary conditions. 
Finally, we obtain  $L^2$ convergence rates for each example separately, by studying the different error terms, using properties of the wave equation and (geometric, interpolation and Ritz map) approximation results for the bulk--surface finite element method.

The geometric approximation errors for the terms involving the velocity, do not allow optimal-order convergence rates for all cases. In Table~\ref{tab:overview} we collect the obtained error estimates for the spatial semi-discretisation. We also marked whether these results are illustrated by numerical experiments.
\begin{table}
	\centering
	\setlength{\tabcolsep}{6pt}
	\setlength{\extrarowheight}{3pt}
	\begin{tabular}{llcc}
		Boundary condition  & $L^2$ error & Discussed in & Illustrated \\
		\toprule        %
		purely second-order  & \quad $h^2$  & Theorem~\ref{theorem: semi-discrete error bound: pure second} & \cmark %
		\\
		\midrule
		advection  & \quad $h^{3/2}$  & Theorem~\ref{theorem: semi-discrete error bound: advective}  & \cmark \\
		advection \emph{only} on the surface \qquad & \quad $h^{2}$  & Theorem~\ref{theorem: semi-discrete error bound: advective}  & \cmark \\
		\midrule
		strong damping & \quad $h$ & Theorem~\ref{theorem: semi-discrete error bound: strong damping}  & \xmark \\
		\midrule
		acoustic &  \quad $h^{3/2}$ & Theorem~\ref{theorem: semi-discrete error bound: acoustic boundary conditions}  & \cmark \\
		\bottomrule        
	\end{tabular}
	\caption{Overview of $L^2$ convergence rates for linear finite elements with mesh width $h$ shown in this article.} \label{tab:overview}
\end{table}


We expect that further interesting problems, such as wave equations with new types of dynamic boundary conditions, or with time- and space-dependent coefficients, as well as semi-linear problems can be treated within this setting (or slight modifications of it) using the presented techniques, subject to the error analysis of the mentioned geometric and approximation errors.

Since the matrix--vector formulation of these second order problems coincides with the ODE system for wave problems with standard boundary conditions, the convergence proofs for the full discretisation are straightforward. Some parts have already been covered in the literature, only the $L^2$ norm requires some simple modifications. We give these details, but for those parts which are not new we only give detailed references, following \cite{Mansour_GRK,HP_RK,HPS,rkkato}. We strongly believe, and the previous references also strengthen, that these techniques extend to time discretisations of more general, e.g.~semi- or quasi-linear, problems.


\medskip
\noindent \textit{Outline.} \ In Section~\ref{section:variational formulation for wave eqns} we first introduce the abstract framework, its assumptions, norms and bilinear forms.
\bdh The main motivation of this paper are the four exemplary wave equations with dynamic boundary conditions presented in Section~\ref{section:wave eqn with dynbc}.
\edh There, we also show how their respective variational formulation fits into the abstract framework by giving the suitable Hilbert space \bdh and bilinear forms and we state sufficient conditions on the coefficients for well-posedness. \edh

Section~\ref{section:finite element method} starts with a description of the bulk--surface finite element method and the strategy for dealing with the approximation of a smooth domain with possibly curved boundary. \bdh In Section~\ref{subsection:semi discrete abstract wave equations}, we then define the abstract framework for semi-discretizations of wave equations and state the central error estimate. \edh

Section~\ref{section:main results} presents the main results of this paper.
\bdh
For each example from Section~\ref{section:variational formulation for wave eqns}, we give the bilinear forms discretized by the finite element method and the semi-discrete error estimates in the $L^2$ norm.

To prove these error estimates, we proceed in two steps.
First, Section~\ref{section:abstract error analysis} contains the error analysis in the abstract framework. There we show continuous and semi-discrete stability estimates in a weak norm by using energy techniques. Our main abstract result is an error estimate in terms of several approximation errors. To show this the semi-discrete stability bound is applied to the error equation, then the appearing defect is shown to be bounded by approximation and geometric errors (i.e.~errors in the interpolation and the Ritz map, and errors due to the semi-discrete bilinear forms).
Second, in Section~\ref{section:FEM error analysis}, we prove the error estimates for these approximation errors by using interpolation and geometric error estimates available for the FEM. This section is split into four parts, each devoted to one the exemplary wave equations with dynamic boundary conditions and its FEM discretization, and giving the proof of the corresponding theorem in Section~\ref{section:main results}. 
\edh

In Section~\ref{section:time discreteisations} we turn to time discretisations \bdh and show how stability estimates and convergence results for Gauss--Runge--Kutta methods are shown for the studied abstract wave equations, i.e.~for the four problems with dynamic boundary conditions. \edh The required modifications, compared to the literature, are presented in detail.

In Section~\ref{section:numerics} we present various numerical experiments -- to all four problems -- illustrating our theoretical results.
\bdh
In particular, we show that the proven fractional convergence rates for finite element discretizations of wave equations with advective and acoustic boundary conditions can be observed in numerical experiments, \edh cf.~the last column of Table~\ref{tab:overview}.



\bdh In order to help our readers only interested in the abstract setting and error analysis, or those only in the error analysis of a particular wave equation with dynamic boundary condition, the corresponding parts of the paper are shown in Table~\ref{tab:table of contents}.  \edh

\begin{table}[h]
  \centering
  \setlength{\tabcolsep}{6pt}
  \setlength{\extrarowheight}{3pt}
  \begin{tabular}{r c l}
    \textbf{Wave equations with dynamic b.c.}  & \rotatebox[origin=c]{-90}{\small{Section\; }} & \textbf{Abstract second-order wave equations}\\
    \toprule
    & \ref{section:abstract framework} & abstract problem and well-posedness 
    \\              
    exemplary PDEs with variational form & \ref{section:wave eqn with dynbc} &  \\ 
    the bulk-surface FEM & \ref{subsection:bulk surface finite element method} & \\ 
    & \ref{subsection:semi discrete abstract wave equations} & semi-discrete problem and error estimate \\ 
    convergence results & \ref{section:main results} & \\
    & \ref{section:abstract error analysis} & error analysis in weak norm  \\ 
    analysis of FEM approximations & \ref{section:FEM error analysis} & application of abstract error estimate \\ 
    & \ref{section:time discreteisations} & full discretization including error analysis \\ 
    numerical experiments & \ref{section:numerics} &  \\
  \end{tabular}
  \caption{Categorized table of contents.}
  \label{tab:table of contents}
\end{table}

\section{Analysis of wave equations with dynamic boundary conditions}
\label{section:variational formulation for wave eqns}

In this section, we present an abstract setting for wave equations, similar to the ones in \cite{dynbc} and \cite{HipHS17}, and then consider different examples of wave equations with dynamic boundary conditions fitting into this abstract framework. 

\subsection{Abstract framework}
\label{section:abstract framework}

Let $V$ and $H$ two real Hilbert spaces with norms $\normV{\cdot}$ and  $\normH{\cdot}$, the latter norm induced by the inner product $m(\cdot,\cdot)$ on $H$, such that $V$ is densely and continuously embedded in $H$ (i.e.\ $\normH{u} \leq c \normV{u}$). 
Furthermore, we identify $H$ with its dual $H^*$ which defines the Gelfand triple
\begin{align*}
V \bbk \dembd \ebk H \simeq H^* \bbk \dembd \ebk V^*.
\end{align*}

As a consequence of this identification, the duality $\dpV{\cdot}{\cdot} \colon V^* \times V \to \R$ coincides with $m(\cdot,\cdot)$ on $H \times V$. 

The general abstract wave equation, which covers all examples in this paper, reads: 
Find $u \colon [0,T] \to V$ \st 
\begin{subequations}
	\label{eq:sowt-var}
	\begin{align}
	\dpV{\ddot u(t)}{v} + \sobfB{\dot u(t)}{v} + \sobf{u(t)}{v}  &= \dpV{\sosrc(t)}{v} , \qquad \forall v \in V, \label{eq:sowt-var-evol}\\
	u(0) &= \soiv, \quad \dot u(0) = \sois,   \label{eq:sowt-var-ivs}
	\end{align}
\end{subequations}
where $\sosrc \colon [0,T] \to V^*$ is a given function, \bbk $u_0, u_1 \in V$ are given initial data, \ebk and where $a \colon V \times V \to \R$ and \mbox{$\sobfBsymb \colon V \times V \to \R$} are continuous bilinear forms such that $\sobfBsymb + \cqmb \ipHsymb$ is monotone for some $\cqmb \geq 0$, \ie, 
\begin{equation}
\label{eq:quasi monotonicity of b}
\sobfB{v}{v} +\cqmb \normH{v}^2  \geq 0 \quadfore  v \in V,
\end{equation}
and $\sobfsymb$ is \bbk symmetric, and \ebk coercive with an $\ccoerc > 0$:
\begin{equation}
\label{eq:sobf-coerc}
\sobf{v}{v} \geq \ccoerc \normV{v}^2 \quadfore  v \in V .  
\end{equation}

The above variational equation \eqref{eq:sowt-var} can be written as the evolution equation in $V^*$
\begin{equation}
\label{eq:abstract evolution eq}
\ddot u \t + \soB \dot u \t + \so u \t = \sosrc \t ,
\end{equation}
where $\so, \soB \in \los (V,V^*)$ are induced by the bilinear forms $\sobfsymb$ and $\sobfBsymb$ via
\begin{equation}
\label{eq:operators A and B def}
\dpV{\so w}{v} = \sobf{w}{v}, \andquad \dpV{\soB w}{v} = \sobfB{w}{v}, \qquad w,v \in V. 
\end{equation}

Note that, due to our assumptions, $\so$ is an isomorphism by the Lax--Milgram theorem and $\sobfsymb$ is an inner product on $V$ such that
\begin{align*}
\normVa{v}^2  \defeq \sobf{v}{v}
\end{align*}
defines an equivalent norm, satisfying
\begin{equation*}
\sqrt{\ccoerc} \normV{v} \leq \normVa{v} \leq \norm{\so}_{\Vdual \ot V}^{1/2} \normV{v}, \qquad v \in V.
\end{equation*}
\bbk From now on, on $V$ we will almost exclusively use the $a$ induced norm $\normVa{\cdot}$. \ebk 

The abstract wave equation \eqref{eq:sowt-var} is well-posed in different settings.
The following theorem collects a weak and a strong well-posedness result which are shown using semigroup theory and, for the weak result, the theory of Sobolev towers.
For the proof, we refer to \cite[Theorem~4.3~and~4.13]{Hip17} and note that the strong result is shown in \cite{Sho10}.


\begin{theorem}\label{thm:w-p}  
	Let the above assumptions be fulfilled and let the initial values \mbox{$\soiv \in V$}, $\sois \in H$ and source term satisfy
	$\sosrc \in C^1([0,T];V^*) + C([0,T];H)$.
	Then there exists a unique solution $u$ of \eqref{eq:sowt-var} \st
	\begin{align}
	\label{eq:weak regularity}
	u \in C^1([0,T];H) \cap C([0,T]; V) \quadand \dot u +  \soB u \in C^1([0,T]; V^* ) .
	\end{align}
	
	If furthermore $\soiv,\sois \in V$ \st\ $\so \soiv + \soB \sois \in H$ and
	$\sosrc \in C^1([0,T];H)$ 
	or $\col{\sosrc , \soB \sosrc}  \in C([0,T];V \times H)$,  
	then there exists a unique solution $u$ of \eqref{eq:sowt-var} \st
	\begin{align}
	\label{eq:strong regularity}
	u \in C^2([0,T];H) \cap C^1([0,T];V) \quadand \so u + \soB \dot u  \in C([0,T];H) .
	\end{align}
\end{theorem}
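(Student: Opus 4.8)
The plan is to recast the second-order equation \eqref{eq:abstract evolution eq} as a first-order Cauchy problem on the energy space $\calH = V \times H$ and then invoke semigroup theory. With the state $U = (u, \dot u)$, the problem \eqref{eq:sowt-var} becomes $\dot U = \calA U + (0, \sosrc)$ for the operator
\[
	\calA = \begin{pmatrix} 0 & I \\ -\so & -\soB \end{pmatrix}, \qquad D(\calA) = \big\{ (u,w) \in V \times V : \so u + \soB w \in H \big\} .
\]
I would endow $\calH$ with the energy inner product $\langle (u_1,w_1),(u_2,w_2)\rangle_{\calH} = \sobf{u_1}{u_2} + \ipH{w_1}{w_2}$, whose norm is equivalent to the product norm of $V \times H$ by the coercivity \eqref{eq:sobf-coerc}; this is exactly the point of working with the $\sobfsymb$-induced norm $\normVa{\cdot}$.

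The core is to show, via the Lumer--Phillips theorem, that $\calA$ generates a quasi-contractive $C_0$-semigroup. First I would check that $\calA - \cqmb I$ is dissipative: for $(u,w) \in D(\calA)$ the symmetry of $\sobfsymb$ gives
\[
	\langle \calA(u,w), (u,w)\rangle_{\calH} = \sobf{w}{u} - \sobf{u}{w} - \sobfB{w}{w} = -\sobfB{w}{w} \leq \cqmb \normH{w}^2 ,
\]
the last inequality being the quasi-monotonicity \eqref{eq:quasi monotonicity of b}, together with $\normH{w}^2 \leq \|(u,w)\|_{\calH}^2$. Next I would verify the range condition: for $\lambda > \cqmb$ and $(g_1,g_2) \in \calH$, solving $(\lambda I - \calA)(u,w) = (g_1,g_2)$ reduces, after eliminating $w = \lambda u - g_1$, to the elliptic equation $(\lambda^2 I + \lambda \soB + \so)u = g_2 + \lambda g_1 + \soB g_1$ in $V^*$. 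The bilinear form $c_\lambda(u,v) = \lambda^2 \ipH{u}{v} + \lambda \sobfB{u}{v} + \sobf{u}{v}$ is coercive, since $c_\lambda(u,u) \geq (\lambda^2 - \lambda \cqmb)\normH{u}^2 + \ccoerc \normV{u}^2 \geq \ccoerc\normV{u}^2$ for $\lambda \geq \cqmb$, so Lax--Milgram yields a unique $u \in V$, and then $w = \lambda u - g_1 \in V$ with $(u,w) \in D(\calA)$. Lumer--Phillips then provides the semigroup.

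The strong result follows from the standard inhomogeneous semigroup theory: for $(\soiv,\sois) \in D(\calA)$ --- which is exactly the hypothesis $\soiv,\sois \in V$ with $\so\soiv + \soB\sois \in H$ --- and a forcing $(0,\sosrc)$ that is either a $C^1$ map into $\calH$ (from $\sosrc \in C^1([0,T];H)$) or a continuous map into $D(\calA)$ (from $(\sosrc,\soB\sosrc) \in C([0,T];V \times H)$, which places $(0,\sosrc)$ in $D(\calA)$), one obtains $(u,\dot u) \in C^1([0,T];\calH) \cap C([0,T];D(\calA))$, which unpacks to \eqref{eq:strong regularity}. For the weak result, where only $(\soiv,\sois) \in V \times H = \calH$ and $\sosrc = \sosrc_1 + \sosrc_2$ with $\sosrc_1 \in C^1([0,T];V^*)$, $\sosrc_2 \in C([0,T];H)$, I would pass to the Sobolev tower of $\calA$: the forcing $(0,\sosrc_2)$ already lies in $\calH$ and yields a mild solution in $C([0,T];\calH)$, whereas $(0,\sosrc_1)$ lives in the extrapolation space $\calH_{-1} \cong H \times V^*$, on which $\calA$ extends to a generator $\calA_{-1}$ with $D(\calA_{-1}) = \calH$. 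Since $(\soiv,\sois) \in \calH = D(\calA_{-1})$ and $(0,\sosrc_1) \in C^1([0,T];\calH_{-1})$, the inhomogeneous problem on $\calH_{-1}$ has a solution in $C^1([0,T];\calH_{-1}) \cap C([0,T];\calH)$; reading off its components gives $u \in C^1([0,T];H) \cap C([0,T];V)$, and the remaining regularity $\dot u + \soB u \in C^1([0,T];V^*)$ follows from $\diff(\dot u + \soB u) = \sosrc - \so u \in C([0,T];V^*)$ via \eqref{eq:sowt-var-evol}.

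I expect the main obstacle to be the weak result rather than the strong one: the Sobolev tower step requires correctly identifying the extrapolation space $\calH_{-1}$ together with the extended generator $\calA_{-1}$ and the identity $D(\calA_{-1}) = \calH$, and then confirming that the extrapolated semigroup admits the $V^*$-valued, $C^1$-in-time forcing and returns a genuine weak solution of \eqref{eq:sowt-var} with the stated continuity. A secondary subtlety, to be tracked throughout, is that $\soB$ is only quasi-monotone rather than dissipative, so every dissipativity estimate must carry the shift by $\cqmb$, and the equivalence between the energy norm and the product norm of $V \times H$ must be invoked wherever norms are compared.
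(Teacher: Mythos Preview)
Your proposal is correct and follows essentially the same route the paper indicates: the paper does not give its own proof but refers to \cite{Hip17} and \cite{Sho10}, stating explicitly that the result is ``shown using semigroup theory and, for the weak result, the theory of Sobolev towers'', which is precisely your plan (first-order reformulation on $V\times H$, Lumer--Phillips for the generator, classical inhomogeneous semigroup theory for \eqref{eq:strong regularity}, and extrapolation to $\calH_{-1}$ for \eqref{eq:weak regularity}). Your identification of the two forcing hypotheses with the standard alternatives ``$C^1$ into $\calH$'' versus ``continuous into $D(\calA)$'' is exactly right, and your own caveat about pinning down $\calH_{-1}\cong H\times V^*$ and $D(\calA_{-1})=\calH$ is the only genuinely nontrivial step left.
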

\bbk We assume that the inhomogeneity $f$ and the initial values $u_0, u_1$ satisfy the above conditions. \ebk

\subsection{Wave equations with dynamic boundary conditions}
\label{section:wave eqn with dynbc}

While the error analysis provided in this paper is done for the abstract wave equation \eqref{eq:sowt-var}, we will discuss the numerical solution and the convergence behaviour of four exemplary wave equations with \emph{dynamic boundary conditions} in detail.
In the following, we will introduce these examples and show how the corresponding variational formulations can be written as an abstract wave equation of the form of \eqref{eq:sowt-var}. \bbk For proving that the abstract assumptions of Section~\ref{section:abstract framework} are satisfied by these problems we refer to \cite[Chapter~6]{Hip17}, giving the precise locations therein below. \ebk


Let us briefly introduce some notations.
Let the bulk $\Om \subset\R^d$ ($d=2$ or $3$) be a bounded domain, with (at least) $C^2$ boundary $\Gamma = \partial\Omega$, which is referred to as the surface. Further, let $\nu$ denote the unit outward normal vector to $\Ga$.
Then the surface gradient on $\Ga$, of a function $u : \Ga \to \R$, is denoted by $\nbg u$, and is given by $\nbg u = \nb \bar u -( \nb \bar u \cdot \nu) \nu$, while the Laplace--Beltrami operator on $\Ga$ is given by $\laplace_\Ga u = \nbg \cdot \nbg u$.
Moreover, $\ga u$ denotes the trace of $u$ on $\Ga$, and $\pa_{\nu} u$ denotes the normal derivative of $u$ on $\Ga$.
Finally, temporal derivatives are denoted by $\dot{\phantom{u}} = \d / \d t$.

\subsubsection{Purely second-order dynamic boundary conditions}
\label{section:purely second-order wave equations}

For mathematical models of wave phenomena, the main region of interest is (often) given by the volume of the transmission medium that propagates the waves.
This transmission medium therefore defines the domain for the wave equation while boundary conditions are used to effectively model the behaviour of the wave at the border to its surrounding.
If these effective models capture oscillations of the surrounding structure or waves propagating along its surface, then we call it a dynamic boundary condition.

Here we consider the prototype of such a situation where the boundary condition is another wave equation on which the normal derivative of the bulk function acts as a force.
Depending on the authors, such boundary conditions have been called oscillatory or kinetic, cf.~\cite{GalT17} or  \cite{Vit13}. 
We begin with an example of a wave equation endowed with dynamic boundary conditions which only contains second-order terms modelling local oscillations and propagation of waves along the boundary:
Find the solution \mbox{$u \colon [0,T] \times \OOmega \to \R$ }
\begin{equation} 
\label{wave eqn - pure second-order}
\begin{alignedat}{4}
\ddot u &= \laplace u + f_\Om &\qquad & \textrm{in} \quad \Om , \\
\mu \ddot u &= \beta \laplace_\Ga u - \kappa u  - \pa_{\nu} u + f_\Ga &\qquad & \textrm{on} \quad  \Gamma,
\end{alignedat}
\end{equation}
where the constants $\mu$ and $\kappa$ are positive, $\beta$ is non-negative and  $f_\Om:[0,T] \times\Om \to \R$ and $f_\Ga : [0,T] \times\Ga \to \R$ are given functions. Here we do not consider problems with tip masses, i.e.~where $\kappa= \beta = 0$, see e.g.~\cite{AndKS96}, however we expect them to be treatable with our techniques, although with more technicalities. 

The variational formulation of \eqref{wave eqn - pure second-order} can be cast as the abstract wave equation \eqref{eq:sowt-var} in the Hilbert spaces
\begin{equation}
\label{eq:Hilbert spaces}
\begin{aligned}
V = &\ \{ v \in H^1(\Om) \mid \beta \ga u \in H^1(\Ga) \} \andquad \\
H = &\ L^2(\Om) \times L^2(\Ga) ,
\end{aligned} \quad \textnormal{with the embedding } \quad v \mapsto (v, \ga v) ,
\end{equation}
see \cite[Corollary~6.7]{Hip17}. \bbk For brevity we will abbreviate the pairs $(v, \ga v)$ in $H$ by their first component $v$. \ebk 

The inner products on $H$ and $V$ are given by
\begin{equation}
\label{eq:bilinear forms}
\begin{aligned}
\bbk \ipH{(w,\omega)}{(v,\psi)} = \ebk &\ \int_\Om w v \ \d x + \mu \int_\Ga \bbk \omega \psi \ebk \ \d \sigma \andquad \\
a(w,v) = &\ \int_\Om \nb w \cdot \nb v \ \d x + \beta \int_\Ga \nbg w \cdot \nbg v \ \d \sigma + \kappa \int_\Ga (\ga w) (\ga v) \ \d \sigma ,
\end{aligned}
\end{equation}
where for brevity we write $\nbg v$ instead of $\nbg (\ga v)$. \bbk According to the notational convention above, for embedded pairs, we will often write $m(w,v)=m((w,\ga w),(v,\ga v))$. \ebk We will employ these notations throughout the paper.

Furthermore, since there is no velocity term, we have $\sobfBsymb = 0$ and the right-hand side function $f$ is understood as
\begin{align*}
\dpV{f}{v} = \int_\Om f_\Om v \ \d x + \int_\Ga f_\Ga (\ga v) \ \d \sigma .
\end{align*}

\bbk In \cite[Lemma~6.3 and Section~6.2.2]{Hip17} it is shown that the abstract assumptions from above are satisfied. \ebk 

\subsubsection{Advective dynamic boundary conditions}

Waves propagating through a medium in motion are subject to advection effects, which lead to terms containing first-order time derivatives, cf.~\cite[wave eq. W4]{Cam07}. 
The following example accounts for advective and (weak) damping effects in the bulk and on the surface:
We seek the solution $u \colon [0,T] \times \OOmega \to \R$ of 
\begin{subequations}
	\label{eq:wave eqn - advective}
	\begin{alignat}{4}
	\dtt u = &\ \laplace u - \big ( \wdampB + \advB \cdot \grad \big )  \dt u + \sosrcB\qquad && \STdomB , \\
	\dynS \dtt u = &\ \beta \LaplS u - \big ( \wdampS + \advS \cdot \gradS \big ) \dt u  - \kappa u - \pa_{\nu} u + \sosrcS \qquad && \STdomS . 
	\end{alignat}
\end{subequations}
Here $\dynS,\beta,\kappa > 0$, $ \wdampB,\wdampS \geq 0$ are constants and $\advB \in L^\infty(\Omega; \R^d)$, $\advS \in L^\infty(\Ga; \R^d)$ are given vector fields with $ \Div  \advB  \in L^\infty(\Om)$, $\divS  \advS \in L^\infty(\Ga)$ such that 
\begin{equation}
\label{eq:advective eq coefficient assumptions}
\wdampB -\frac 12  \Div  \advB  \geq 0 \quad \text{ in } \Omega \quadand \wdampS + \frac 12 \big( \advB \cdot \nu - \divS  \advS \big ) \geq 0 \quad \text{ on } \Gamma .
\end{equation}
These last assumptions guarantee that $\sobfBsymb$ is monotone, \cf \cite[Lemma~6.3 and Section~6.2.2]{Hip17}. 
Note that for undamped models, i.e.~if $\wdampB = \wdampS = 0$, the first condition implies that $\advB$ has no sources and the second one that any flow of $\advB$ over $\Gamma$ is compensated by $\advS$.

This problem can also be written in the abstract form \eqref{eq:sowt-var}, using the same Hilbert spaces $H$ and $V$ from \eqref{eq:Hilbert spaces}, and the duality and bilinear form from \eqref{eq:bilinear forms}, while $b$ is now given by
\begin{align}
\label{eq:bilinear forms - advective}
\sobfB{w}{v} & \defeq \intB \big ( \wdampB w  +  \advB \cdot \grad  w \big) v \ \ddx + \intS \big ( \wdampS  \ga w +  \advS \cdot \gradS  w \big) \ga v \ \d\sigma .
\end{align}

\subsubsection{\Sdamp\ dynamic boundary conditions}

Strong damping is of great relevance in engineering due as it increases the robustness of systems against perturbations.
Boundary conditions involving strong damping are particularly interesting for applications \bbk for physical phenomena that exhibit both elasticity and viscosity when undergoing deformation and \ebk for wave--structure interactions, see~\bbk \cite{GraberShomberg}, \ebk \cite{GraL14} and \cite{Nic17}.

We seek $u \colon [0,T] \times \OOmega \to \R$  such that
\begin{equation}
\label{eq:wave eqn - strong damping}
\begin{alignedat}{4}
\ddot u &= d_\Om \laplace \dot u + \laplace u + f_\Om &\qquad & \textrm{in} \quad \Om , \\
\mu \ddot u &= d_\Ga \laplace_\Ga \dot u + \beta \laplace_\Ga u - \kappa u  - \pa_{\nu} u - d_\Om \pa_{\nu} \dot u + f_\Ga &\qquad & \textrm{on} \quad  \Gamma,
\end{alignedat}
\end{equation}
with the same constants as in \eqref{wave eqn - pure second-order}, except again $\beta$ is assumed to be positive, and additionally with the damping coefficients $d_\Om,d_\Ga > 0$.

\smallskip
The weak formulation of this problem again fits into the framework of \eqref{eq:sowt-var}, by using the same spaces as before \eqref{eq:Hilbert spaces}, and using the duality and bilinear form defined in \eqref{eq:bilinear forms}, and 
\begin{equation}
\label{eq:bilinear forms - strong damping}
b(w,v) = d_\Om \int_\Om \nb w \cdot \nb v \ \d x + d_\Ga \int_\Ga \nbg w \cdot \nbg v \ \d \sigma .
\end{equation}
Note that we have to apply Green's formula in the bulk twice and then insert the boundary condition for $\pa_{\nu} u + d_\Om \pa_{\nu} \dot u$ to derive the variational formulation, \bbk cf.~Section~6.1 and Lemma~6.3 in \cite{Hip17}. \ebk

\subsubsection{Acoustic boundary conditions}
\label{section:acoustic boundary conditions}

The wave equation with {acoustic boundary condition} models the propagation of sound waves in a fluid at rest filling a tank $\Omega$, whose walls $\Gamma$, are subject to small oscillations in normal direction and elastic effects in tangential direction. 
The model is described by the following system: Seek the acoustic velocity potential $u \colon [0,T] \times \OOmega \to \R$ and the displacement of $\Gamma$ in normal direction $\delta \colon [0,T]\times \Gamma \to \R$ such that
\begin{subequations}
	\label{eqs:ex-acoustic-pde}
	\begin{alignat}{3} 
	\dtt u   &= - \oscB u + \diffB \Delta u + \sosrcB & \qquad &  \STdomB, \label{eqs:ex-acoustic-pde-1}\\
	\massS \dtt \delta  &= - \stiffS \delta +  \diffS \Delta_\Gamma \delta -  \diffB \dt u + \sosrcS  & \qquad & \STdomS, \label{eqs:ex-acoustic-pde-2}\\
	\dt \delta &= \pa_\nu u & \qquad &  \STdomS \label{eqs:ex-acoustic-pde-3},
	\end{alignat}    
\end{subequations}

where we assume that $\diffS, \diffB, \massS, \oscB, \stiffS > 0$ are constants.
This model was first proposed in \cite{BeaR74} and its analytical properties continue to be a topic of research.
See, \eg, \cite{GalGG04} for a comparison with Wentzell boundary conditions, \cite{Mug06} for a spectral analysis using operator matrices and \cite{FroMV11} for well-posedness analysis of a non-linear version.


For problems with acoustic boundary conditions, we denote functions in the \emph{bulk by Roman letters}, functions on the \emph{surface by Greek letters}, and functions in the \emph{bulk--surface product space} are labelled with $\vec{~}$\ , and usually denoting the vector with the same latter as the bulk function, e.g.\ $\vec w = (w,\omega)$.

The variational formulation of \eqref{eqs:ex-acoustic-pde} is obtained by testing the bulk and surface equations separately by $v \in H^1(\Om)$ and $\psi \in H^1(\Ga)$, using Green's formula on the surface and the bulk, and finally add up the equations, cf.~\cite[Section~6.3]{Hip17}. 
To write this as an abstract wave equation, we use the product spaces 
\bbk 
\begin{equation}
\label{eq:spaces for acoustic bc}
	\begin{aligned}
		V = &\ \HoB \times \HoS \\
		H = &\ \LtB \times \LtS
	\end{aligned}
\end{equation}
\ebk 
and obtain the following problem:
Find $\vec u = (u,\delta) \colon [0,T] \to V$ such that
\begin{equation*}
\dpV{\ddot{\vec u}}{\vec v} + b(\dot{\vec u},\vec v) + a(\vec u,\vec v) = \dpV{\vec f}{\vec v} , \quadfore \vec v \in V ,
\end{equation*}
where the duality and the bilinear forms are given by, for functions $\vw  = \col{ w, \omega}$ and $v  = \col{ v, \psi}$,
\begin{subequations}
	\label{eqs:acoustic-bc-ingredient-var-form}
	\begin{align}
	\ipH{\vw}{\vec v}  \defeq &\ \intB w v \ \ddx + \intS \massS \omega \psi \ \d\sigma,
	, \\
	\label{eq:acoustic bilinear form - b}
	\sobfB{\vw}{ \vec v } \defeq &\  \diffB \intS (\ga w) \psi  - \omega (\ga v) \ \d\sigma,
	, \\
	\label{eq:acoustic bilinear form - a}
	\sobf{\vw}{ \vec v } \defeq &\ \intB \oscB w  v  + \diffB \nabla w \cdot \nabla v \ \ddx  
	+ \intS  \stiffS \omega \psi + \diffS \gradS \omega \cdot  \gradS\psi \ \d\sigma,
	\end{align}
\end{subequations}
and the right hand-side function for acoustic boundary conditions is understood as
\begin{align*}
\dpV{\vf}{\vv} = \int_\Om f_\Om v \ \d x + \int_\Ga f_\Ga  \psi \ \d \sigma .
\end{align*}
Note that $\sobfsymb$ is coercive with $\ccoerc =\min \{ \diffB, \oscS, \diffS, \stiffS \}$ and that $\sobfBsymb$ is skew-symmetric and therefore monotone, \bbk cf.~\cite[Section~6.3]{Hip17}. \ebk

\section{Spatial discretization with the finite element method}
\label{section:finite element method}

For the numerical solution of the above examples we consider a linear finite element method. 
In the following, from \cite{ElliottRanner} and \cite[Section~3.2.1]{dynbc}, we will briefly recall the construction of the discrete domain, the finite element space and the lift operation \bdh which can be used discretize the particular problems of Section~\ref{section:wave eqn with dynbc} in space. Then we will present the abstract framework for spatial discretizations of \eqref{eq:sowt-var} and state the main abstract error estimate.  \edh

\subsection{The bulk-surface finite element method}
\label{subsection:bulk surface finite element method}
The domain $\Om$ is approximated by a triangulation $\calT_h$ with \mbox{maximal mesh width $h$.}
The union of all elements  of $\calT_h$ defines the polyhedral domain $\Om_h$ whose boundary $\Ga_h := \pa \Om_h$ is an interpolation of $\Ga$, i.e.~the vertices of $\Ga_h$ are on $\Ga$.
Analogously, we denote the outer unit normal vector of $\Ga_h$ by $\nu_h$.
We assume that $h$ is sufficiently small to ensure that for every point $x\in\Ga_h$ there is a unique point $p\in\Ga$ such that $x-p$ is orthogonal to the tangent space $T_p\Ga$ of $\Ga$ at $p$.
For convergence results, we consider a quasi-uniform family of such triangulations $\calT_h$ of $\Om_h$.
The finite element space $S_h \nsubseteq H^1(\Om)$ corresponding to $\calT_h$ is spanned by continuous, piecewise linear nodal basis functions on $\Omega_h$, satisfying for each node $(x_k)_{k=1}^N$
$$
\phi_j(x_k) = \delta_{jk}, \quadfor j,k = 1, \dotsc, N .
$$
Then the finite element space is given as
$$
S_h = \textnormal{span}\{\phi_1, \dotsc, \phi_N \} .
$$
We note here that the restrictions of the basis functions to the boundary $\Ga_h$ again form a surface finite element basis over the approximate boundary elements.

Following \cite{Dziuk88}, we define the \emph{lift} of functions $v_h:\Ga_h\to \R$ to $v_h^\ell:\Ga\to\R$ by setting $v_h^\ell(p)=v_h(x)$ for $p\in\Gamma$, where $x\in\Ga_h$ is the \emph{unique} point on $\Ga_h$ with $x-p$  orthogonal to the tangent space $T_p\Ga$.
We further consider the \emph{lift} of functions $v_h:\Om_h\to \R$ to $v_h^\ell:\Om\to\R$ by setting $v_h^\ell(p)=v_h(x)$  if $x\in\Omega_h$ and $p\in \Omega$ are related as \bbk described in detail \ebk in \cite[Section~4]{ElliottRanner}. 
\bbk The mapping $G_h : \Om_h \to \Om$ is defined piecewise by, for an element $E \in \calT_h$,
\begin{equation}
\label{eq:bulk mapping}
	G_h|_E (x) = F_e\big((F_E)^{-1}(x)\big), \qquad \text{for } x \in E,
\end{equation}
where $F_e$ is a $C^1$ map from the reference element onto the smooth element $e \subset \Om$, and $F_E$ is the standard affine liner map between the reference element and $E$.
\ebk 
The \bbk \emph{inverse lift} \ebk $v^{-\ell}:\Ga_h \to \R$ denotes a function whose lift is $v:\Ga \to \R$, and similarly for the bulk as well. 
Note that both definitions of the lift coincide on $\Gamma$. Finally, the lifted finite element space is denoted by $S_h^\ell$, and is given as $S_h^\ell = \{  v_h^\ell \mid v_h \in S_h \}$.

\bdh

\subsection{Semi-discretization of wave equations}
\label{subsection:semi discrete abstract wave equations}

The finite element approximation of a wave equation is based on its variational formulation with integrals over $\Om$ and $\Ga$ replaced by integrals over $\Om_h$ and $\Ga_h$, respectively.
Using the finite element space for a Galerkin ansatz for this variational problem on the polygonal domain $\Om_h$ then yields the semi-discrete problem.

Hence spatial discretisations of wave equations stemming from the finite element method can be written as abstract differential equations in a finite dimensional Hilbert space $V_h \nsubseteq V$ (specified in Section~\ref{section:main results}):
Find the solution $u_h \colon [0,T] \to \sods$ 
such that, for all $\vh \in V_h$,
\begin{subequations} 
	\label{eq:sowt-h-var}
	\begin{align}
	\ipHh{\ddot u_h(t)}{\vh} + \sobfBh{\dot u_h (t)}{\vh} + \sobfh{u_h(t)}{\vh} = &\ \ipHh{\sosrch(t)}{\vh} , \\ 
	u_h(0) = &\ \soivh,  \qquad
	\dot u_h(0) = \ \soish .
	\end{align}  
\end{subequations}
Where for a continuous function $v \in V$, we denote by $\widetilde I_hv \in V_h$ the nodal interpolation of $v$.

First note that the discrete bilinear forms $\ipHhsymb$, $\sobfhsymb$ and $\sobfBhsymb$ inherit the properties from their continuous counterparts. 
In particular, the two norms on $V_h$ 
\begin{equation}
\label{eq:discrete norms}
\begin{aligned}
|v_h|_h^2 = m_h(v_h,v_h)  \quadand
\|v_h\|_h^2 = a_h(v_h,v_h) .
\end{aligned}
\end{equation}
satisfy $|v_h|_h \leq C \| v_h \|_h$ and there exists a constant $\cqmbh \geq 0$ \st\ $\sobfBhsymb + \cqmbh \ipHhsymb$ is monotone. 
 
Since in general $V_h \nsubseteq V$, we can not directly compare the finite element solution $u_h(t)$ at time $0 \leq t \leq T$ with $u(t)$. For the error analysis, we use the  lift operator $\cdot^\ell \colon V_h \to V$ of discrete functions as introduced in Section~\ref{section:finite element method}.  
Due to \cite[Lemma~3.9]{dynbc}, its norm is equivalent to the discrete norm of the function itself:
\begin{equation}
\label{eq:norm equiv}
\|v_h^\ell\| \sim \|v_h\|_h \quadand |v_h^\ell | \sim |v_h |_h
\quad\hbox{uniformly in $h$.} 
\end{equation}
The abstract error analysis presented in the rest of this section applies if the semi-discretization satisfies theses properties.

We define a discrete dual norm on the space $V_h$
\begin{equation}
\label{eq:star and H_h^-1 norm equality}
\normVhdual{d_h} =  \sup_{0\neq v_h \in V_h} \frac{\ipHh{d_h}{\vh}}{\normVh{v_h}}. 
\end{equation}
It is easy to see that, as in continuous case, there exist constants $C,c > 0$ \st 
\begin{align*}
c  \normVhdual{v_h} \leq \normHh{v_h} \leq C \normVh{v_h}
\end{align*}
and that $\normVhdual{\cdot}$ is induced by the inner product $\ipHh{\sorhinv \cdot}{\cdot}$ where the linear operators $\sorh,\soBrh \colon V_h \to V_h$ are given by 
\begin{equation}
\label{eq:A_h defintion}
\sobfh{w_h}{v_h} = \ipHh{\sorh w_h}{ v_h} \quadand     \sobfBh{w_h}{v_h} = \ipHh{\soBrh w_h}{ v_h}.
\end{equation} 

We further introduce the following differences between the continuous and discrete bilinear forms: For any $\wh, \vh \in V_h$, we define
\begin{align*}
\DeltaipH{\wh}{\vh} &= \ipH{\lift \wh}{\lift \vh} - \ipHh{\wh}{\vh}, \\
\DeltasobfB{\wh}{\vh} &= \sobfB{\lift \wh}{\lift \vh} - \sobfBh{\wh}{\vh}.
\end{align*}
Furthermore, we will use the notation 
\begin{align*}
\normVhdual{\DeltaipH{w_h}{\cdot}} = \sup_{v_h \in V_h} \frac{\DeltaipH{w_h}{v_h}}{\normVh{v_h}} .
\end{align*}

For the error analysis, we split the error using the Ritz map $\widetilde R_h u\in V_h$ which for $u\in V$ is defined by
\begin{equation}
\label{Ritz}
	\sobfh{\Ritz u}{\vh} = \sobf{u}{\lift \vh}, \quadfore v_h \in V_h .
\end{equation}
The Ritz map is well-defined for all $u \in V$ due to the abstract assumptions (here, in particular by the coercivity, though $a$ satisfying a G\aa rding inequality suffices with a slight modification), see \cite[Section~3.4]{dynbc}.
Note that, for example, the bilinear form $a$ contains boundary terms which influence $\Ritz$.
Using the notation from \cite[Section~3.4]{dynbc}, we will write $\Ritzl u := (\Ritz u)^\ell \in V_h^\ell$ for the lifted Ritz map. 

In Section~\ref{section:abstract error analysis}, we prove the following error estimate for the lifted solution of the semi-discrete abstract wave equations in the $H$-norm
\begin{subequations}
  \begin{align}
    &\ \normH{u \t - \lift u_h \t} \leq \normH{ u \t - \Ritzl u \t } 
    + C \ee^{\cqmbh T} \bigg ( \eps_0^2 + T \int_0^t d_h(s) \d s \bigg )^{1/2} ,
    \intertext{where $\eps_0$ is the error in the initial values and defined as}
    &\ \begin{aligned}
    \eps_0 = &\ \normH{\intpol \sois - \sois} + \normH{\intpol \soiv - \soiv } + \normH{\Ritzl \sois - \sois} + \normH{\Ritzl \soiv - \soiv} \\
    &\ + \|\Delta b(\Ritz \soiv - \soiv^{-\ell},\cdot)\|_{\star,h} + \|\Delta b(\soivh - \soiv^{-\ell},\cdot)\|_{\star,h} \\
    &\ + c \|B(\Ritzl \soiv - \soiv)\|_{\star} + c \|B(I_h \soiv - \soiv)\|_{\star} ,
    \end{aligned}
    \intertext{the defect $D_h$ can be bounded by}
    &\
    \begin{aligned}
      d_h 
      \leq  C \Big( & \normVadual{\sosrc - \intpol \sosrc} + \normVadual{\Ritzl \ddot u - \ddot u} + \normVadual{\soB (\Ritzl \dot u - \dot u)} \\
      &\ +  \normVhdual{\DeltaipH{\Ritz \ddot u}{\cdot}} + \normVhdual{\DeltasobfB{\Ritz \dot u}{\cdot}}  + \normVhdual{\DeltaipH{\sosrch}{\cdot}}  \Big ) ,
    \end{aligned}
  \end{align}
  and $\intpol v = \lift{(\intpo v)}$ denotes the lifted interpolation of $v$. 
\end{subequations}

In Section~\ref{section:FEM error analysis}, we will apply this error estimate to bulk-surface FEM discretizations of our four examples.
To prove the convergence rate, we then estimate the right-hand side terms which consist of interpolation and geometric errors in problem dependent semi-norms and prove lower bounds for their rate of convergence. 

\edh

\section{$L^2$ error bounds for wave equations with dynamics boundary conditions}
\label{section:main results}



In the rest of the section, we consider the finite element approximation of the examples from Sections~\ref{section:purely second-order wave equations}-\ref{section:acoustic boundary conditions}. 
First, we give concrete definitions for the respective finite element approximation \eqref{eq:sowt-h-var} and then we state the corresponding $L^2$ error estimates with convergence rates for the $L^2$ error of the (lifted) finite element approximation.

\begin{remark}
	The following error estimates require spatial regularity of the solution and its time derivatives.
	Constants in the error estimates will depend on the canonical norm of the space $H^2(0,T;H^2(\Om))$, and similarly for $\Gamma$, i.e.
	\bbk 
	\begin{equation}
	\label{eqs:space-reg}
		\begin{aligned}
			K_\Om(T;u) &= \|u\|_{H^2(0,T;H^2(\Om))} + \|\sosrcB\|_{L^2(0,T;H^2(\Om))} , \\
			K_\Ga(T;\ga u) &= \|\ga u\|_{H^2(0,T;H^2(\Ga))} +  \|\sosrcS\|_{L^2(0,T;H^2(\Ga))} , \\
		\end{aligned}
	\qquad\quad K(T;u,\ga u) = K_\Om(T;u) + K_\Ga(T;\ga u).
	\end{equation}
	\ebk 
	
	
	Note that by standard theory the
	estimate $\max_{0\leq t \leq T} \|u(t)\|_{X} \leq c \|u\|_{H^1(0,T;X)}$ holds, see, e.g.~\cite[Section~5.9.2]{Evans_PDE}.
	Therefore, we have
	\begin{align*}
	\sup_{0 \leq t \leq T} \norm{u(t)}_{H^2(\Om)} + \sup_{0 \leq t \leq T} \norm{\ga u(t)}_{H^2(\Ga)} \leq \bbk K(T;u,\ga u) , \ebk 
	\end{align*}
	\bbk where $K(T;u,\ga u)$ absorbed an embedding constant depending on $T$. \ebk 
	
\end{remark}
\subsection{Purely second-order dynamic boundary conditions}

The finite element approximation of \eqref{wave eqn - pure second-order} is given by \eqref{eq:sowt-h-var} and seeks the numerical solution $u_h$ in the space of piecewise linear finite elements
\begin{equation*}
V_h = S_h.
\end{equation*}
As described above, the semi-discrete problem is derived from the variational formulation by replacing the bilinear forms \eqref{eq:bilinear forms} with their discrete counterparts, i.e.~for $w_h,v_h\in V_h$,
\begin{equation}
\label{eq:bilinear forms - discrete}
\begin{aligned}
m_h(w_h,v_h) =&\ \int_{\Om_h} \! \! \! w_hv_h\, \d x + \mu  \int_{\Ga_h}\! \! \!  (\gamma_h w_h)(\gamma_h v_h)\, \d\sigma_h , \\
a_h(w_h,v_h) =&\ \int_{\Om_h} \! \! \! \nb w_h \cdot \nb v_h \, \d x
+\beta \int_{\Ga_h}\! \! \!  \nbgh w_h \cdot \nbgh v_h\, \d\sigma_h + \kappa \int_{\Ga_h}\! \! \!  (\gamma_h w_h)(\gamma_h v_h)\, \d\sigma_h
,
\end{aligned}
\end{equation}
where $\gamma_h$ denotes the trace operator onto $\Ga_h$, and $\nbgh$ is the discrete tangential gradient (defined in a piecewise sense by $\nbgh w_h = \nb \bar w_h -( \nb \bar w_h \cdot \nu_h) \nu_h$).
Finally, since $\sobfBsymb = 0$, we also have $\sobfBhsymb = 0$.

The lifted finite element approximation converges quadratically in the mesh size $h$ if measured in the a bulk--surface $L^2$ norm. 


\begin{theorem}[Purely second-order wave equations]
	\label{theorem: semi-discrete error bound: pure second}
	Let $u$ be  the solution of the wave equation with purely second-order dynamic boundary conditions \eqref{wave eqn - pure second-order}.
	
	If $\beta>0$ and the solution $u \in C^2(0,T;\HoB) \cap H^2(0,T;H^2(\Om))$ with $\ga u \in C^2(0,T;\HoS) \cap H^2(0,T;H^2(\Ga))$, then there is an $h_0 > 0$ such that for all $h\leq h_0$ the error between the solution $u$ and the linear finite element solution $u_h$ of \eqref{eq:sowt-h-var} (with \eqref{eq:bilinear forms - discrete}) satisfies the optimal second-order error estimate, for $0 \leq t \leq T$,
	\begin{equation}
	\|u_h^\ell\t - u\t\|_{L^2(\Om)} + \|\ga(u_h^\ell\t - u\t)\|_{L^2(\Ga)} \leq C(u,T) h^2.
	\end{equation}
	
	If $\beta=0$ and the solution $u \in C^2(0,T;\HoB) \cap H^2(0,T;H^2(\Om))$ and $\ga u \in C^2(0,T;\LtS)$, then we have the optimal second-order error estimate, for $0 \leq t \leq T$,
	\begin{equation}
	\label{eq:error estimate - pure second - beta = 0}
	\|u_h^\ell\t - u\t\|_{L^2(\Om)} + \|\ga(u_h^\ell\t - u\t)\|_{H^{-1/2}(\Ga)} \leq C(u,T) h^2 .
	\end{equation}
	In both cases, the constant $C(u,T)>0$ depends on $K(T;u,\ga u)$ from \eqref{eqs:space-reg} and grows linearly in the final time $T$, but it is independent of $h$ and $t$.
\end{theorem}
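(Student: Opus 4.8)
The plan is to invoke the abstract $H$-norm error estimate of Section~\ref{subsection:semi discrete abstract wave equations} and then bound each term on its right-hand side by $O(h^2)$ using the interpolation, Ritz map and geometric error estimates available for the linear bulk--surface finite element method. The decisive simplification for this example is the absence of a velocity term: here $\sobfBsymb = 0$ and hence $\soB = 0$, so every summand in $\eps_0$ and in the defect that contains $\sobfBsymb$ or $\soB$ drops out. The initial-value error then reduces to the four nodal interpolation and Ritz map terms for $\soiv$ and $\sois$, while the defect collapses to
\begin{equation*}
  d_h \leqC \normVadual{\sosrc - \intpol \sosrc} + \normVadual{\Ritzl \ddot u - \ddot u} + \normVhdual{\DeltaipH{\Ritz \ddot u}{\cdot}} + \normVhdual{\DeltaipH{\sosrch}{\cdot}}.
\end{equation*}
Moreover, since $\sobfBhsymb = 0$ is monotone with constant $\cqmbh = 0$, the factor $\ee^{\cqmbh T}$ equals $1$, so no exponential growth in $T$ will appear.

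For the leading term $\normH{u\t - \Ritzl u\t}$ and for every Ritz map contribution in $\eps_0$ and $d_h$, I would use the $L^2$-type ($H$-norm) error bound for the Ritz map. Because the $H$-norm is weaker than the energy norm $\normVa{\cdot}$, gaining the second power of $h$ requires an Aubin--Nitsche duality argument: one tests the Ritz error against the solution of the adjoint elliptic problem, invokes its $H^2$ regularity (available for $\beta>0$, where the coupled problem carries the surface Laplacian $\LaplS$), and combines this with the $O(h)$ energy estimate of the Ritz map together with the geometric consistency error caused by $V_h \nsubseteq V$. This yields $\normH{v - \Ritzl v} \leqC h^2 \big( \norm{v}_{H^2(\Om)} + \norm{\ga v}_{H^2(\Ga)} \big)$, see \cite[Section~3.4]{dynbc}, which I would apply to $v \in \{ u, \soiv, \sois, \ddot u\}$. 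The nodal interpolation errors $\normH{\intpol v - v}$ are $O(h^2)$ by the standard bulk--surface interpolation estimate, and the dual-norm source term satisfies $\normVadual{\sosrc - \intpol \sosrc} \leqC \normH{\sosrc - \intpol \sosrc} = O(h^2)$ using $\normVadual{\cdot} \leqC \normH{\cdot}$.

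The remaining two terms are the geometric errors $\normVhdual{\DeltaipH{\Ritz \ddot u}{\cdot}}$ and $\normVhdual{\DeltaipH{\sosrch}{\cdot}}$, measuring the mismatch between the exact mass form $m$ on $\Om,\Ga$ and its discrete analogue $m_h$ on $\Om_h,\Ga_h$. I would expand $\DeltaipH{w_h}{v_h}$ through the change of variables induced by the bulk--surface map $\smtr$ of \eqref{eq:bulk mapping}, isolating the deviation of the Jacobian weights from one; the geometric approximation estimates for the bulk--surface FEM control this deviation by $O(h^2)$ in $L^\infty$, so that $\normVhdual{\DeltaipH{w_h}{\cdot}} \leqC h^2 \normHh{w_h}$ after applying the norm equivalence \eqref{eq:norm equiv}, cf.~\cite{ElliottRanner} and \cite[Section~3.2.1]{dynbc}. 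Collecting all estimates, $\eps_0 = O(h^2)$ and $d_h(s) = O(h^2)$ uniformly in $s$, whence the bracket $(\eps_0^2 + T\int_0^t d_h\,\d s)^{1/2} \leqC (1+T)\,h^2$; together with the leading Ritz term this gives the claimed bulk--surface $L^2$ bound for $\beta>0$, with constant controlled by $K(T;u,\ga u)$ and growing linearly in $T$.

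For the degenerate case $\beta = 0$ the surface part of the energy form reduces to the zeroth-order term $\kappa \int_\Ga (\ga w)(\ga v)$, the adjoint elliptic problem loses its $H^2(\Ga)$ regularity, and $\ga u$ is only assumed in $L^2(\Ga)$; consequently the surface error cannot be controlled in $L^2(\Ga)$ at second order. The remedy is to measure the surface component in the weaker norm $H^{-1/2}(\Ga)$, dualising against $H^{1/2}(\Ga)$, which is still controlled through the bulk by the trace theorem; the same Aubin--Nitsche scheme then delivers $O(h^2)$ for the bulk $L^2(\Om)$ error and for the surface $H^{-1/2}(\Ga)$ error. I expect the principal obstacle throughout to be precisely this duality argument for the Ritz map: because the Ritz map couples the exact form $\sobfsymb$ on $V$ with the discrete form $\sobfhsymb$ on $V_h \nsubseteq V$, the geometric consistency error of the adjoint problem must itself be shown to be $O(h^2)$, which is the technically delicate step, whereas the interpolation and mass-geometry estimates are routine.
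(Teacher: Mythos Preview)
Your overall strategy matches the paper's: apply the abstract $H$-norm estimate, exploit $\sobfBsymb=0$ so all $\soB$- and $\DeltasobfBsymb$-terms vanish (and $\cqmbh=0$), and bound the remaining Ritz, interpolation and $\DeltaipHsymb$ terms by $O(h^2)$. The treatment of $\beta=0$ via the weaker $H^{-1/2}(\Ga)$ surface norm also agrees with what the paper does.

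There is, however, a genuine inaccuracy in your handling of the geometric mass error. The assertion that the Jacobian deviation of $G_h$ from the identity is $O(h^2)$ in $L^\infty$ is wrong for linear bulk elements on a curved domain: on elements with a face on $\Ga_h$ the deviation is only $O(h)$ (the $O(h^2)$ rate is correct for the surface measure, not for the bulk). This is exactly why the paper's Lemma~\ref{lemma:geometric errors} carries the first-order term $Ch\,\|v_h^\ell\|_{L^2(B_h^\ell)}\|w_h^\ell\|_{L^2(B_h^\ell)}$. The missing step is the boundary-layer estimate of Lemma~\ref{lemma:layer est}, $\|v\|_{L^2(B_h^\ell)}\leq Ch^{1/2}\|v\|_{H^1(\Om)}$, applied to \emph{both} arguments; this converts the $O(h)$ into $O(h^2)$, but at the cost of the $V$-norm. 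Thus the correct bound is
\[
\normVhdual{\DeltaipH{w_h}{\cdot}}\leq ch^2\,\normVh{w_h},
\]
not $ch^2\,\normHh{w_h}$ as you write. Your conclusion is unaffected because $w_h=\Ritz\ddot u$ and $w_h=\sosrch$ are uniformly bounded in $\normVh{\cdot}$ (via $\Ritz\in\mathcal L(V,V_h)$ and the interpolation estimate), but the argument as stated would not go through without this correction.
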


\subsection{Wave equations with advective dynamic boundary conditions}
\label{section:semi-discrete advective dynbc}

For the semi-discretisation of the wave equation with \emph{advective dynamic boundary conditions}, we again use the space $V_h=S_h$, together with the semi-discrete bilinear forms $m_h$ and $a_h$ from \eqref{eq:bilinear forms - discrete}. 

For the discrete counter part of $\sobfBsymb$ from \eqref{eq:bilinear forms - advective}, which accounts for the advective effects, we use the inverse lift of the vector fields and define, for $w_h, v_h \in V_h$,
\begin{equation}
\label{eq:bilinear forms - advective discrete}
\begin{aligned}
\sobfBh{w_h}{v_h} =&\ \int_{\Om_h} \big ( \wdampB w_h  +  \advB^{-\ell} \cdot \grad  w_h \big) v_h \d x_h 
+ \int_{\Ga_h} \big ( \wdampS  \ga w_h +  \advS^{-\ell} \cdot \nbgh  w_h \big) \ga_h v_h \d\sigma_h .
\end{aligned}
\end{equation}

\begin{remark}
	\label{remark:discrete vector fields - I}
	To obtain the discrete vector fields for the discrete bilinear form $b_h$ \eqref{eq:bilinear forms - advective discrete}, the inverse lifted vector fields might be difficult to compute in practice. Alternatively, approximative vector fields can be used. If the discrete vector fields $\advBh$ and $\advSh$ are sufficiently close (in terms of $h$) to $\advB$ and $\advS$, then the following convergence estimate remains valid.
	We will return to this later on (see Remark~\ref{remark:interpolated-vector-fields}), when the necessary tools are introduced.
\end{remark}


For the finite element approximation of the  wave equation with advective dynamic boundary conditions, we prove the following convergence results.

\begin{theorem}[Advective boundary conditions]
	\label{theorem: semi-discrete error bound: advective}
	Let the solution of the wave equation with advective dynamic boundary conditions  \eqref{eq:wave eqn - advective} have the regularity $u \in C^2(0,T;\HoB) \cap H^2(0,T;H^2(\Om))$ and $\ga u  \in C^2(0,T;\HoS) \cap H^2(0,T;H^2(\Ga))$, then there is an $h_0 > 0$ such that for all $h\leq h_0$ the error between the solution $u$ and the linear finite element semi-discretisation $u_h$ of \eqref{eq:sowt-h-var} (with \eqref{eq:bilinear forms - advective discrete}) satisfies the error estimate of order $3/2$, for $0 \leq t \leq T$,
	\begin{equation}
	\|u_h^\ell\t - u\t\|_{L^2(\Om)} + \|\ga(u_h^\ell\t - u\t)\|_{L^2(\Ga)} \leq C(u,T) h^{3/2},
	\end{equation}
	and if $\advB=0$ (but $\advS$ not necessarily) we have the optimal-order error estimates
	\begin{equation}
	\|u_h^\ell\t - u\t\|_{L^2(\Om)} + \|\ga(u_h^\ell\t - u\t)\|_{L^2(\Ga)} \leq C(u,T) h^2 .
	\end{equation}
	In both cases, the constant $C(u,T)>0$ depends on $K(T;u,\ga u)$ from \eqref{eqs:space-reg} and grows linearly in the final time $T$, but it is independent of $h$ and $t$.
\end{theorem}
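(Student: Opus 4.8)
The plan is to instantiate the abstract $H$-norm error estimate stated at the end of Section~\ref{subsection:semi discrete abstract wave equations} for the advective bilinear forms \eqref{eq:bilinear forms - advective} and \eqref{eq:bilinear forms - advective discrete}, and then to bound every term on its right-hand side. Since $H=\LtB\times\LtS$ with the embedding $v\mapsto(v,\ga v)$, a bound on $\normH{u\t-\lift u_h\t}$ is exactly the asserted $L^2(\Om)\times L^2(\Ga)$ error. The Ritz term $\normH{u\t-\Ritzl u\t}$, the interpolation and Ritz contributions to $\eps_0$, and the purely second-order defect terms $\normVadual{\sosrc-\intpol\sosrc}$, $\normVadual{\Ritzl\ddot u-\ddot u}$, $\normVhdual{\DeltaipH{\Ritz\ddot u}{\cdot}}$ and $\normVhdual{\DeltaipH{\sosrch}{\cdot}}$ are all $O(h^2)$: they reduce to the mass-form geometric errors and the interpolation/Ritz-map approximation estimates for the bulk--surface FEM (from \cite{ElliottRanner} and \cite[Section~3.4]{dynbc}), exactly as in the proof of Theorem~\ref{theorem: semi-discrete error bound: pure second}, using the assumed $H^2(0,T;H^2)$ regularity. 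Thus everything except the two genuinely advective contributions is optimal.

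The two terms carrying the form $b$ are $\normVadual{\soB(\Ritzl\dot u-\dot u)}$ and $\normVhdual{\DeltasobfB{\Ritz\dot u}{\cdot}}$ (together with their analogues inside $\eps_0$), and these are where half an order is lost. For the first, write $e=\Ritzl\dot u-\dot u$ and expand $\normVadual{\soB e}=\sup_{v}\sobfB{e}{v}/\normVa{v}$. The weak-damping parts are controlled by $\normH{e}\normVa{v}=O(h^2)$; for the bulk advection I would integrate $\intB\advB\cdot\grad e\,v$ by parts to move the gradient onto $v$, leaving volume terms again bounded by $\normH{e}\normVa{v}=O(h^2)$, but producing the boundary contribution $\intS(\advB\cdot\nu)(\ga e)(\ga v)$. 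The decisive step is the interpolated trace estimate
\begin{equation*}
\|\ga(\Ritzl\dot u-\dot u)\|_{\LtS}\lesssim \|e\|_{H^{1/2}(\Om)}\lesssim \|e\|_{\LtB}^{1/2}\,\|e\|_{\HoB}^{1/2}\lesssim (h^2)^{1/2}\,(h)^{1/2}=h^{3/2},
\end{equation*}
obtained by interpolating the optimal $O(h^2)$ bulk $L^2$ and $O(h)$ bulk $H^1$ Ritz errors. This is precisely the origin of the rate $h^{3/2}$.

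For the geometric error $\normVhdual{\DeltasobfB{\Ritz\dot u}{\cdot}}$ I would transform the continuous integral in \eqref{eq:bilinear forms - advective} back to $\Om_h$ through the map $\smtr$ of \eqref{eq:bulk mapping} and compare with \eqref{eq:bilinear forms - advective discrete}. The discrepancy is concentrated on the boundary elements, where the lift deviates from the identity; there the relevant factor $(\Dsmtr)^{-\tp}\areah-\Id$ is $O(h)$ rather than $O(h^2)$, while the boundary strip has measure $O(h)$. Combining the $O(h)$ factor with the bounded $\grad\Ritz\dot u$ and the layer bound $\|v_h\|_{L^2(\text{strip})}\lesssim h^{1/2}\normVa{v_h}$ for the test function yields an $h^{3/2}$ bound here as well. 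The surface advection part, by contrast, lives on the closed surface $\Ga$, so no boundary of $\Ga$ arises, and its geometric factors are $O(h^2)$, giving the optimal rate.

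The two $h^{3/2}$ mechanisms are exactly the ones that vanish when $\advB=0$: the integration-by-parts boundary term $\intS(\advB\cdot\nu)(\ga e)(\ga v)$ disappears, and the bulk geometric error drops out, leaving only $O(h^2)$ surface contributions (the surface advection $\intS\advS\cdot\gradS e\,\ga v$ integrates by parts on $\Ga$ without boundary terms, and its geometric error is $O(h^2)$). Collecting the bounds gives $\eps_0\lesssim h^{3/2}$ and a defect $d_h\lesssim h^{3/2}$, whence the bracket in the abstract estimate is $O(h^{3/2})$, the prefactor and the factor $T$ under the square root producing the asserted linear growth in $T$ and dependence on $K(T;u,\ga u)$; when $\advB=0$ all estimates improve to $h^2$. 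The main obstacle is establishing the \emph{sharp} $h^{3/2}$ behaviour of these two advective terms — the interpolated trace estimate and the boundary-layer geometric estimate — and verifying that both are genuinely driven by $\advB$ and hence recover order $h^2$ under $\advB=0$.
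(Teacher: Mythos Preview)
Your overall strategy coincides with the paper's: instantiate the abstract $H$-norm estimate, reuse the $O(h^2)$ bounds from the purely second-order case for all the mass-form and interpolation terms, and then analyse the two advective contributions $\normVadual{\soB(\Ritzl\dot u-\dot u)}$ and $\normVhdual{\DeltasobfB{\Ritz\dot u}{\cdot}}$ separately. Your treatment of the geometric term $\DeltasobfBsymb$ via the boundary-layer estimate is exactly what the paper does (their Lemma~\ref{lemma:geometric-error-advective} combined with Lemma~\ref{lemma:layer est}) and correctly yields the $h^{3/2}$ rate, improving to $h^2$ when $\advB=0$.

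Where you diverge from the paper is in the term $\normVadual{\soB(\Ritzl\dot u-\dot u)}$, and here you are being needlessly pessimistic. You estimate the boundary trace $\|\ga(\Ritzl\dot u-\dot u)\|_{\LtS}$ by the multiplicative trace inequality using only \emph{bulk} Ritz errors, landing at $h^{3/2}$. But in the advective setting $\beta>0$, so Lemma~\ref{lemma: Ritz error est} already gives $\|\ga(\Ritzl\dot u-\dot u)\|_{\LtS}\leq Ch^2$ directly. Equivalently, after integration by parts one has $\sobfB{w}{v}\leq c\,\normH{w}\,\normVa{v}$ (the $H$-norm \emph{includes} the $\LtS$-norm of the trace), and hence $\normVadual{\soB(\Ritzl\dot u-\dot u)}\leq c\,\normH{\Ritzl\dot u-\dot u}\leq ch^2$. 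So this term is not a source of order reduction at all; the \emph{only} mechanism producing $h^{3/2}$ is the boundary-layer geometric error in $\DeltasobfBsymb$. Your proof still goes through (an $h^{3/2}$ bound is a valid bound), but your identification of ``precisely the origin of the rate $h^{3/2}$'' is off, and your argument for the $\advB=0$ case relies on the disappearance of a term that was already $O(h^2)$.
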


\subsection{Wave equations with \sdamp\ dynamic boundary conditions}

For the semi-discretisation of the wave equation with \emph{\sdamp\ dynamic boundary conditions}, we again use the space $V_h=S_h$, together with the semi-discrete bilinear forms $m_h$ and $a_h$ from \eqref{eq:bilinear forms - discrete}.

The discrete bilinear form with strong damping corresponding to \eqref{eq:bilinear forms - strong damping} reads, for $w_h, v_h \in V_h$,
\begin{equation}
\label{eq:bilinear forms - strong damping discrete}
b_h(w_h,v_h) = d_\Om \int_{\Om_h} \nb w_h \cdot \nb v_h \ \d x_h + d_\Ga \int_{\Ga_h} \nbgh w_h \cdot \nbgh v_h \ \d \sigma_h .
\end{equation}

\begin{theorem}[Strong damping]
	\label{theorem: semi-discrete error bound: strong damping}
	Let the solution of the wave equation with \sdamp\ dynamic boundary conditions \eqref{eq:wave eqn - strong damping} have the regularity  $u \in C^2(0,T;\HoB) \cap H^2(0,T;H^2(\Om))$ and $\ga u  \in C^2(0,T;\HoS) \cap H^2(0,T;H^2(\Ga))$, then there is an $h_0 > 0$ such that for all $h\leq h_0$ the error between the solution $u$ and the linear finite element semi-discretisation $u_h$ of \eqref{eq:sowt-h-var} (with \eqref{eq:bilinear forms - strong damping discrete}) satisfies the first-order error estimate, for $0 \leq t \leq T$,
	\begin{equation}
	\|u_h^\ell\t - u\t\|_{L^2(\Om)} + \|\ga(u_h^\ell\t - u\t)\|_{L^2(\Ga)} \leq C(u) h ,
	\end{equation}
	and if $\beta = d_\Ga / d_\Om$, i.e.~the ratio of the diffusive and damping coefficients in the bilinear forms $a$ and $b$ coincide, then we have the optimal-order error estimate
	\begin{equation}
	\|u_h^\ell\t - u\t\|_{L^2(\Om)} + \|\ga(u_h^\ell\t - u\t)\|_{L^2(\Ga)} \leq C(u) h^2 .
	\end{equation} 
	In both cases, the constant $C(u,T)>0$ depends on $K(T;u,\ga u)$ from \eqref{eqs:space-reg} and grows linearly in the final time $T$, but it is independent of $h$ and $t$.
\end{theorem}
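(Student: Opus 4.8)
The plan is to specialise the abstract error estimate stated at the end of Section~\ref{subsection:semi discrete abstract wave equations} to the strongly damped forms \eqref{eq:bilinear forms} and \eqref{eq:bilinear forms - strong damping}, and to estimate every term on its right-hand side by the interpolation, Ritz-map and geometric error bounds for the bulk--surface FEM from \cite{ElliottRanner} and \cite[Section~3.4]{dynbc}. First I would record the rates I use repeatedly: the lifted Ritz and interpolation errors satisfy $\normH{\Ritzl w - w} + \normH{\intpol w - w} \leq C h^2$ and $\normVa{\Ritzl w - w} + \normVa{\intpol w - w} \leq C h$, and the geometric perturbations $\DeltaipH{\cdot}{\cdot}$, $\Deltasobf{\cdot}{\cdot}$, $\DeltasobfB{\cdot}{\cdot}$ are all $\bigo(h^2)$ in the discrete dual norm $\normVhdual{\cdot}$, with constants controlled by $K(T;u,\ga u)$ from \eqref{eqs:space-reg}. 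Feeding these in, the leading term $\normH{u - \Ritzl u}$ is $\bigo(h^2)$, and of the six defect contributions the source term $\normVadual{\sosrc - \intpol \sosrc}$, the Ritz term $\normVadual{\Ritzl \ddot u - \ddot u}$ and the three geometric terms are all $\bigo(h^2)$; likewise every summand of $\eps_0$ is $\bigo(h^2)$ except those of the form $\normVadual{\soB(\cdot)}$.

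Thus the whole first-order result hinges on the single term $\normVadual{\soB(\Ritzl \dot u - \dot u)}$. Writing out $\soB$ from \eqref{eq:bilinear forms - strong damping}, for any $v \in V$ one has
\begin{equation*}
\sobfB{\Ritzl \dot u - \dot u}{v} = d_\Om \intB \nb(\Ritzl \dot u - \dot u) \cdot \nb v \ \d x + d_\Ga \intS \nbg(\Ritzl \dot u - \dot u) \cdot \nbg v \ \d \sigma ,
\end{equation*}
and the Cauchy--Schwarz inequality together with the norm equivalences bounds this by $C \normVa{\Ritzl \dot u - \dot u}\, \normVa{v} = \bigo(h)\, \normVa{v}$. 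Hence $\normVadual{\soB(\Ritzl \dot u - \dot u)} = \bigo(h)$, which is exactly the $H^1$-rate of the Ritz error and the only term below $h^2$; the same estimate applies to the $\normVadual{\soB(\cdot)}$-summands of $\eps_0$. Plugging $\eps_0 = \bigo(h)$ and a defect of order $h$ into the abstract estimate gives the stated bound $C(u)h$, with $C(u)$ depending on $K(T;u,\ga u)$ and growing linearly in $T$ through the factor $T\int_0^t d_h$.

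For the optimal-order case I would exploit the algebraic identity valid precisely when $\beta = d_\Ga / d_\Om$: then $d_\Om \beta = d_\Ga$, so
\begin{equation*}
\sobfB{w}{v} = d_\Om \Big( \sobf{w}{v} - \kappa \intS (\ga w)(\ga v) \ \d \sigma \Big) ,
\end{equation*}
that is, $\soB$ equals $d_\Om \so$ up to a boundary mass term. The point is that one must \emph{not} route through the continuous dual norm here, but instead return to the definition of the defect and keep the test function $\vh \in V_h$ discrete. By the Galerkin property \eqref{Ritz} of the Ritz map, the $\so$-part collapses to a geometric error,
\begin{equation*}
\sobf{\Ritzl \dot u - \dot u}{\lift \vh} = \sobf{\Ritzl \dot u}{\lift \vh} - \sobfh{\Ritz \dot u}{\vh} = \Deltasobf{\Ritz \dot u}{\vh} = \bigo(h^2) ,
\end{equation*}
while the remaining term $d_\Om \kappa \intS \ga(\Ritzl \dot u - \dot u)\, \ga \lift\vh \ \d\sigma$ pairs an $L^2$ Ritz error against a trace and is $\bigo(h^2)$ by the trace inequality and $\normH{\Ritzl \dot u - \dot u} = \bigo(h^2)$. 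The critical contribution is thereby promoted to $\bigo(h^2)$; applying the same manipulation to the $\soB$-summands of $\eps_0$ yields $\eps_0 = \bigo(h^2)$, and the abstract estimate now delivers $C(u)h^2$.

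I expect the main obstacle to be precisely the handling of $\normVadual{\soB(\Ritzl \dot u - \dot u)}$. The strong-damping operator acts through the \emph{gradient} of the Ritz error, which is intrinsically $\bigo(h)$, and no Aubin--Nitsche duality can improve this in the continuous dual norm because the supremum runs over arbitrary $v \in V$ carrying no extra regularity; this is a genuine first-order barrier. Recognising that it can be circumvented only by re-opening the defect and invoking discrete Galerkin orthogonality --- available solely under the coefficient-matching condition $\beta = d_\Ga / d_\Om$ --- is the key structural step, after which the argument reduces to the standard bookkeeping of interpolation, Ritz and geometric estimates.
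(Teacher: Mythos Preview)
Your part (a) follows the paper's approach and is fine, with one overclaim: you assert that $\DeltasobfB{\cdot}{\cdot}$ is $\bigo(h^2)$ in the discrete dual norm, but for the strong-damping form the boundary-layer term in Lemma~\ref{lemma:geometric-error-strong-damping} carries $\|\nabla v_h^\ell\|_{L^2(B_h^\ell)}$, to which the layer estimate Lemma~\ref{lemma:layer est} does \emph{not} apply (it would require $\nabla v_h^\ell\in H^1$). Only the factor with $w_h=\Ritz\dot u$ gains $h^{1/2}$, so $\normVhdual{\DeltasobfB{\Ritz\dot u}{\cdot}}=\bigo(h^{3/2})$. This is harmless for the $\bigo(h)$ result.

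The same issue breaks your part (b). You claim $\Deltasobf{\Ritz\dot u}{v_h}=\bigo(h^2)$, but by the identical reasoning it is only $\bigo(h^{3/2})$; together with the $\DeltasobfB{\Ritz\dot u}{\cdot}$ term still present in the defect bound (also $\bigo(h^{3/2})$, not $\bigo(h^2)$), your route gives only $\bigo(h^{3/2})$. The paper closes this gap by treating the \emph{full} $b$-pair $\sobfBh{\Ritz\dot u}{v_h}-\sobfB{\dot u}{\lift v_h}$ in \eqref{eq:defect equation with pairs} before any add-and-subtract splitting. Since the identity you wrote also holds discretely, $b_h=d_\Om a_h-d_\Om\kappa\int_{\Ga_h}\gamma_h(\cdot)\gamma_h(\cdot)$, the $a$-contribution becomes $d_\Om\big[\sobfh{\Ritz\dot u}{v_h}-\sobf{\dot u}{\lift v_h}\big]$, which vanishes \emph{exactly} by the Ritz definition \eqref{Ritz}; no $\Deltasobfsymb$ appears at all, and only the $\bigo(h^2)$ boundary-mass pair remains. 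Equivalently, had you combined your $d_\Om\Deltasobf{\Ritz\dot u}{v_h}$ with the $-\DeltasobfB{\Ritz\dot u}{v_h}$ still sitting in the defect, they would cancel up to that same boundary-mass term --- but you estimate them separately, and individually neither is $\bigo(h^2)$.
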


\begin{remark}
	The strongly damped wave equation with \sdamp\ dynamic boundary conditions is governed by an analytic semigroup, cf.~\cite{GraL14}.  
	Since we treat \eqref{eq:wave eqn - strong damping} as a hyperbolic problem our estimate is probably suboptimal. We expect that the error of the finite element solution converges with $\Landau(h^2)$ as shown in \cite{LarTW91} for the strongly damped wave equation with homogeneous Dirichlet boundary conditions.         %
\end{remark}

\subsection{Wave equations with acoustic boundary conditions}
\label{section:semi-discrete acoustic b.c.}

The solution $\vu = (u,\delta)$ of \eqref{eqs:ex-acoustic-pde} consists of two functions, one in the bulk and one on the surface.
Therefore we introduce the boundary 
element space on the surface $\Ga_h$:
\begin{equation*}
S_h^\Ga = \ga_h S_h = \big\{ \ga_h v_h \mid v_h \in S_h \big\} ,
\end{equation*}
to approximate the surface function $\delta\colon [0,T]\times \Ga \to \R$.
Hence we seek to approximate $\vu (t)$, which belongs to $V=H^1(\Om) \times H^1(\Ga)$, in the bulk--surface finite element space
\begin{equation}
\label{eq:acoustic FEM space}
V_h = S_h \times S_h^\Ga .
\end{equation}
The finite element approximation of the wave equation with acoustic boundary conditions \eqref{eqs:ex-acoustic-pde} now reads:
Find $\vec u_h=(u_h,\delta_h) \colon [0,T] \to V_h$ such that, for all $\vec v_h \in V_h$,
\begin{equation}
\label{eq:wave eqn - discrete acoustic}
\begin{aligned}
m_h(\ddot{\vec u}_h(t),\vec v_h) + b_h(\dot{\vec u}_h(t),\vec v_h) + a_h(\vec u_h(t),\vec v_h) &= m_h(\widetilde I_h \vec f(t),\vec v_h) , 
\\
\vec u_h(0) &= \intpo \vec u_0 ,  \qquad 
\dot{\vec u}_h(0) = \intpo \vec u_1 ,
\end{aligned}
\end{equation}
where the interpolation of any $\vv = (v,\psi)$ is understood componentwise \mbox{as $\intpo \vv = (\intpo^\Om v, \intpo^\Ga \psi) \in V_h$}, \bbk where $\intpo^\Om$ and $\intpo^\Ga$ are the standard interpolation operators on $\Om$ and $\Ga$, respectively. \ebk 

The discrete counterparts of the continuous bilinear forms \eqref{eqs:acoustic-bc-ingredient-var-form} are given by, for $\vec w_h = (w_h,\omega_h), \vec v_h = (v_h,\psi_h) \in \vec V_h$,
\begin{subequations}
	\label{eqs:acoustic-bc-ingredient-var-form discrete}
	\begin{align}
	\label{eq:acoustic discrete bilinear form - m}
	\ipHh{\vw_h}{\vec v_h}  &\defeq \int_{\Om_h} w_h v_h \ \d x_h + \int_{\Ga_h} \massS \omega_h \psi_h \ \d\sigma_h \\
	\sobfBh{\vw}{ \vec v_h } &\defeq  \diffB \int_{\Ga_h} (\ga_h w_h) \psi_h  - \omega_h (\ga_h v_h) \ \d\sigma_h , \\
	\label{eq:acoustic discrete bilinear form - a}
	\sobfh{\vw}{ \vec v_h } &\defeq \int_{\Om_h} \oscB w_h  v_h  + \diffB \nabla w_h \cdot \nabla v_h \ \d x_h 
	+ \int_{\Ga_h}  \stiffS \omega_h \psi_h + \diffS \nbgh \omega_h \cdot  \nbgh \psi_h \ \d\sigma_h , \\
	\ipHh{\intpo \vf}{\vec v_h} & \defeq \int_{\Om_h}  \intpo^\Om \sosrcB v_h \ \d x_h + \int_{\Ga_h} \intpo^\Ga \sosrcS \psi_h \ \d\sigma_h .
	\end{align}
\end{subequations}


As a consequence of the bulk--surface coupling our results show that the lifted finite element approximation only converges with $\Landau(h^{3/2})$ instead of  $\Landau(h^{2})$ as one would generally expect.

\begin{theorem}[Acoustic boundary conditions]
	\label{theorem: semi-discrete error bound: acoustic boundary conditions}
	Let the solution of the wave equation with acoustic boundary conditions \eqref{eqs:ex-acoustic-pde} have the regularity $u \in C^2(0,T;\HoB) \cap H^2(0,T;H^2(\Om))$ and $\delta \in C^2(0,T;\HoS) \cap H^2(0,T;H^2(\Ga))$, then there is an $h_0 > 0$ such that for all $h\leq h_0$ the error between the solution $u$ and the linear finite element semi-discretisation $\vec{u}_h=(u_h,\delta_h)$ of \eqref{eq:wave eqn - discrete acoustic} (with \eqref{eqs:acoustic-bc-ingredient-var-form discrete}) satisfies the error estimate of order $3/2$, for $0 \leq t \leq T$,
	\begin{equation}
	\label{eq:error estimate: acoustic bc}
	\|u_h^\ell\t - u\t\|_{L^2(\Om)} + \|\delta_h^\ell\t - \delta\t\|_{L^2(\Om)} \leq C(u,\delta,T) h^{3/2} ,
	\end{equation}
	where the constant $C(u,\delta,T)>0$ depends on $K(T;u,\delta)$ from \eqref{eqs:space-reg}, and grows linearly in the final time $T$, but it is independent of $h$ and $t$.
\end{theorem}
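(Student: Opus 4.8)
The plan is to apply the abstract error estimate of Section~\ref{subsection:semi discrete abstract wave equations} with $V = \HoB \times \HoS$ and $H = \LtB \times \LtS$, and then to estimate every term on its right-hand side. Since $\sobfBsymb$ is skew-symmetric we have $\sobfB{\vec v}{\vec v} = 0$, so we may take $\cqmbh = 0$ and the prefactor $\ee^{\cqmbh T}$ equals one. The symmetric form $\sobfsymb$ in \eqref{eq:acoustic bilinear form - a} splits into decoupled bulk and surface Helmholtz-type forms, hence the Ritz map $\Ritz$ acts componentwise, its two components being the Ritz projections for the bulk and the surface problem, respectively. Consequently the bulk Ritz error $e_u$ and the surface Ritz error $e_\delta$ separately inherit the standard bulk- and surface-FEM estimates of \cite{dynbc,ElliottRanner}: each is $\Landau(h^2)$ in the respective $L^2$ norm and $\Landau(h)$ in the respective $H^1$ norm.

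With this componentwise structure, most contributions are of optimal order $\Landau(h^2)$. The leading term $\normH{u(t) - \Ritzl u(t)}$, the interpolation and Ritz errors appearing in $\eps_0$, and the dual-norm terms $\normVadual{\sosrc - \intpol \sosrc}$ and $\normVadual{\Ritzl \ddot u - \ddot u}$ follow directly from these approximation results. The geometric errors $\normVhdual{\DeltaipH{\Ritz \ddot u}{\cdot}}$, $\normVhdual{\DeltasobfB{\Ritz \dot u}{\cdot}}$ and $\normVhdual{\DeltaipH{\sosrch}{\cdot}}$ are likewise $\Landau(h^2)$: since the bulk and surface lifts coincide on $\Gamma$, the coupling integrals in $\DeltasobfBsymb$ reduce to surface integrals weighted by the ratio of the discrete and exact surface measures, and this ratio deviates from one by $\Landau(h^2)$; the same measure-deviation argument controls the mass-form errors $\DeltaipHsymb$.

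The only term not reaching optimal order is $\normVadual{\soB(\Ritzl \dot u - \dot u)}$, together with its counterpart $\normVadual{\soB(\Ritzl \soiv - \soiv)}$ in $\eps_0$. Writing $\vec e = \Ritzl \dot{\vec u} - \dot{\vec u} = (e_u,e_\delta)$, the definitions \eqref{eq:operators A and B def} and \eqref{eq:acoustic bilinear form - b} give
\begin{equation*}
\dpV{\soB \vec e}{\vec v} = \sobfB{\vec e}{\vec v} = \diffB \intS (\ga e_u)\,\psi - e_\delta\,(\ga v) \ \d\sigma .
\end{equation*}
The surface-to-bulk term is of optimal order: by the surface Ritz estimate and the trace inequality, $\abs{\intS e_\delta\,(\ga v)\ \d\sigma} \leq \norm{e_\delta}_{\LtS}\norm{\ga v}_{\LtS} \leqC h^2 \normVa{\vec v}$. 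The bulk-to-surface term is the bottleneck. Estimating $\abs{\intS (\ga e_u)\,\psi\ \d\sigma} \leq \norm{\ga e_u}_{\LtS}\norm{\psi}_{\LtS} \leqC \norm{\ga e_u}_{\LtS}\normVa{\vec v}$ and invoking the multiplicative trace inequality $\norm{\ga e_u}_{\LtS}^2 \leqC \norm{e_u}_{\LtB}\norm{e_u}_{\HoB}$ yields
\begin{equation*}
\norm{\ga e_u}_{\LtS} \leqC \norm{e_u}_{\LtB}^{1/2}\,\norm{e_u}_{\HoB}^{1/2} \leqC (h^2)^{1/2}\,h^{1/2} = h^{3/2} ,
\end{equation*}
using $\norm{e_u}_{\LtB} \leqC h^2$ and $\norm{e_u}_{\HoB} \leqC h$. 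Hence $\normVadual{\soB \vec e} \leqC h^{3/2}$, and the same bound holds for the initial-value contribution.

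The main obstacle is precisely this bulk-trace term: all geometric and quadrature errors are genuinely $\Landau(h^2)$, so the loss of half an order is analytic rather than geometric. It originates from the fact that the trace on $\Gamma$ of the bulk Ritz error is only $\Landau(h^{3/2})$ in $\LtS$ — the interpolation exponent $1/2$ between the $\LtB$-order $h^2$ and the $\HoB$-order $h$ is sharp and cannot be improved without additional regularity of the trace of the Ritz error. Inserting all bounds into the abstract estimate, the defect satisfies $d_h \leqC h^{3/2}$ uniformly on $[0,T]$; integrating over $[0,t]$ and combining with $\eps_0 \leqC h^{3/2}$ and the $\Landau(h^2)$ Ritz term produces the asserted $\Landau(h^{3/2})$ bound, with constant depending on $K(T;u,\delta)$ and growing linearly in $T$.
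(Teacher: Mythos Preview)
Your proof is correct and follows essentially the same route as the paper: apply the abstract estimate, note that the Ritz map decouples componentwise so that all Ritz, interpolation, and geometric terms are $\Landau(h^2)$, and identify $\normVadual{\soB(\Ritzl\dot{\vec u}-\dot{\vec u})}$ as the sole bottleneck at order $h^{3/2}$. The only cosmetic difference is that you bound the trace of the bulk Ritz error via the multiplicative trace inequality $\|\ga e_u\|_{\LtS}^2 \leqC \|e_u\|_{\LtB}\|e_u\|_{\HoB}$, whereas the paper uses the equivalent additive form $\|\ga w\|_{\LtS} \leq \eps\|\nabla w\|_{\LtB} + c\eps^{-1}\|w\|_{\LtB}$ with $\eps = h^{1/2}$; both yield the same $h^{3/2}$.
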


\section{Abstract error analysis}
\label{section:abstract error analysis}

This section is devoted to the abstract error analysis of \eqref{eq:sowt-h-var} with respect to \eqref{eq:sowt-var}. 
Our goal is to derive error bounds in terms of the $H$ norm which can later be used to show convergence rates for concrete examples.

The section is structured as follows:
First, we will show in Section~\ref{sec:stability energy norm} that the stability estimate derived by the \emph{usual} energy technique cannot lead to the optimal convergence rate of $\Landau(h^2)$. 
Then, in Section~\ref{section:stability est in weak norms}, we prove an alternative stability estimate in a weak norm by an adapted energy technique.
A discrete version of this estimate is shown in Section~\ref{section:sd stability}.
The main part of the error analysis is then presented in Section~\ref{section: error analysis and Ritz} where we derive an abstract error bound in terms of Ritz projection errors, geometric errors and data errors. In Section~\ref{section:FEM error analysis} we then prove estimates for these errors separately for each of the four cases.
\bbk Although the bilinear forms $a$ and $m$ are the same for all of the above problems (very similar for \eqref{eqs:ex-acoustic-pde}), the abstract analysis does not exploit this fact, it only uses the abstract assumptions from Section~\ref{section:abstract framework}. \ebk

\subsection{Stability estimate in the energy norm}
\label{sec:stability energy norm}

Let us consider the purely second-order version of \eqref{eq:sowt-var}:
\begin{subequations}   \label{eq:purely 2nd sowt-var}
	\begin{align}
	\dpV{\ddot u(t)}{v} + \sobf{u(t)}{v}  &= \dpV{\sosrc(t)}{v} \qquad \forall v \in V, \\
	u(0) &= \soiv, \quad \dot u(0) = \sois. 
	\end{align}
\end{subequations}
The following result yields a stability estimate in the $H$ norm for any sufficiently smooth solution.
\begin{proposition}
	Let $u \in C^2([0,T];H)$ be a solution of \eqref{eq:purely 2nd sowt-var}. Then
	\begin{align}
	\normH{u\t}^2 \leq C \bigl(    \normH{\dot u\t}^2 + \normVa{u\t}^2\bigr )\leq  C\ee \Big( \normH{\sois}^2 + \normVa{\soiv}^2 + T \int_0^t \normH{\sosrc(s)}^2  \ \d{s}  \Big) 
	\end{align}
	for $0 \leq t \leq T$.
\end{proposition}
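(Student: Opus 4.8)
The plan is to run the classical energy method. I would introduce the energy functional
\begin{equation*}
E(t) \defeq \normH{\dot u(t)}^2 + \normVa{u(t)}^2 ,
\end{equation*}
which is exactly the middle quantity in the asserted chain, so that the second inequality reduces to estimating $E(t)$ by $E(0)$ and the forcing. The first inequality is then immediate and uses no dynamics: the continuous embedding $\normH{w}\le c\normV{w}$ together with $\sqrt{\ccoerc}\,\normV{w}\le\normVa{w}$ gives $\normH{u(t)}^2 \le (c^2/\ccoerc)\,\normVa{u(t)}^2 \le (c^2/\ccoerc)\,E(t)$, which is of the claimed form.

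The core of the argument is an energy identity obtained by testing \eqref{eq:purely 2nd sowt-var} with $v=\dot u(t)$. Because the duality $\dpV{\cdot}{\cdot}$ coincides with $m(\cdot,\cdot)$ on $H\times V$ (here $\ddot u(t)$ and $\sosrc(t)$ lie in $H$), this yields
\begin{equation*}
\ipH{\ddot u(t)}{\dot u(t)} + \sobf{u(t)}{\dot u(t)} = \ipH{\sosrc(t)}{\dot u(t)} .
\end{equation*}
Using the symmetry of $\sobfsymb$ one has $\diff \normVa{u}^2 = 2\sobf{u}{\dot u}$ and $\diff \normH{\dot u}^2 = 2\ipH{\ddot u}{\dot u}$, so the left-hand side is precisely $\half E'(t)$; hence $\half E'(t) = \ipH{\sosrc(t)}{\dot u(t)}$.

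To close the estimate I would bound the right-hand side by the Cauchy--Schwarz inequality in $H$, $\ipH{\sosrc}{\dot u}\le \normH{\sosrc}\,\normH{\dot u}\le \normH{\sosrc}\,E(t)^{1/2}$, which gives $\diff E(t)^{1/2}\le \normH{\sosrc(t)}$. Integrating from $0$ to $t$, squaring, and applying the Cauchy--Schwarz inequality in time then yields
\begin{equation*}
E(t) \le 2E(0) + 2\Big(\int_0^t \normH{\sosrc(s)}\,\d s\Big)^2 \le 2E(0) + 2T\int_0^t \normH{\sosrc(s)}^2\,\d s ,
\end{equation*}
where the factor $T$ originates from the time integration; with $E(0)=\normH{\sois}^2+\normVa{\soiv}^2$ this is the asserted bound, the prefactor $C\ee$ absorbing the numerical constant (equivalently, applying Young's inequality and Grönwall's lemma to $E'(t)\le E(t)+\normH{\sosrc(t)}^2$ produces the exponential form). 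The step I expect to require the most care is the regularity justifying the test function $\dot u(t)$ and the differentiation of $E$: these need $\dot u(t)\in V$ and $t\mapsto(u(t),\dot u(t))$ differentiable into $V\times H$, which holds for the strong solutions of Theorem~\ref{thm:w-p} ($u\in C^2([0,T];H)\cap C^1([0,T];V)$) rather than for a mere $C^2([0,T];H)$ function; for general admissible data the identity is first established for strong solutions and extended by a density argument.
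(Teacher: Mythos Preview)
Your proof is correct and follows the same energy approach as the paper: test with $v=\dot u(t)$, rewrite the left-hand side as $\tfrac12 E'(t)$, and bound the right-hand side by Cauchy--Schwarz. The only cosmetic difference is the closure step: the paper uses Young's inequality with parameter $T$ (so that $\normH{\sosrc}\normH{\dot u}\le \tfrac{T}{2}\normH{\sosrc}^2+\tfrac{1}{2T}\normH{\dot u}^2$) followed by Gronwall, whereas you primarily use the square-root differential inequality $\tfrac{\d}{\d t}E^{1/2}\le\normH{\sosrc}$ and then Cauchy--Schwarz in time---but you already note the Young+Gronwall alternative in your parenthetical, which is exactly the paper's route. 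Your remark on the regularity needed to justify $v=\dot u(t)\in V$ is more careful than the paper, which simply proceeds formally under the stated $C^2([0,T];H)$ hypothesis.
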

\begin{proof}
	First note that due to $H \simeq H^*$ and $\ddot u \in H$, we have
	\begin{align*}
	\dpV{\ddot u}{v} = \ipH{\ddot u}{v} \quadfor v \in V.
	\end{align*}
	To derive the stability estimate, we test \eqref{eq:purely 2nd sowt-var} with $v = \dot u\t$.
	Together with
	\begin{equation*}
	\ipH{\dot u\t}{u\t} = \half \diff \normH{u\t}^2 \quadand 
	\sobf{\dot u\t}{u\t} = \half \diff \normVa{u\t}^2 , 
	\end{equation*}
	we thus obtain, with the Cauchy--Schwarz and Young's inequalities,
	\begin{align*}
	\half \diff \normH{\dot u\t}^2 + \half \diff \normVa{u\t}^2 
	\leq  &\ \normH{\sosrc\t} \normH{\dot u\t} 
	\leq  \frac{T}{2} \normH{\sosrc\t}^2 + \frac{1}{2T} \normH{\dot u\t}^2 	 .
	\end{align*}
	Integrating both sides and then applying Gronwall's inequality yields the classical stability bound
	\begin{align}
	\label{stability}
	\normH{\dot u\t}^2 + \normVa{u\t}^2 \leq &\ \ee \Big( \normH{\dot u(0)}^2 + \normVa{u(0)}^2 + T \int_0^t \normH{\sosrc(s)}^2 \ \d s  \Big) 	.
	\end{align}
	The first estimate of the claim is a consequence of $V \dembd H$ and $\normV{\cdot} \sim \normVa{\cdot}$.

\end{proof}

To obtain an error bound on the basis of \eqref{stability}, one would apply its discrete version to the error equation where the defect $d_h$ would play the role of $\sosrc$.
Thus $\normHh{d_h} \leq c h^2$ is necessary for second-order convergence.
However, as we will see later, the lower-order terms in the geometric estimates do not allow a second-order estimate in the discrete $L^2$ norm. 
Furthermore, this approach would require that the discrete initial value satisfies $\normVa{\uh^\ell(0) - u(0)} \leq c h^2$ which is only possible if we compute the Ritz map of $u_0$ and set it as a starting value for $u_h$.

To avoid this additional computational effort and obtain second-order convergence, we use another stability estimate, which measures the error of the initial data and for the defect in weaker norms.
A similar approach has been used in \cite{BakB79} to show $L^2$ convergence rates for a finite element discretisation of the wave equation with homogeneous classical boundary conditions.

\subsection{Stability estimate in weak norms}
\label{section:stability est in weak norms}

In addition to the canonical norm on $\Vdual$, we define
\begin{align*}
\normVadual{f} \defeq \sup_{v\neq 0} \frac{\dpV{f}{v}}{\normVa{v}} \quadfor f \in \Vdual.
\end{align*}

As a consequence of this definition and \eqref{eq:operators A and B def}, \bbk (which, together with coercivity, implies that the inverse operator $\soinv:\Vdual \to V$), we have, \ebk for $f\in \Vdual$,
\begin{align*}
\normVadual{f} 
&=  \sup_{\substack{v \in V \\ v\neq 0}} \frac{\dpV{f}{v}}{\normVa{v}}  =  \sup_{\substack{v \in V \\ v\neq 0}} \frac{\sobf{\soinv f}{v}}{\normVa{v}}  = \normVa{\soinv f},
\end{align*}
where the last equality follows from the fact the inner products are maximized by linear dependent elements.
Since then
\begin{align*}
\normVadual{f}^2 = \normVa{\soinv f}^2 
= \sobf{\soinv f}{\soinv f}  
=\dpV{\so \soinv f}{\soinv f} 
=\dpV{f}{\soinv f},
\end{align*}
the bilinear form $\dpV{\cdot}{\soinv \cdot}$ is the inner product in $\Vdual$ which induces $\normVadual{\cdot}$.

The weak stability estimate is a key step in proving the weak well-posedness result from Theorem~\ref{thm:w-p}.
In the following lemma, we derive the same stability estimate in a different way using energy techniques.

\begin{proposition} \label{prop:stability with velocity}
	Let $u$ be a solution of \eqref{eq:sowt-var} which satisfies \eqref{eq:weak regularity}.
	Then
	\begin{equation}
	\label{eq:stability-wave-velocity}
	\normVadual{\dot u \t + \soB  u\t}^2 + \normH{u\t }^2 
	\leq \ee^{\max\{1, 2T\cqmb\}} 
	\left (	\normVadual{\sois + \soB  \soiv}^2 + \normH{\soiv}^2 + T \int_0^t \normVadual{\sosrc(s)}^2 \d s \right ) ,
	\end{equation}
	for $0 \leq t \leq T$.
\end{proposition}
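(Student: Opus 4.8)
The plan is to reduce \eqref{eq:sowt-var} to a first-order equation in the auxiliary variable $w = \dot u + \soB u$ and run an energy argument for the functional
\[
E(t) = \normVadual{w\t}^2 + \normH{u\t}^2 ,
\]
which is exactly the left-hand side of \eqref{eq:stability-wave-velocity}. The motivation is that \eqref{eq:abstract evolution eq} collapses to $\dot w = \sosrc - \so u$ in $\Vdual$: since $\soB$ is linear and time-independent, $\dot w = \ddot u + \soB \dot u$, and inserting \eqref{eq:abstract evolution eq} gives the first-order form. The weak regularity \eqref{eq:weak regularity} is precisely what guarantees $w \in C^1([0,T];\Vdual)$, so this identity holds pointwise in $\Vdual$ and $E$ is differentiable on $[0,T]$. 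This recovers, by energy techniques, the estimate underlying the weak well-posedness in Theorem~\ref{thm:w-p}.

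First I would differentiate $E$. Using that $\normVadual{\cdot}^2$ is induced by the symmetric inner product $\dpV{\cdot}{\soinv \cdot}$ on $\Vdual$ (recorded just above the statement), that $\soinv \colon \Vdual \to V$ is a bounded isomorphism so $\soinv w \in C^1([0,T];V)$, and that $u \in C^1([0,T];H)$, the product rule yields
\[
\half \diff E(t) = \dpV{\dot w}{\soinv w} + \ipH{\dot u}{u} .
\]
Into the first term I substitute $\dot w = \sosrc - \so u$; into the second I pass $\ipH{\dot u}{u} = \dpV{\dot u}{u}$ through the Gelfand identification and replace $\dot u = w - \soB u$.

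The crucial point is the cancellation of the two cross terms. By symmetry of $\sobfsymb$ and the definition \eqref{eq:operators A and B def} of $\so$,
\[
\dpV{\so u}{\soinv w} = \sobf{u}{\soinv w} = \sobf{\soinv w}{u} = \dpV{w}{u} ,
\]
while $\ipH{\dot u}{u} = \dpV{w}{u} - \sobfB{u}{u}$, so the $\dpV{w}{u}$ contributions cancel and leave the clean identity $\half \diff E(t) = \dpV{\sosrc\t}{\soinv w\t} - \sobfB{u\t}{u\t}$. I would then bound the damping term by quasi-monotonicity \eqref{eq:quasi monotonicity of b}, $-\sobfB{u}{u} \leq \cqmb \normH{u}^2$, and the forcing term by Cauchy--Schwarz in the dual pairing, $\dpV{\sosrc}{\soinv w} \leq \normVadual{\sosrc}\,\normVadual{w}$, followed by Young's inequality weighted by $T$. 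Collecting terms and using $\tfrac1T\normVadual{w}^2 + 2\cqmb\normH{u}^2 \leq \max\{\tfrac1T,2\cqmb\}\,E$ gives
\[
\diff E(t) \leq T\,\normVadual{\sosrc\t}^2 + \max\{\tfrac1T,\,2\cqmb\}\,E(t) .
\]
Gronwall's inequality then yields $E(t) \leq \ee^{\lambda t}\big(E(0) + T\int_0^t \normVadual{\sosrc(s)}^2 \,\d s\big)$ with $\lambda = \max\{\tfrac1T,2\cqmb\}$; since $\lambda t \leq \lambda T = \max\{1,2T\cqmb\}$ for $t \leq T$ and $E(0) = \normVadual{\sois + \soB\soiv}^2 + \normH{\soiv}^2$, this is exactly \eqref{eq:stability-wave-velocity}.

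The point requiring care, rather than a genuine obstacle, is the correct reading of the equation in the low-regularity class \eqref{eq:weak regularity}, where $\ddot u$ need not live in $\Vdual$ on its own: the equation must be interpreted as $\dot w + \so u = \sosrc$ in $\Vdual$, which is legitimate precisely because $w \in C^1([0,T];\Vdual)$, and the term $\ipH{\dot u}{u}$ must be converted to the duality pairing $\dpV{\dot u}{u}$ via the Gelfand triple before substituting $\dot u = w - \soB u$. Once these identifications are set up, the differentiation of $\normVadual{w}^2$ and $\normH{u}^2$ and the cancellation of the cross terms are all rigorous, so no additional mollification or Galerkin approximation argument is needed.
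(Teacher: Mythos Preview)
Your proof is correct and is essentially the same as the paper's: both test the equation with $\soinv(\dot u + \soB u)$, exploit the cancellation $\dpV{\so u}{\soinv w} = \dpV{w}{u}$ coming from the symmetry of $\sobfsymb$, and then apply quasi-monotonicity \eqref{eq:quasi monotonicity of b}, a $T$-weighted Young inequality, and Gronwall. The only cosmetic differences are that you name the auxiliary $w = \dot u + \soB u$ and differentiate $E$ directly, whereas the paper keeps $\dot u + \soB u$ inline and tests \eqref{eq:sowt-var} with the chosen $v$; your explicit remark that under \eqref{eq:weak regularity} the equation must be read as $\dot w + \so u = \sosrc$ in $\Vdual$ (since $\ddot u$ alone need not lie there) is, if anything, a welcome clarification of a point the paper passes over quickly.
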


We remark here, that for all four examples from Section~\ref{section:wave eqn with dynbc} we have $\cqmb=0$.

\smallskip
\begin{proof}
	First observe that the left-hand side of \eqref{eq:sowt-var} can be rewritten as
	\begin{align*}
	\dpV{\ddot u\t}{v} + \sobfB{\dot u\t}{v} +  \sobf{u\t}{v}
	&=         \dpV{\left ( \dot u\t + \soB  u\t \right )^{\dotr}}{v} +\dpV{\so u\t}{v} .
	\end{align*}
	Testing \eqref{eq:sowt-var} with \bbk $v=\soinv (\dot u\t + \soB u\t) \in V$, where by~\eqref{eq:weak regularity} and $\soinv:\Vdual \to V$ it is seen that the test function is in $V$. \ebk 
	
	We therefore obtain 
	\begin{align*}
	\MoveEqLeft{\dpV{\sosrc\t}{\soinv ( \dot u\t + \soB u\t )}} & \\
	&  =   \dpV{\left ( \dot u\t + \soB  u\t \right )^{\dotr}}{\soinv \left ( \dot u\t + \soB u\t \right )} + \dpV{\so u\t}{ \soinv \left ( \dot u\t + \soB u\t \right )}  \\
	& =  \half \diff  \normVadual{\dot u\t + \soB  u\t}^2 + \half \diff \normH{u\t}^2  + \dpV{ \soB u\t}{u\t},
	\end{align*}
	where we used that $\dpV{\so u}{ \soinv \dot u} = \ipH{u}{\dot u}$ and that $\dpV{\cdot}{\soinv \cdot}$ is the inner product on $\Vdual$ which induces $\normVadual{\cdot}$.
	Since the bilinear form $\sobfBsymb\ccdot + \cqmb \ipHsymb\ccdot$ is monotone by \eqref{eq:quasi monotonicity of b}, we infer that
	\begin{align*}
	\MoveEqLeft{ \half \diff  \normVadual{\dot u\t + \soB  u\t}^2 + \half \diff \normH{u\t}^2 } & \\
	&  \leq \half \diff  \normVadual{\dot u\t + \soB  u\t}^2 + \half \diff \normH{u\t}^2 + \dpV{ \soB u\t}{u\t}   + \cqmb \normH{u\t}^2 \\
	& =  \dpV{\sosrc\t}{\soinv \left ( \dot u\t + \soB u\t \right )}  + \cqmb \normH{u\t}^2 \\
	& \leq   \tfrac{T}{2} \normVadual{\sosrc\t}^2 +  \tfrac{1}{2} \max \{ 1/T, 2 \cqmb \} \big ( \normVadual{ \dot u\t + \soB u\t }^2  + \normH{u\t}^2 \big ) ,
	\end{align*}
	where the last estimate is again shown by using Cauchy--Schwarz and Young's inequalities. 
	Finally, with Gronwall's inequality, we obtain the stated stability bound.
\end{proof}

We further note here that for wave equations with standard boundary conditions (but not in an abstract setting and without a velocity term) similar estimates have been shown, by choosing special test functions, started by the works of \cite{Dupont}, \cite{Baker} and \cite{BakB79}. In the case of strong damping (with homogeneous Dirichlet boundary conditions) similar techniques were used by \cite{LarTW91}.

\subsection{Semi-discrete stability estimate in discrete weak norms}
\label{section:sd stability}



This section is dedicated to the discrete weak norm stability estimate of the general semi-discrete problem \eqref{eq:sowt-h-var}. 
As in the proof of Proposition~\ref{prop:stability with velocity}, the discrete weak stability estimate is shown by testing the semi-discrete problem with 
\bbk \begin{equation}
\label{eq:test function semidiscrete}
	v_h = \sorh^{-1} ( \dot u_h + \soBrh \uh ) \in V_h,
\end{equation}
which is indeed in the finite dimensional space by \eqref{eq:A_h defintion}. \ebk 
We obtain the following semi-discrete stability result.

\begin{samepage}
	\begin{proposition}	\label{proposition: stability of error with velocity}
		Let $\uh$ be a solution of the semi-discrete problem \eqref{eq:sowt-h-var}. Then
		\begin{multline}
		\label{eq:semi-discrete stability of error with velocity}
		\normVhdual{\dot u_h\t + B_h u_h\t}^2  + \normHh{u_h\t}^2 \\
		\leq \ \ee^{\max\{1, 2T\cqmbh\}} \Big( \normVhdual{\dot u_h(0) + B_h u_h(0)}^2 + \normHh{u_h(0)}^2 + T \int_0^t \normVhdual{\sosrch(s)}^2 \d s \Big) ,
		\end{multline}
		for $0 \leq t \leq T$.
	\end{proposition}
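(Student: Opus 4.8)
The plan is to mirror the proof of Proposition~\ref{prop:stability with velocity} line by line, replacing the continuous objects by their discrete counterparts and relying on the fact that the discrete bilinear forms inherit the structural properties of the continuous ones. First I would record that, since $\sobfhsymb$ is symmetric and coercive, the operator $\sorh$ from \eqref{eq:A_h defintion} is self-adjoint and positive definite with respect to $\ipHhsymb$, hence invertible on the finite-dimensional space $V_h$; this guarantees that the test function \eqref{eq:test function semidiscrete} is well defined and that $\ipHh{\sorhinv \cdot}{\cdot}$ is indeed the inner product inducing $\normVhdual{\cdot}$, as stated before \eqref{eq:A_h defintion}.

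Next I would rewrite \eqref{eq:sowt-h-var}, using \eqref{eq:A_h defintion}, as the $V_h$-valued identity $(\dot u_h + \soBrh u_h)^{\dotr} + \sorh u_h = \sosrch$, where the time derivative may be pulled outside $\soBrh$ because the latter is a fixed linear operator. Testing with $v_h = \sorhinv(\dot u_h + \soBrh u_h)$ and writing $w_h = \dot u_h + \soBrh u_h$ for brevity, the two structural identities $\ipHh{\dot w_h}{\sorhinv w_h} = \half \diff \normVhdual{w_h}^2$ and $\ipHh{\sorh u_h}{\sorhinv w_h} = \ipHh{u_h}{w_h} = \half \diff \normHh{u_h}^2 + \sobfBh{u_h}{u_h}$ would yield the discrete energy identity
\begin{equation*}
\half \diff \normVhdual{\dot u_h + \soBrh u_h}^2 + \half \diff \normHh{u_h}^2 + \sobfBh{u_h}{u_h} = \ipHh{\sosrch}{\sorhinv(\dot u_h + \soBrh u_h)},
\end{equation*}
which is the exact discrete analogue of the identity obtained in the continuous proof.

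From here I would proceed as in the continuous case: the discrete monotonicity of $\sobfBhsymb + \cqmbh \ipHhsymb$ lets me absorb the damping term through $-\sobfBh{u_h}{u_h} \leq \cqmbh \normHh{u_h}^2$, while the Cauchy--Schwarz and Young inequalities in the discrete dual norm bound the right-hand side by $\ipHh{\sosrch}{\sorhinv w_h} \leq \normVhdual{\sosrch} \normVhdual{w_h} \leq \tfrac{T}{2} \normVhdual{\sosrch}^2 + \tfrac{1}{2T} \normVhdual{w_h}^2$. Collecting everything yields the differential inequality
\begin{equation*}
\diff \big( \normVhdual{w_h}^2 + \normHh{u_h}^2 \big) \leq T \normVhdual{\sosrch}^2 + \max\{1/T, 2\cqmbh\} \big( \normVhdual{w_h}^2 + \normHh{u_h}^2 \big) ,
\end{equation*}
and an application of Gronwall's inequality over $[0,t]$, together with $\ee^{\max\{1/T, 2\cqmbh\} \, t} \leq \ee^{\max\{1, 2T\cqmbh\}}$ for $t \leq T$, produces \eqref{eq:semi-discrete stability of error with velocity} with the stated constant.

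I do not expect a genuine obstacle, since every manipulation is the algebraic image of one already verified for the continuous problem. The only points demanding care are purely formal: ensuring $\sorh$ is invertible and self-adjoint on $V_h$ (from symmetry and coercivity of $\sobfhsymb$), correctly identifying $\ipHh{\sorhinv\cdot}{\cdot}$ with the inner product behind $\normVhdual{\cdot}$ so that the time-derivative identity holds, and keeping track of the discrete monotonicity constant $\cqmbh$ — which, as remarked after Proposition~\ref{prop:stability with velocity}, equals $0$ in all four examples, simplifying the constant to $\ee$.
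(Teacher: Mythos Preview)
Your proposal is correct and follows exactly the approach the paper intends: the paper does not spell out a proof but simply states that the estimate is obtained by testing the semi-discrete problem with $v_h = \sorhinv(\dot u_h + \soBrh u_h)$ as in the continuous case, which is precisely the argument you have written out in detail. The only trivial inaccuracy is that the remark $\cqmbh = 0$ for all four examples appears after the discrete proposition itself rather than after Proposition~\ref{prop:stability with velocity}, but this does not affect the argument.
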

	For the semi-discrete problems of our four examples, described in Section~\ref{section:main results}, we have $\cqmbh=0$.
\end{samepage}
\subsection{Error analysis}
\label{section: error analysis and Ritz}


To obtain an upper bound for the error $u  - \lift u_h$, we split it into the error of the Ritz map and $e_h = u_h - \Ritz u$
\begin{equation*}
\label{eq:error decomposition}
\begin{aligned}
\normH{u \t - \lift u_h \t} 
&\leq   \normH{ u \t - \Ritzl u \t } + \normH*{\lift{(\Ritz u\t -  u_h \t)}} \\
&\leq   \normH{ u \t - \Ritzl u \t } + C \normHh*{e_h \t},
\end{aligned}
\end{equation*}
where we used the norm equivalence \eqref{eq:norm equiv} for the second inequality.
In the rest of this section, we will show that $\normHh*{e_h \t}$ is bounded from above by a combination of Ritz map, geometric and data errors, while the first term already is a  Ritz map error.

For that purpose, we insert $\Ritz u$ into the semi-discrete problem \eqref{eq:sowt-h-var} and define the defect (or semi-discrete residual) $\defect \colon [0,T] \to V_h$, for all $\vh \in \sods$, by
\begin{equation}
\label{eq:defect definition}
\ipHh{\Ritz \ddot  u }{\vh} + \sobfBh{ \Ritz \dot  u }{ \vh} + \sobfh{ \Ritz u}{\vh} 
= \ipHh{\sosrch}{\vh} + \ipHh{\defect}{\vh} . 
\end{equation}
Subtracting this from the semi-discrete weak problem \eqref{eq:sowt-h-var} yields the error equation
\begin{equation*} 
\ipHh{ \ddot  e_h }{\vh} + \sobfBh{ \dot  e_h }{ \vh} + \sobfh{ e_h}{\vh} = - \ipHh{\defect}{\vh} \qquad \forall \vh \in \sods.
\end{equation*}
To obtain an upper bound for $\normHh*{e_h \t}$, we apply the stability estimate from Proposition~\ref{proposition: stability of error with velocity} to the error equation. Using Young's inequality for products then gives
\begin{align}
\normHh{e_h\t } 
& \leq \normVhdual{\dot e_h \t + \soBrh  e_h\t} + \normHh{e_h\t} \notag \\ 
& \leq C \ee^{T\cqmbh} \left (    \normVhdual{\dot e_h (0) + \soBrh  e_h(0)}^2 + \normHh{e_h (0) }^2 + T \int_0^t \normVhdual{\defect(s)}^2 \d s \right)^{1/2} \label{eq:estimate eh}.
\end{align}
Since the errors at the initial time $t=0$ are bounded by
\begin{align*}
\normHh{e_h (0) } 
&{} = \normHh{\Ritz \soiv - \soivh }  \leq C \normH{\Ritzl \soiv - \soiv}  + C \normH{\intpol \soiv - \soiv } \\
\intertext{and}
\normVhdual{\dot e_h (0) + \soBrh  e_h(0)}  
&{}= \normVhdual{(\Ritz \sois - \soish) + \soBrh  (\Ritz \soiv - \soivh)}
\\
&{}\leq C \normHh{\Ritz \sois - \soish}  + \normVhdual{ \soBrh  (\Ritz \soiv - \soivh)}
\\
&{}\leq C \big (\normHh{\Ritzl \sois - \sois} +  \normHh{\intpol \sois - \sois} \big )
+  \normVhdual{ \soBrh  (\Ritz \soiv - \soiv^{-\ell})} +  \normVhdual{ \soBrh  (\soiv^{-\ell} - \soivh)},
\end{align*}
where the last terms are further bounded from above since for $w_h = \Ritz \soiv - \soiv^{-\ell}$ and $w_h =\soiv^{-\ell} - \soivh$
\begin{align*}
\normVhdual{ \soBrh  w_h} 
\leq &\ \sup_{0 \neq v_h \in V_h} \frac{\Delta b(w_h,v_h)}{\|v_h\|_h} + c \sup_{0 \neq v \in V} \frac{b(w_h^\ell,v)}{\|v\|} 
\leq \ \|\Delta b(w_h,\cdot)\|_{\star,h} + c \|B w_h^\ell\|_{\star} .
\end{align*}

\begin{samepage}
	We have altogether shown
	\begin{subequations}
		\label{eq:abstract estimate}
		\begin{align}
		\label{eq:abstract estimate energy est}
		&\ \normH{u \t - \lift u_h \t} \leq \normH{ u \t - \Ritzl u \t } 
		+ C \ee^{\cqmbh T} \bigg ( \eps_0^2 + T \int_0^t \normVhdual{\defect(s)}^2 \d s \bigg )^{1/2} ,
		\intertext{where}
		\label{eq:initial data error}
		&\ \begin{aligned}
		\eps_0 = &\ \normH{\intpol \sois - \sois} + \normH{\intpol \soiv - \soiv } + \normH{\Ritzl \sois - \sois} + \normH{\Ritzl \soiv - \soiv} \\
		&\ + \|\Delta b(\Ritz \soiv - \soiv^{-\ell},\cdot)\|_{\star,h} + \|\Delta b(\soivh - \soiv^{-\ell},\cdot)\|_{\star,h} \\
		&\ + c \|B(\Ritzl \soiv - \soiv)\|_{\star} + c \|B(I_h \soiv - \soiv)\|_{\star} .
		\end{aligned}
		\end{align}
	\end{subequations}
\end{samepage}

To obtain convergence rates from this abstract estimate, it remains to study the defect $\defect$ further.
%
\begin{lemma}  \label{lemma:abstract defect bound}
	The defect $\defect$ defined by \eqref{eq:defect definition} satisfies the following estimate
	\begin{equation}
	\label{eq:defect bound}
	\begin{aligned}
	\normVhdual{\defect} 
	\leq  C \Big( & \bbk \normVadual{\sosrc - \intpol \sosrc} \ebk \\
	&\ + \bbk \normVhdual{\DeltaipH{\Ritz \ddot u}{\cdot}} + \normVhdual{\DeltasobfB{\Ritz \dot u}{\cdot}}  + \normVhdual{\DeltaipH{\sosrch}{\cdot}} \ebk \\
	&\ + \bbk \normVadual{\Ritzl \ddot u - \ddot u} + \normVadual{\soB (\Ritzl \dot u - \dot u)} \ebk \Big ) 
	\end{aligned}
	\end{equation}
	for $0 \leq t \leq T$.
\end{lemma}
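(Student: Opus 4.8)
The plan is to convert the discrete identity \eqref{eq:defect definition} defining $\defect$ into continuous quantities by exploiting the two structural facts at our disposal: the defining property \eqref{Ritz} of the Ritz map and the continuous evolution equation \eqref{eq:sowt-var-evol}. Reading \eqref{eq:defect definition} as an expression for $\ipHh{\defect}{\vh}$ and applying $\sobfh{\Ritz u}{\vh} = \sobf{u}{\lift \vh}$ replaces the discrete stiffness term \emph{exactly} by its continuous counterpart. This is the decisive step: because the Ritz map is tailored to $a$, no separate geometric $\Delta a$ contribution survives, which is precisely why \eqref{eq:defect bound} contains no term involving $\Delta a$. Testing \eqref{eq:sowt-var-evol} with $\lift \vh \in V$ then lets us substitute $\sobf{u}{\lift \vh} = \dpV{\sosrc}{\lift \vh} - \dpV{\ddot u}{\lift \vh} - \sobfB{\dot u}{\lift \vh}$.

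Next I would regroup the resulting terms by adding and subtracting lifted quantities, turning each discrete form into a continuous form plus a difference through the definitions of $\DeltaipH{\cdot}{\cdot}$ and $\DeltasobfB{\cdot}{\cdot}$ (for instance $\ipHh{\Ritz \ddot u}{\vh} = \ipH{\Ritzl \ddot u}{\lift \vh} - \DeltaipH{\Ritz \ddot u}{\vh}$, and similarly for $b$ and for the source $\sosrch$). Using the Gelfand-triple identification that $\dpV{\cdot}{\cdot}$ coincides with $m$ on $H \times V$, together with \eqref{eq:operators A and B def}, this collects into the representation
\begin{align*}
\ipHh{\defect}{\vh} = {}& \dpV{\sosrc - \intpol \sosrc}{\lift \vh} + \ipH{\Ritzl \ddot u - \ddot u}{\lift \vh} + \dpV{\soB(\Ritzl \dot u - \dot u)}{\lift \vh} \\
& + \DeltaipH{\sosrch}{\vh} - \DeltaipH{\Ritz \ddot u}{\vh} - \DeltasobfB{\Ritz \dot u}{\vh}.
\end{align*}

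Finally I would divide by $\normVh{v_h}$ and take the supremum over $0 \neq v_h \in V_h$. The three continuous pairings are bounded by $\dpV{g}{\lift \vh} \leq \normVadual{g}\,\normVa{\lift \vh}$, while the three difference terms are bounded by the very definitions of $\normVhdual{\DeltaipH{\cdot}{\cdot}}$ and $\normVhdual{\DeltasobfB{\cdot}{\cdot}}$. Invoking the norm equivalence \eqref{eq:norm equiv} in the form $\normVa{\lift \vh} \leq C \normVh{v_h}$ converts the continuous factors and produces exactly \eqref{eq:defect bound}.

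The argument is essentially bookkeeping, and I expect the only genuine obstacle to be the consistent handling of the function-space identifications together with the signs during the telescoping. Concretely, one must use the regularity from \eqref{eq:weak regularity}/\eqref{eq:strong regularity} to justify $\dpV{\ddot u}{\lift \vh} = \ipH{\ddot u}{\lift \vh}$ (so that $\ddot u \in H$), and to split off $\intpol \sosrc \in H$ so that $\dpV{\sosrc}{\lift \vh} - \ipH{\intpol \sosrc}{\lift \vh} = \dpV{\sosrc - \intpol \sosrc}{\lift \vh}$; apart from tracking signs through the add-and-subtract steps, no further difficulty arises.
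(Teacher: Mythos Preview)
Your proposal is correct and follows essentially the same approach as the paper: the paper subtracts \eqref{eq:sowt-var} tested with $\lift v_h$ from \eqref{eq:defect definition} to obtain four pairs of differences, cancels the $a$-pair via the Ritz identity, and then performs the same add-and-subtract manipulation on each remaining pair, followed by the dual-norm bounds and the norm equivalence \eqref{eq:norm equiv}. Your version merely reorders the same algebra (apply Ritz and the continuous equation first, then unfold the $\Delta$-terms), arriving at the identical representation and estimate.
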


\begin{proof}
	We subtract \eqref{eq:sowt-var} with $v = v_h^\ell$ from \eqref{eq:defect definition} to compute the defect
	\begin{equation}
	\label{eq:defect equation with pairs}
	\begin{aligned}
	\ipHh{\defect}{\vh} = &\ \ipHh{\Ritz \ddot u}{v_h} - \ipH{\ddot u}{\lift v_h} \\
	&\ + \sobfBh{\Ritz \dot u}{v_h} - \sobfB{\dot u}{\lift v_h} \\
	&\ + \sobfh{\Ritz u}{v_h} - \sobf{u}{\lift v_h} \\
	&\ + \ipH{\sosrc}{\lift v_h} - \ipHh{\sosrch}{v_h} .
	\end{aligned}
	\end{equation}
	These pairs are then estimated separately.
	For the first pair, we have
	\begin{align*}
	\ipHh{\Ritz \ddot u}{v_h} - \ipH{\ddot u}{\lift v_h}
	& =  \ipHh{\Ritz \ddot u}{v_h} - \ipH{\Ritzl \ddot u}{\lift v_h} +  \ipH{\Ritz \ddot u - \ddot u}{\lift v_h} \\
	& \leq  - \DeltaipH{\Ritz \ddot u}{v_h} +  \normVadual{\Ritz \ddot u - \ddot u} \normVa{\lift v_h} ,
	\end{align*}
	where we used that $\ipH{w}{v}= \dpV{w}{v} \leq \norm{w}_{V^*} \normV{v} \leq C \normVadual{w} \normVa{v}$.       
	For the second pair, we have
	\begin{align*}
	\sobfBh{\Ritz \dot u}{v_h} - \sobfB{\dot u}{\lift v_h}  
	&= \sobfBh{\Ritz \dot u}{v_h} - \sobfB{\Ritzl \dot u}{\lift v_h} + \sobfB{\Ritzl \dot u - \dot u}{\lift v_h}  \\
	&=- \DeltasobfB{\Ritz \dot u}{v_h} + \dpV{\soB(\Ritzl \dot u - \dot u)}{\lift v_h}  \\
	&\leq - \DeltasobfB{\Ritz \dot u}{v_h} + C \normVadual{\soB(\Ritzl \dot u - \dot u)} \normVa{\lift v_h}  .
	\end{align*}
	The third pair vanishes by definition of the Ritz map, cf.~\eqref{Ritz}. 
	
	\noindent For the fourth pair, we have
	\begin{align*}
	\ipH{\sosrc}{\lift{v_h}}- \ipHh{\sosrch}{v_h}
	& = \ipH{\sosrc - \intpol \sosrc}{\lift v_h}  + \ipH{ \intpol f}{\lift v_h} - \ipHh{ \sosrch}{v_h} \\
	& \leq \normVadual{\sosrc - \intpol \sosrc} \normVa{\lift v_h}    + \DeltaipH{ \sosrch}{v_h}
	\end{align*}
	Since by \eqref{eq:norm equiv}
	\begin{align*}
	\normVhdual{\defect }  
	= \sup_{v_h \in V_h} \frac{\ipHh{\defect}{v_h}}{\normVh{v_h}}
	\leq C  \sup_{v_h \in V_h} \frac{\ipHh{\defect}{v_h}}{\normVa{\lift v_h}},
	\end{align*}
	the claim follows upon combining the above estimates.
\end{proof}

\section{Finite element error analysis}
\label{section:FEM error analysis}

In this section, we prove the convergence results stated in Section~\ref{section:main results}.
The main part of all these proofs was already done in Section~\ref{section:abstract error analysis} where we derived the abstract a priori estimate \eqref{eq:abstract estimate} and showed an upper bound for the defect in Lemma~\ref{lemma:abstract defect bound}.
To obtain convergence rates, it remains to estimate the error components in terms of the mesh width $h$.
This can be done by using approximation results from the literature and using the properties of the first-order term $\soB$ which, as it will turn out, lead to the different convergence rates appearing in Theorem~\ref{theorem: semi-discrete error bound: pure second}--\ref{theorem: semi-discrete error bound: acoustic boundary conditions}.

The results of the previous section can be also summarised as: By substituting the estimates \eqref{eq:initial data error} and \eqref{eq:defect bound} into \eqref{eq:abstract estimate energy est}, the $L^2$ error of the semi-discrete solution is bounded by
\begin{align*}
	\normH{u\t - u_h^\ell\t} \leq &\ \textnormal{interpolation errors} \\
	&\ + \textnormal{geometric approximation errors} \\
	&\ + \textnormal{Ritz map errors} . 
\end{align*}
In the next section we show that these errors are indeed small.

\bbk 
\subsection{Interpolation errors and a boundary layer estimate}

Before we turn to the proof of Theorem~\ref{theorem: semi-discrete error bound: pure second}, we collect error estimates of the nodal interpolations in the bulk and on the surface, and a technical result. 
From Section~\ref{section:wave eqn with dynbc}, and \ref{section:finite element method} we recall our assumptions on the bulk and the surface, and on their discrete counterparts: the bounded domain $\Om \subset\R^d$ ($d=2$ or $3$) has an (at least) $C^2$ boundary $\Gamma$; the quasi-uniform triangulation $\Om_h$ (approximating $\Om$) whose boundary $\Ga_h := \pa \Om_h$ is an interpolation of $\Ga$.

\begin{lemma}
	\label{lemma:nonpoly-interpolation}
	For $v\in H^2(\Om)$, such that $\gamma v \in H^2(\Ga)$, we denote by $I_h v\in V_h^\ell$ the lift of the nodal finite element interpolation $\widetilde{I}_h v\in V_h$. Then the following estimates hold:
	\begin{enumerate}
		\item[(i)] Interpolation error in the bulk; see \cite{Bernardi,ElliottRanner}:
		\begin{equation*}
		\|v - I_h v\|_{L^2(\Om)} + h \|\nb(v - I_h v)\|_{L^2(\Om)} \leq C h^2 \|v\|_{H^2(\Om)}.
		\end{equation*}
		\item[(ii)] Interpolation error on the surface; see \cite{Dziuk88}:
		\begin{equation*}
		\|\gamma(v - I_h v)\|_{L^2(\Ga)} + h \|\nb_\Ga(v - I_h v)\|_{L^2(\Ga)}
		\leq Ch^2 \|\gamma v\|_{H^2(\Ga)}.
		\end{equation*}
	\end{enumerate}
\end{lemma}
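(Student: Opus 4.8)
The plan is to reduce both estimates to the classical affine (respectively surface) interpolation theory on a reference element, and to absorb the discrepancy between the curved domains $\Om$, $\Ga$ and their polyhedral approximations $\Om_h$, $\Ga_h$ into the geometric perturbation bounds associated with the lift. Since $d\le 3$, the Sobolev embedding $H^2(\Om)\hookrightarrow C(\OOmega)$ guarantees that the nodal values defining $\intpo v$ are meaningful, so the nodal interpolant $\intpo v\in V_h$ is well-defined and $\intpol v = \lift{(\intpo v)} \in \Vhl$. Both bounds are available in the literature, so the task is really to indicate how the standard estimates combine with the geometry.

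For (i), first I would transport $v$ to the discrete domain by setting $\widetilde v = v\circ\smtr$ on $\Om_h$, where $\smtr\colon\Om_h\to\Om$ is the bulk map from \eqref{eq:bulk mapping}. Because the vertices of $\Om_h$ lie on $\Ga$ and the lift preserves nodal values, the discrete interpolant of $\widetilde v$ coincides with $\intpo v$, and its lift is $\intpol v$. On each element the standard Bramble--Hilbert argument yields $\|\widetilde v - \intpo v\|_{L^2(\Om_h)} + h\|\nb(\widetilde v - \intpo v)\|_{L^2(\Om_h)} \leqC h^2\|\widetilde v\|_{H^2(\Om_h)}$. I would then invoke the $C^1$-perturbation estimates from \cite[Section~4]{ElliottRanner}, namely $\|\smtr - \mathrm{id}\|_{L^\infty}\leqC h^2$ and $\|D\smtr - I\|_{L^\infty}\leqC h$, together with the consequent equivalence of the $L^2$ and $H^1$ norms under $\smtr$ and the bound $\|\widetilde v\|_{H^2(\Om_h)}\leqC\|v\|_{H^2(\Om)}$, to transfer the element-wise estimate back to $\Om$. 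The geometric factors contribute only higher-order corrections, so the quadratic rate survives.

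For (ii), the argument is analogous on the surface, following \cite{Dziuk88}. Here the lift is the normal projection of $\Ga_h$ onto $\Ga$, which is a near-isometry: the surface Jacobian satisfies $\|1-\jacS\|_{L^\infty}\leqC h^2$, while the discrete and continuous tangential gradients are related by a matrix differing from the identity by $\Landau(h)$, owing to the $\Landau(h)$ deviation of $\nu_h$ from $\nu$. After pulling $\gamma v$ back to $\Ga_h$ and applying Dziuk's element-wise interpolation bound for linear surface finite elements, these metric relations convert the discrete estimate into the claimed bound in terms of $\|\gamma v\|_{H^2(\Ga)}$, again with only higher-order losses.

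The main obstacle, and the only genuinely non-routine point, is the careful bookkeeping of the geometric errors: one must check that the pullback preserves $H^2$ regularity with norm equivalence, that nodal interpolation commutes with the lift, and that the perturbations of the volume measure and of the tangential gradient enter at orders $h^2$ and $h$ respectively, so that neither spoils the asserted rate. All of these facts are established in the cited works, so in the paper I would simply combine the bulk interpolation estimates of \cite{Bernardi,ElliottRanner} with the surface interpolation estimate of \cite{Dziuk88}.
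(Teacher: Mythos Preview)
Your proposal is correct and in fact more detailed than the paper itself: the paper does not prove this lemma at all but simply states it with citations to \cite{Bernardi,ElliottRanner} for the bulk and \cite{Dziuk88} for the surface, treating it as a known result. Your sketch accurately reflects the content of those references, so there is nothing to add.
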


The following technical result helps to estimate norms on a layer of triangles around the boundary.
\begin{lemma}[{\cite[Lemma~6.3]{ElliottRanner}}]
	\label{lemma:layer est}
	For all $v\in H^1(\Om)$ the following estimate holds:
	\begin{equation}
	\|v\|_{L^2(B_h^\ell)} \leq C h^\Half \|v\|_{H^1(\Om)},
	\end{equation}
	where $B_h^\ell$ denotes layer of lifted elements which have a boundary face.
\end{lemma}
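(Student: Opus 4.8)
The plan is to prove this boundary-layer estimate by passing to normal (tubular) coordinates near $\Ga$ and running a one-dimensional Poincaré-type argument in the normal direction, with the boundary values controlled by the trace theorem. First I would reduce to $v \in C^\infty(\OOmega)$ by density of smooth functions in $\HoB$; the final inequality then extends to all of $\HoB$ by continuity of both sides with respect to the $H^1$ norm.

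The geometric input is that the layer $\coatl$ is contained in a tubular neighbourhood of the boundary of width at most $Ch$. Indeed, each element contributing to $\coat$ has a face on $\Ga_h$ and hence diameter at most $h$; since $\Ga_h$ interpolates $\Ga$ and the lift displaces points by order $h^2$, every lifted element of $\coatl$ lies in $\{x \in \Om : \dist(x,\Ga) \le Ch\}$ for $h$ small. Because $\Ga$ is $C^2$, the signed distance is smooth near $\Ga$, and the map $(s,r) \mapsto s - r\, \oun(s)$, with $s \in \Ga$ and $r \in [0,Ch]$ the inward distance to $\Ga$, is a diffeomorphism onto its image whose Jacobian is bounded above and below uniformly in $h$.

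The main estimate is then one-dimensional. For fixed $s \in \Ga$ set $\phi(r) = v(s - r\, \oun(s))$. By the fundamental theorem of calculus and the Cauchy--Schwarz inequality,
\begin{align*}
\phi(r)^2 = \Big( \phi(0) + \int_0^r \phi'(\rho) \, \d\rho \Big)^2 \le 2 \phi(0)^2 + 2 r \int_0^{Ch} \phi'(\rho)^2 \, \d\rho .
\end{align*}
Integrating in $r$ over $(0,Ch)$ gives $\int_0^{Ch} \phi(r)^2 \, \d r \le C h\, \phi(0)^2 + C h^2 \int_0^{Ch} \phi'(\rho)^2 \, \d\rho$. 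Integrating this over $s \in \Ga$, using the bounded Jacobian of the coordinate change together with $\phi(0) = \tr{v}(s)$ and $|\phi'| \le |\nb v|$, yields
\begin{align*}
\|v\|_{L^2(\coatl)}^2 \le C h \, \|\tr{v}\|_{L^2(\Ga)}^2 + C h^2 \|\nb v\|_{L^2(\Om)}^2 .
\end{align*}
Finally the trace theorem gives $\|\tr{v}\|_{L^2(\Ga)} \le C \|v\|_{\HoB}$, so both terms are bounded by $C h \|v\|_{\HoB}^2$, which is the claim after taking square roots.

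The step I expect to demand the most care is the geometric one: checking uniformly in $h$ that $\coatl$ is contained in a tubular neighbourhood of thickness order $h$ and that the normal-coordinate change has Jacobian bounded away from $0$ and $\infty$. This rests on the $C^2$ regularity of $\Ga$, the quasi-uniformity of the triangulations, and the order-$h^2$ bound on the lift; once these are secured the analytic part is the routine one-dimensional computation above. Since the statement is precisely \cite[Lemma~6.3]{ElliottRanner}, I would in fact simply cite it, but this is the route by which I would reprove it.
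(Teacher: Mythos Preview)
The paper does not give its own proof of this lemma: it simply quotes the result from \cite[Lemma~6.3]{ElliottRanner} and uses it as a black box. Your closing remark already anticipates this --- citing the result is exactly what the authors do. The argument you sketch (tubular coordinates, one-dimensional fundamental theorem of calculus plus Cauchy--Schwarz in the normal direction, then the trace inequality) is correct and is the standard route to such boundary-layer estimates; it is essentially how the cited lemma is established.
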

\ebk 

\subsection{Purely second-order wave equation}
\subsubsection{Geometric errors}

The bilinear forms $a$ and $a_h$, from \eqref{eq:bilinear forms} and \eqref{eq:bilinear forms - discrete}, satisfy the following geometric approximation estimate.
\begin{lemma}[{\cite[Lemma~3.9]{dynbc}}]
	\label{lemma:geometric errors}
	\bbk For the bilinear forms \eqref{eq:bilinear forms} and their discrete counterparts \eqref{eq:bilinear forms - discrete} we have the estimates, \ebk for any $v_h,w_h \in S_h$, 
	\begin{align*}
		& |a(v_h^\ell,w_h^\ell) - a_h(v_h,w_h)| \leq \ Ch \|\nb v_h^\ell\|_{L^2(B_h^\ell)} \, \|\nb w_h^\ell\|_{L^2(B_h^\ell)}
		\\ &\qquad + C h^2 \bigg(\|\nb v_h^\ell\|_{L^2(\Om)} \, \|\nb w_h^\ell\|_{L^2(\Om)} + \beta \|\nb_\Ga v_h^\ell\|_{L^2(\Ga)} \, \|\nb_\Ga w_h^\ell\|_{L^2(\Ga)} + \kappa \|\gamma v_h^\ell\|_{L^2(\Ga)} \, \|\gamma w_h^\ell\|_{L^2(\Ga)}\bigg),
		\\
		& |m(v_h^\ell,w_h^\ell) - m_h(v_h,w_h)| \leq \ Ch \|v_h^\ell\|_{L^2(B_h^\ell)} \, \|w_h^\ell\|_{L^2(B_h^\ell)} \\
		&\qquad\qquad\qquad\qquad\qquad\quad + C h^2 \Big(\|v_h^\ell\|_{L^2(\Om)} \, \|w_h^\ell\|_{L^2(\Om)} +\mu \|\gamma v_h^\ell\|_{L^2(\Ga)} \, \|\gamma w_h^\ell\|_{L^2(\Ga)}\Big) .
	\end{align*}
	
\end{lemma}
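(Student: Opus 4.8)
The plan is to transport every integral defining $a_h$ and $m_h$ from the discrete sets $\Omh$ and $\Ga_h$ onto the exact $\Om$ and $\Ga$ by the change of variables induced by the lift, so that after the transformation each integrand is a product of the \emph{lifted} functions $v_h^\ell,w_h^\ell$ times a purely geometric factor. Writing $a-a_h$ and $m-m_h$ as sums of their bulk and surface parts, I would estimate each part by inserting pointwise bounds on these geometric factors and then applying the Cauchy--Schwarz inequality: the products of lifted functions produce precisely the norms on the right-hand sides, while the size of the geometric factors produces the powers of $h$.

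For the bulk contributions I would use the elementwise $C^1$ diffeomorphism $\smtr\colon\Omh\to\Om$ of \eqref{eq:bulk mapping}. Since $v_h=v_h^\ell\circ\smtr$, the chain rule gives $\nb v_h=(\Dsmtr)^\intercal\,(\nb v_h^\ell)\circ\smtr$, and the substitution $y=\smtr(x)$ with $\d x=|\det\Dsmtr|^{-1}\,\d y$ yields
\begin{equation*}
\int_{\Omh}\nb v_h\cdot\nb w_h\,\d x-\int_{\Om}\nb v_h^\ell\cdot\nb w_h^\ell\,\d x=\int_{\Om}\nb v_h^\ell\cdot(\mathcal{A}_h-\Id)\,\nb w_h^\ell\,\d x,
\end{equation*}
with $\mathcal{A}_h=|\det\Dsmtr|^{-1}\,\Dsmtr\,(\Dsmtr)^\intercal$, and likewise the bulk mass difference equals $\int_\Om v_h^\ell w_h^\ell\,(|\det\Dsmtr|^{-1}-1)\,\d x$. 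The geometric input, which I would take from \cite[Section~4]{ElliottRanner}, is that $\smtr$ is the identity in the interior and only weakly distorted on the layer of elements abutting the boundary, giving the pointwise bound
\begin{equation*}
|\mathcal{A}_h-\Id|+\big||\det\Dsmtr|^{-1}-1\big|\leq C\big(h^2+h\,\chi_{B_h^\ell}\big)\quad\text{on }\Om.
\end{equation*}
Splitting the integral accordingly and using Cauchy--Schwarz produces the first summand $Ch\,\|\nb v_h^\ell\|_{L^2(B_h^\ell)}\|\nb w_h^\ell\|_{L^2(B_h^\ell)}$ together with the $Ch^2\,\|\nb v_h^\ell\|_{L^2(\Om)}\|\nb w_h^\ell\|_{L^2(\Om)}$ term for $a$ and the analogous $Ch^2\,\|v_h^\ell\|_{L^2(\Om)}\|w_h^\ell\|_{L^2(\Om)}$ term for $m$.

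For the surface contributions (the $\beta$- and $\kappa$-terms in $a$ and the $\mu$-term in $m$) I would invoke the standard Dziuk surface calculus \cite{Dziuk88}. The orthogonal lift $\Ga_h\to\Ga$ transforms the surface measure by $\d\sigma_h=(1-\theta_h)\,\d\sigma$ with $\|\theta_h\|_{L^\infty(\Ga)}\leq Ch^2$, and relates the discrete and exact tangential gradients through a matrix $\mathcal{B}_h$ with $\|\mathcal{B}_h-\Id\|_{L^\infty(\Ga)}\leq Ch^2$; both rely on $\Ga$ being $C^2$ and on $\Ga_h$ interpolating $\Ga$ at the nodes. Consequently the $\kappa$- and $\mu$-differences reduce to integrals of $(\gamma v_h^\ell)(\gamma w_h^\ell)\,\theta_h$, and the $\beta$-difference to an integral of $\nb_\Ga v_h^\ell\cdot(\mathcal{B}_h-\Id)\,\nb_\Ga w_h^\ell$ plus a measure-defect term, all of which Cauchy--Schwarz controls by $Ch^2$ with the stated weights. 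Adding the bulk and surface estimates gives both inequalities.

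The crux is the bulk geometric estimate, specifically the bound in which the genuinely first-order deviation of $\mathcal{A}_h$ and $|\det\Dsmtr|^{-1}$ from the identity is confined to the boundary-layer elements $B_h^\ell$, while away from the boundary the deviation is $O(h^2)$. This localized $O(h)$ effect is exactly what generates the $B_h^\ell$-norm term and, combined with the layer estimate of Lemma~\ref{lemma:layer est}, is ultimately responsible for the loss of optimality in the examples with velocity terms. Establishing it requires a careful expansion of $\smtr=F_e\circ F_E^{-1}$ on the curved boundary elements, bounding $\Dsmtr-\Id$ and $\det\Dsmtr-1$ through the $C^2$ geometry of $\Ga$ and the $O(h^2)$ nodal interpolation error of $\Ga_h$; this is carried out in \cite{ElliottRanner}, with the surface analogue in \cite{Dziuk88}. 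Once these pointwise bounds are in hand, the remaining manipulations are routine applications of the Cauchy--Schwarz and H\"older inequalities.
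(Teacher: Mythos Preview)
Your approach is correct and is precisely the argument behind the cited result \cite[Lemma~3.9]{dynbc}, which in turn builds on \cite[Section~4 and Lemma~6.2]{ElliottRanner} for the bulk and on \cite{Dziuk88} for the surface. The paper itself does not give a proof of this lemma; it is stated as a citation, so there is no in-paper proof to compare against, but your sketch reproduces the standard line of reasoning in those references: pull back via $G_h$, identify the geometric weights $\mathcal{A}_h-\Id$ and $|\det DG_h|^{-1}-1$, use that $G_h$ is the identity on interior elements so the first-order defect is supported on the boundary layer $B_h^\ell$, and apply Cauchy--Schwarz.

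One minor refinement: in the Elliott--Ranner construction the map $G_h$ is \emph{exactly} the identity on interior elements (those not intersecting $\Ga_h$), so in fact $\mathcal{A}_h-\Id$ and $|\det DG_h|^{-1}-1$ vanish there and the pointwise bound is simply $Ch\,\chi_{B_h^\ell}$; the global $Ch^2$ term in the statement is then a consequence of the sharper $O(h^2)$ bound one gets on boundary elements after a more careful expansion (or, equivalently, can be viewed as absorbing part of the $O(h)$ layer contribution into a global norm). Either way your splitting recovers the stated inequality.
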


\subsubsection{Error estimates for the Ritz map}
From \cite[Lemma~3.13 and 3.15]{dynbc} we recall the following estimates for the error of the Ritz map.
Note the weaker norm on $\Ga$ for $\beta=0$ due to a lack of boundary regularity of solutions of the Poisson equation with Neumann boundary conditions.

\newcommand{\vr}{v-R_hv}
\begin{lemma}
	\label{lemma: Ritz error est} 
	The error of the Ritz map \eqref{Ritz} corresponding to the bilinear form $a$ from \eqref{eq:bilinear forms}  satisfies the following second-order bounds:
	
	For $\beta =0$:
	$$
	\|\vr\|_{L^2(\Om)} + \|\gamma(\vr)\|_{H^{-1/2}(\Ga)} \leq C h^2 \, \|v\|_{H^2(\Om)} ,
	$$
	where the constant $C$ is independent of $h$  and $v\in H^2(\Omega)$.
	
	For $\beta>0$:
	$$
	\|\vr\|_{L^2(\Om)} +  \|\gamma(\vr)\|_{L^2(\Ga)} \leq C h^2 \, \big( \|v\|_{H^2(\Om)}  + \|\gamma v\|_{H^2(\Gamma)} \big),
	$$
	where the constant $C$ is independent of $h$  and $v\in H^2(\Omega)$ with $\gamma v \in H^2(\Gamma)$, but depends on $\beta>0$.
\end{lemma}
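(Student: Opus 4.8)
The plan is to follow the classical two-step route for elliptic-projection error estimates, adapted to the present \emph{non-conforming} bulk--surface setting: first an energy-norm bound of order $h$ by a C\'ea-type argument, then an Aubin--Nitsche duality argument gaining one power of $h$ to reach the $L^2$ (respectively $H^{-1/2}$) estimate of order $h^2$. For the energy estimate I would set $e_h := \Ritz v - \intpo v \in V_h$ and use the defining relation \eqref{Ritz} to write
\[
\|e_h\|_h^2 = a(v,\lift{e_h}) - a_h(\intpo v, e_h) = a\big(v - \intpol v, \lift{e_h}\big) + \big[a(\intpol v, \lift{e_h}) - a_h(\intpo v, e_h)\big].
\]
The first summand is an interpolation error, bounded by $C h \|v\|_{H^2(\Om)}\,\|e_h\|_h$ through Lemma~\ref{lemma:nonpoly-interpolation} and the norm equivalence \eqref{eq:norm equiv}; the second is a geometric error of order $h^2$ by Lemma~\ref{lemma:geometric errors}. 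Dividing by $\|e_h\|_h$ and adding the interpolation error of $v - \intpol v$ yields the first-order energy bound $\|v - \Ritzl v\|_a \leq C h\,(\|v\|_{H^2(\Om)} + \beta\|\gamma v\|_{H^2(\Ga)})$, valid for $\beta \geq 0$, where $\|\cdot\|_a$ denotes the energy norm induced by $a$.

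For the $L^2$ estimate when $\beta>0$, I would set $e := v - \Ritzl v$ and introduce the dual problem: find $z\in V$ with $a(w,z) = \int_\Om e\,w\,\d x + \int_\Ga (\gamma e)(\gamma w)\,\d\sigma$ for all $w\in V$, which is well-posed by coercivity and symmetry of $a$. Since $\Ga$ is $C^2$ and $\beta>0$, elliptic regularity in the bulk and on the surface gives $z\in H^2(\Om)$ with $\gamma z\in H^2(\Ga)$ and $\|z\|_{H^2(\Om)} + \|\gamma z\|_{H^2(\Ga)} \leq C(\|e\|_{L^2(\Om)} + \|\gamma e\|_{L^2(\Ga)})$. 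Taking $w=e$ (so that $a(e,z) = \|e\|_{L^2(\Om)}^2 + \|\gamma e\|_{L^2(\Ga)}^2$) and splitting $z = (z-\intpol z) + \intpol z$, the first part is $O(h^2)$ by the energy bound applied to both arguments, while the second part, after inserting \eqref{Ritz}, becomes precisely a geometric error,
\[
a(e, \intpol z) = -\big[a\big((\Ritz v)^\ell,(\intpo z)^\ell\big) - a_h(\Ritz v,\intpo z)\big],
\]
to which Lemma~\ref{lemma:geometric errors} applies.

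I expect the main obstacle to be that this geometric term carries a contribution of order only $h$, namely $C h\,\|\nb (\Ritz v)^\ell\|_{L^2(B_h^\ell)}\,\|\nb(\intpo z)^\ell\|_{L^2(B_h^\ell)}$, concentrated on the boundary layer $B_h^\ell$. The key to recovering full order is the layer estimate of Lemma~\ref{lemma:layer est}: since $v,z\in H^2(\Om)$, one has $\|\nb v\|_{L^2(B_h^\ell)} + \|\nb z\|_{L^2(B_h^\ell)} \leq C h^{1/2}(\|v\|_{H^2(\Om)} + \|z\|_{H^2(\Om)})$, and combining this with the first-step energy bound and Lemma~\ref{lemma:nonpoly-interpolation} gives $\|\nb (\Ritz v)^\ell\|_{L^2(B_h^\ell)} \leq C h^{1/2}\|v\|_{H^2(\Om)}$ and $\|\nb(\intpo z)^\ell\|_{L^2(B_h^\ell)} \leq C h^{1/2}\|z\|_{H^2(\Om)}$. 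Hence the $h$-order term is in fact $C h\cdot h^{1/2}\cdot h^{1/2} = C h^2$, and collecting all contributions yields $\|e\|_{L^2(\Om)}^2 + \|\gamma e\|_{L^2(\Ga)}^2 \leq C h^2 (\|v\|_{H^2(\Om)} + \|\gamma v\|_{H^2(\Ga)})\,(\|e\|_{L^2(\Om)} + \|\gamma e\|_{L^2(\Ga)})$; dividing out the last factor gives the $\beta>0$ claim.

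Finally, for $\beta=0$ the surface part of $a$ has no tangential gradient, so the dual solution is only $z\in H^2(\Om)$ with trace $\gamma z\in H^{3/2}(\Ga)$ (a Robin problem), and $H^2(\Ga)$-regularity of the trace is unavailable. The bulk $L^2$ estimate then proceeds exactly as above, using only $\|z\|_{H^2(\Om)}$, whereas the surface error must be measured in the weaker norm $\|\gamma e\|_{H^{-1/2}(\Ga)} = \sup_{\phi\neq 0}\langle \gamma e,\phi\rangle / \|\phi\|_{H^{1/2}(\Ga)}$. For each $\phi\in H^{1/2}(\Ga)$ I would solve the dual problem $a(w,z_\phi) = \int_\Ga \phi\,(\gamma w)\,\d\sigma$, which by elliptic regularity satisfies $\|z_\phi\|_{H^2(\Om)} \leq C\|\phi\|_{H^{1/2}(\Ga)}$; repeating the duality-plus-layer argument then gives $\langle \gamma e,\phi\rangle \leq C h^2\|v\|_{H^2(\Om)}\|\phi\|_{H^{1/2}(\Ga)}$, and taking the supremum over $\phi$ produces the stated $H^{-1/2}(\Ga)$ bound.
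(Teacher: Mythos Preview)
The paper does not give its own proof of this lemma; it simply recalls the result from \cite[Lemma~3.13 and 3.15]{dynbc}. Your outline reproduces essentially the argument of that reference: a C\'ea-type energy bound followed by an Aubin--Nitsche duality, with the non-conformity of the bulk--surface method handled by splitting off the geometric defect $a(\cdot^\ell,\cdot^\ell)-a_h(\cdot,\cdot)$ and controlling its $O(h)$ boundary-layer contribution via Lemma~\ref{lemma:layer est}. The treatment of the $\beta=0$ case --- passing to the $H^{-1/2}(\Gamma)$ norm on the surface because the Robin-type dual problem only yields $z\in H^2(\Omega)$ without additional trace regularity --- is likewise the approach of \cite{dynbc}. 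One small imprecision: in your first step you call the geometric term ``of order $h^2$'', but at that stage you only have (and only need) $Ch\,\|e_h\|_h$ unless you already invoke the layer estimate; this does not affect the conclusion, since the energy bound is $O(h)$ either way.
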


\subsubsection{Proof of Theorem~\ref{theorem: semi-discrete error bound: pure second}}
For the proof, we simply apply the abstract results from the Section~\ref{section:abstract error analysis} and use the estimates for the error of the Ritz map, the interpolation error and the geometric errors from above.

We start by considering the error of the Ritz map in the dual norm.
For $\beta = 0$ and $v \in H^2(\Om)$, it follows from \cite[Section~3.5.2]{dynbc} that
\begin{subequations}
	\begin{align} \label{eq:Ritz beta = 0}
	\normVadual{v - \Ritzl v} \leq \big( \norm{v - \Ritzl v}_{L^2(\Om)}^2 + \norm{v - \Ritzl v}_{H^{-1/2}(\Ga)}^2 \big)^\half  \leq c h^2
	\end{align}
	and, for $\beta > 0$ and for $v \in H^2(\Om)$ with $\gamma v \in H^2(\Ga)$, 
	\begin{align} \label{eq:Ritz beta > 0}
	\normVadual{v - \Ritzl v} \leq \normH{v - \Ritzl v} = \big( \norm{v - \Ritzl v}_{L^2(\Om)}^2 + \norm{v - \Ritzl v}_{L^2(\Ga)}^2 \big)^\half \leq c h^2, 
	\end{align}
\end{subequations}
where we used Lemma~\ref{lemma: Ritz error est} in the last inequality for both estimates.

Now we can further estimate the upper bound for the defect from Lemma~\ref{lemma:abstract defect bound}:
The Ritz map error for $\ddot u \in H^2(\Om)$ is bounded due to our previous arguments, the first-order terms do not appear since $\sobfBsymb = 0$ and hence $\soB = 0$, and the $L^2$ norm error estimate of the interpolation Lemma~\ref{lemma:nonpoly-interpolation} yields
\begin{equation*}
\normVadual{ \sosrc - I_h \sosrc } \leq \normH{ \sosrc - I_h \sosrc } \leq c h^2 .
\end{equation*}
The geometric errors can be bounded as follows: Combining Lemma~\ref{lemma:geometric errors} and \ref{lemma:layer est} yields $\DeltaipH{w_h}{v_h} \leq  c h^2  \norm{w_h^\ell}_{H^1(\Om)}\norm{v_h^\ell}_{H^1(\Om)}$.
Therefore we obtain with
\eqref{eq:norm equiv} that
\begin{equation}
\label{eq:delta m estimate}
\normVhdual{\DeltaipH{w_h}{\cdot}} = \sup_{v_h \in V_h} \frac{\DeltaipH{w_h}{v_h}}{\normVh{v_h}} 
\leq  c h^2  \normVh{w_h}.
\end{equation}
Since, first, the norm equivalence \eqref{eq:norm equiv} and the interpolation estimate from Lemma~\ref{lemma:nonpoly-interpolation} yield for  $f \in H^2(\Om) \cap V $
\begin{align*}
\normVh{\sosrch} \leq   \normVa{\sosrc - I_h \sosrc} + \normVa{\sosrc} \leq ch + c ,
\end{align*}
and, second, $\Ritz \in \mathcal L (V,V_h)$ and $\ddot u \in V$, the geometric error is bounded by
\begin{align*}
\normVhdual{\DeltaipH{\Ritz \ddot u}{\cdot}} +	\normVhdual{\DeltaipH{\sosrch}{\cdot}}  \leq c h^2 .
\end{align*}
Altogether, we showed that under the given assumptions $\normVhdual{\defect} \leq c h^2$ for $\beta \geq 0$.

For the errors in the initial data $\eps_0$ only the first line of \eqref{eq:initial data error} is present, hence Ritz map and interpolation error estimates yields $\eps_0 \leq c h^2$.

Finally, for $\beta > 0$, we apply \eqref{eq:Ritz beta > 0} for the error in the Ritz map and  $\normVhdual{\defect} \leq c h^2$  to the right-hand side of \eqref{eq:abstract estimate energy est} and obtain the stated, optimal-order convergence bound
\begin{equation*}
\normH{u \t - \lift u_h \t} \leq C h^2 .		
\end{equation*}
If $\beta = 0$, then observe that for $e = u - \lift u_h $ and $e_h = \Ritz u  -  u_h$
\begin{align*}
\norm{e}_{L^2(\Om)} + \norm{e}_{H^{-1/2}(\Ga)}
\leq  \norm{u - \Ritzl u}_{L^2(\Om)} + \norm{u - \Ritzl u}_{H^{-1/2}(\Ga)} + C \normHh{e_h}.
\end{align*}
The errors of the Ritz map are bounded by \eqref{eq:Ritz beta = 0} and $\normHh{e_h}$ satisfies \eqref{eq:estimate eh}.
Therefore, we obtain from the estimate for the defect the optimal-order convergence bound
\begin{equation*}
\norm{u \t - \lift u_h \t}_{L^2(\Om)} +  \norm{u \t - \lift u_h \t}_{H^{-1/2}(\Ga)} \leq C h^2 .		
\end{equation*}
\qed 

\subsection{Advective \BC}
\subsubsection{Geometric errors}

The bilinear form containing the advective terms \eqref{eq:bilinear forms - advective} and \eqref{eq:bilinear forms - advective discrete} satisfy the following geometric approximation estimate, shown in \cite[Lemma~7.3]{Hip17}.
\begin{lemma}
	\label{lemma:geometric-error-advective}
	For sufficiently small $h \leq h_0$, and for any $v_h,w_h \in S_h$ we have the estimate
	\begin{align*}
	|b(w_h^\ell,v_h^\ell) - b_h(w_h,v_h)|  
	\leq &\ c h \bigg(
	\norm{\advB}_{L^\infty(B_h^\ell)} \|\nb w_h^\ell\|_{L^2(B_h^\ell)} \, \|v_h^\ell\|_{L^2(B_h^\ell)} 
	+  \wdampB \|w_h^\ell\|_{L^2(B_h^\ell)}\|v_h^\ell\|_{L^2(B_h^\ell)} \bigg) \\ 
	&\ + c h^2 \bigg(  \|\nbg w_h^\ell\|_{L^2(\Ga)}\|\ga v_h^\ell\|_{L^2(\Ga)} +   \|\ga w_h^\ell\|_{L^2(\Ga)} \, \|\ga v_h^\ell\|_{L^2(\Ga)} \bigg) ,
	\end{align*}
	where the constant $c$ is independent of $h$, but depends on the $L^\infty$ norms of the coefficient functions $\wdampB, \wdampS, \advB$ and $\advS$.
\end{lemma}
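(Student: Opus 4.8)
The plan is to reduce everything to the two families of geometric estimates already available for bulk--surface finite elements — the boundary-layer estimates of Elliott--Ranner type (as in Lemma~\ref{lemma:geometric errors}) for the bulk, and Dziuk's surface estimates \cite{Dziuk88} — after transporting the continuous integrals back to the discrete domain. First I would split $b(w_h^\ell,v_h^\ell)-b_h(w_h,v_h)$ into its bulk and surface contributions, and each of these further into the zeroth-order damping part (the $\wdampB$, resp.\ $\wdampS$, term) and the first-order advective part (the $\advB\dotr\grad$, resp.\ $\advS\dotr\nbgh$, term). The essential simplification is that the discrete coefficient fields are the exact inverse lifts: by definition of the lift, $\advB^{-\ell}=\advB\circ\smtr$ on $\Omh$ and $\advS^{-\ell}=\advS\circ\Phi$ on $\Ga_h$, where $\smtr$ is the bulk map \eqref{eq:bulk mapping} and $\Phi\colon\Ga_h\to\Ga$ the surface lift. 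Consequently, after the change of variables $x=\smtr(\hat x)$ (resp.\ its surface analogue), there is \emph{no} coefficient-approximation error, and only purely geometric factors — the volume/surface Jacobians and the transformation of the gradients — survive.

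Concretely, for the bulk advection term the change of variables gives
\begin{equation*}
\intB \big(\advB\dotr\grad w_h^\ell\big)\,v_h^\ell\,\ddx
= \intBh \big(\advB^{-\ell}\dotr (\Dsmtr)^{-\intercal}\grad w_h\big)\,v_h\,\abs{\det\Dsmtr}\,\d\hat x ,
\end{equation*}
so that the difference with the discrete term $\intBh(\advB^{-\ell}\dotr\grad w_h)v_h$ is governed by the matrix $(\Dsmtr)^{-\intercal}\abs{\det\Dsmtr}-\Id$, and the damping term likewise by the scalar $\abs{\det\Dsmtr}-1$. Both vanish identically off the layer of boundary elements and, since $\smtr-\Id=\Landau(h^2)$ there while differentiation costs one power of $h$, are of size $\Landau(h)$ on $\coatl$; this is exactly the mechanism behind the bulk estimates of Lemma~\ref{lemma:geometric errors}. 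Inserting these bounds, pulling the coefficient out in $L^\infty(\coatl)$, and using the change-of-variables norm equivalences $\norm{\cdot}_{L^2(\Omh)}\sim\norm{\cdot^\ell}_{L^2(\Om)}$ (uniformly in $h$), produces precisely the two $\Landau(h)$ terms supported on $\coatl$.

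For the surface terms one argues analogously with the surface lift $\Phi$, now invoking Dziuk's geometric estimates \cite{Dziuk88}: the surface-measure ratio between $\Ga$ and $\Ga_h$ deviates from $1$ by $\Landau(h^2)$, and the transformation relating $\nbgh w_h$ to $\gradS w_h^\ell$ differs from the discrete tangential projection by $\Landau(h^2)$ in the relevant combination. This yields the two $\Landau(h^2)$ contributions carrying the full $\LtS$ norms, the damping term needing only the measure ratio and the advective term also the tangential-gradient factor.

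The main obstacle is the order of the surface advective estimate. Since $\nu-\nu_h=\Landau(h)$, the transformation of a \emph{single} tangential gradient differs from the discrete tangential projection by a term that is naively only $\Landau(h)$, one power short of the claimed $h^2$. The gain rests on a Dziuk-type cancellation: the leading $\Landau(h)$ part of this transformation error is directed along the normal, and it drops out because $\nbgh w_h$ is tangent to $\Ga_h$ and is tested, through $\advS^{-\ell}$, only against tangential directions (in the continuous form $\advS$ enters $b$ solely through its contraction with the tangent vector $\gradS w$). Making this cancellation rigorous — tracking the interaction of $\advS^{-\ell}$ with the normals $\nu$, $\nu_h$ and the surface distance function, all of which enter at the relevant order — is the delicate step; the bulk boundary-layer bounds, by contrast, are direct consequences of $\smtr-\Id=\Landau(h^2)$. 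Collecting the bulk and surface contributions and absorbing the $L^\infty$ coefficient bounds into the constant then gives the stated estimate.
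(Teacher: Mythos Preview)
The paper does not prove this lemma itself; it simply cites \cite[Lemma~7.3]{Hip17}. Your sketch follows exactly the standard route used there and in the proof of Lemma~\ref{lemma:geometric errors} (i.e.\ \cite[Lemma~3.9]{dynbc}): split into bulk and surface parts, transform via $\smtr$ respectively the closest-point projection, and track the geometric factors $\abs{\det\Dsmtr}-1$, $(\Dsmtr)^{-\intercal}-\Id$ on the boundary layer $\coatl$ for the bulk, and the surface measure ratio together with the tangential-gradient transformation for the surface. So the approach matches.

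One clarification on the ``delicate step'': your description of the surface cancellation is slightly misattributed. The $\Landau(h)$ part of the transformation error is $-(\nu\cdot\nbgh w_h)\,\nu$, i.e.\ along the \emph{exact} normal $\nu$, and it does \emph{not} vanish because $\nbgh w_h$ is tangent to $\Ga_h$ (that only gives $\nu\cdot\nbgh w_h=(\nu-\nu_h)\cdot\nbgh w_h=\Landau(h)$, one order short). The cancellation is that this normal component is dotted with $\advS^{-\ell}$, and since in the continuous form only the $\Ga$-tangential part of $\advS$ enters (as you note), one may replace $\advS$ by $P\advS$, whence $\advS^{-\ell}\cdot\nu=0$ exactly. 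That is the crux; the rest of your outline is fine.
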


We now give more details on the approximative vector fields discussed in Remark~\ref{remark:discrete vector fields - I}.
\begin{remark}
	\label{remark:interpolated-vector-fields}
	One way to avoid computing the inverse lift of the \bbk continuous \ebk vector fields is to use their interpolations $\advBh = \widetilde{I}_h \advB$ and $\advSh = \widetilde{I}_h \advS$ instead. 
	
	Then the above geometric approximation estimate of Lemma~\ref{lemma:geometric-error-advective} holds, using the interpolation error estimate, see the proof of \cite[Lemma~7.3]{Hip17}. 
	The quasi-monotonicity of $b_h$ is shown using the assumptions which guarantee that the original bilinear form $b$ is monotone (i.e.~the conditions in \eqref{eq:advective eq coefficient assumptions}), by proving $0 \leq \cqmbh \leq c h$ below, meaning that the stability estimate for the semi-discrete equation \eqref{eq:semi-discrete stability of error with velocity} holds with an exponent which is almost zero.
	
	First note that \bdh for a differentiable vector field $F \colon \Omega \to \R^2$ and a differentiable coordinate transformation $G \colon \Omega' \to \Omega$ from some other domain $\Om' \subset \R^2$ to $\Omega$, \edh it follows by the chain rule that
	\begin{equation}
	\label{eq:divergence calculation}
		\begin{aligned}
			\div (F \circ G) = \sum_{i=1,2} e_i^T D(F \circ G) e_i 
			& = \sum_{i=1,2} e_i^T (DF \circ G) DG e_i 
			\\
			&= (\div F) \circ G  + \sum_{i=1,2} e_i^T \big ( (DF \circ G) (DG - I)   \big )e_i ,
		\end{aligned}
	\end{equation}
	where $e_i \in \R^d$ denotes unit vector along the $i$th coordinate axis.
	Now, \bbk by setting $\Om'=\Om_h$ and using \eqref{eq:divergence calculation}, \ebk let $G_h \colon \Omega_h \to \Omega$ the smooth homeomorphism such that the lift of a function $v_h \colon \Omega_h \to \R$ is given by $v_h^\ell = v \circ G_h$, \bbk cf.~\eqref{eq:bulk mapping}. \ebk 
	Then we obtain that the divergence of the inverse lift of the vector field $\advB$ is given by
	\begin{align}\label{eq:div-of-unlift}
	\div \advB^{-\ell} = \div (\advB \circ G_h) 
	= (\div \advB)^{-\ell} + \sum_{i=1,2} e_i^T \big ( (D \advB)^{-\ell} (DG_h - I)   \big )e_i  .
	\end{align}
	
	Recall that the bilinear form $b$ is monotone due to the conditions \eqref{eq:advective eq coefficient assumptions}.
	However, the bulk condition
	\begin{equation}
	\label{eq:bulk condition recalled}
	0 \leq \min_{x\in \Omega} \Big( \wdampB - \frac 12 \div \advB(x) \Big) 
	= \wdampB - \frac 12 \max_{x\in \Omega} \div \advB(x) ,
	\end{equation}
	is not necessarily satisfied for the interpolated vector fields (or analogously for the surface condition).
	Instead we have that $\sobfBhsymb + \cqmbh \ipHhsymb$ is monotone for
	\begin{equation*}
	\cqmbh = - \min_{x\in \Om_h} \Big( \wdampB - \frac 12 \div( \intpo \advB) (x) \Big) = - \wdampB + \frac 12 \max_{x\in \Om_h} \div( \intpo \advB) (x) . 
	\end{equation*}
	If $\cqmbh$ is negative then the semi-discrete bilinear form  $\sobfBhsymb$ is monotone, and $\sobfBhsymb + \cqmbh \ipHhsymb\geq 0$ holds with $\cqmbh=0$.
	Hence we can assume $\cqmbh \geq 0$ and by \eqref{eq:bulk condition recalled} we have
	\begin{align*}
	\cqmbh \leq &\ \cqmbh + \wdampB - \frac 12 \max_{x\in \Omega} \div \advB(x) \\
	\leq &\ - \wdampB + \frac 12 \max_{x\in \Om_h} \div( \intpo \advB) (x) + \wdampB - \frac 12 \max_{x\in \Omega} \div \advB(x) \\
	=  &\  \frac 12 \max_{x\in \Om_h} \div( \intpo \advB) (x) - \frac 12 \max_{x\in \Om_h} (\div \advB)^{-\ell}(x)  \\
	\leq &\ \frac 12  \max_{x\in \Om_h}\left|  \div( \intpo \advB) (x)-   (\div \advB)^{-\ell}(x) \right| 
	\\
	\leq & \ \norm{ \div( \intpo \advB) - (\div \advB)^{-\ell}}_{L^\infty(\Om_h)} .
	\end{align*}
	Using \eqref{eq:div-of-unlift} and the interpolation error estimate yields
	\begin{align*}
	\norm{  \div( \intpo \advB) - (\div \advB)^{-\ell}}_{L^\infty(\Om_h)} 
	&\leq  \norm{\div(\intpo \advB -  \advB^{-\ell})}_{L^\infty(\Om_h)} + C \norm{(D \advB)^{-\ell}}_{L^\infty(\Om_h)} \norm{DG_h - I}_{L^\infty(\Om_h)}
	\\
	&\leq  \norm{\intpo \advB -  \advB^{-\ell}}_{W^{1,\infty}(\Om_h)} + C \norm{D \advB}_{L^\infty(\Om)} \norm{DG_h - I}_{L^\infty(\Om_h)}
	\\
	& \leq c h \norm{\advB}_{W^{2,\infty}(\Om_h)} .
	\end{align*}
	Altogether we proved that $0 \leq \cqmbh \leq c h$ for interpolated bulk vector fields.
	
	Analogously, the surface condition in \eqref{eq:advective eq coefficient assumptions} might also fail for the interpolated vector fields. Repeating the argument above for this case, we have that
	\begin{equation*}
	\cqmbh = \max \left \{ - \min_{x\in \Om_h} \Big( \wdampB - \frac 12 \div( \intpo \advB) (x) \Big), - \min_{x\in \Ga_h} \Big( \wdampS - \frac 12 \big ((\nu \cdot \intpo \advB) (x) - \divS (\intpo \advS ) (x) \big )\right \}. 
	\end{equation*}
	also satisfies the bounds $0 \leq \cqmbh \leq c h .$
	
	Therefore, with interpolated vector fields, Proposition~\ref{proposition: stability of error with velocity} and hence Theorem~\ref{theorem: semi-discrete error bound: advective} holds with a constants which grow like $e^{\cqmbh T}=e^{c h T}$ in the final time $T$.
\end{remark}

\subsubsection{Proof of Theorem~\ref{theorem: semi-discrete error bound: advective}}

We proceed analogously to the proof of Theorem~\ref{theorem: semi-discrete error bound: pure second}.

First we prove the case with advection in the bulk and on the boundary (a), and then prove the result without bulk advection, i.e.~with $\advB=0$, (b).

Note that the error estimate \eqref{eq:Ritz beta > 0} for the Ritz map still applies in both situations.

(a) In order to show $\normVhdual{\defect} \leq c h^{3/2}$, it is only left to consider the first-order terms from Lemma~\ref{lemma:abstract defect bound} containing $\soB$. 
The other terms were already treated in the proof of Theorem~\ref{theorem: semi-discrete error bound: pure second}.

To estimate  $\normVadual{\soB (\Ritzl \dot u - \dot u)}$ for $\sobfBsymb$ defined in \eqref{eq:bilinear forms - advective}, we use 
\begin{align*}
\sobfB{w}{v} \leq c \normH{w} \normVa{v}, \qquad w \in H, \: v \in V,
\end{align*}
which follows from integration by parts and the assumptions \eqref{eq:advective eq coefficient assumptions} on the coefficient functions.
Therefore, we have by definition of the dual norm and \eqref{eq:Ritz beta > 0},
\begin{equation}
\label{eq: Ritz map B dual error - advective}
\normVadual{\soB (\Ritzl \dot u - \dot u)} \leq  c  \normH{\Ritzl \dot u - \dot u} \leq c h^2 .
\end{equation}

For an upper bound for $\DeltasobfBsymb$, we use the geometric estimates stated in Lemma~\ref{lemma:geometric-error-advective}.
Applying Lemma~\ref{lemma:layer est} to further estimate the boundary layer norms for $\lift v_h$ then yields
\begin{equation}
\label{eq:delta b estimate - 3/2 order}
\begin{aligned}
\abs{\DeltasobfB{w_h}{v_h}} 
& \leq c h^{3/2} \norm{\lift w_h}_{H^1(\Om)} \norm{\lift v_h}_{H^1(\Om)} 
+ c h^2 \norm{\lift w_h}_{H^1(\Ga)} \norm{\lift v_h}_{H^1(\Ga)} 
\\
&\leq c h^{3/2} \normVa{\lift w_h} \normVa{\lift v_h} .
\end{aligned}
\end{equation}
Therefore, the geometric error for $\sobfBsymb$ converges with
\begin{align*}
\normVadual{\DeltasobfB{\Ritz \dot u}{\cdot}} \leq c h^{3/2}.
\end{align*}

Altogether, we obtain for a sufficiently small $h \leq h_0$
\begin{equation*}
\normVhdual{\defect}  \leq C h^{3/2},
\end{equation*}
with a constant $C$ independent of $h$ but depending on Sobolev norms of the solution $u$ (and also its time derivatives).

For the errors in the initial data \eqref{eq:initial data error} we now have $\eps_0 \leq c h^{3/2}$, by similar arguments as above: using the bound \eqref{eq: Ritz map B dual error - advective} and the geometric estimate \eqref{eq:delta b estimate - 3/2 order}, and Ritz map and interpolation error estimates.

We again recall that the error $u - u_h^\ell$ was estimated in terms of the defect and errors in the initial data \eqref{eq:abstract estimate}.
The combination of this estimate with the above results yields the convergence bound:
\begin{equation*}
\normH{u \t - \lift u_h \t} \leq C h^{3/2}	.
\end{equation*}

(b) If there is no advection in the bulk, i.e.~$\advB=0$, clearly the first term in the right-hand side of the estimate in Lemma~\ref{lemma:geometric-error-advective} vanishes, hence, using Lemma~\ref{lemma:layer est}, we have $\Landau (h^2)$ estimate in \eqref{eq:delta b estimate - 3/2 order}. Then, by the same techniques as before we then obtain the defect estimate $\normVhdual{\defect}  \leq C h^2$ and initial data error $\eps_0 \leq c h^2$, and hence the optimal-order convergence bound:
\begin{equation*}
\normH{u \t - \lift u_h \t} \leq C h^2 .
\end{equation*}
\qed 

\subsection{\Sdamp\ \dynBC}

\subsubsection{Geometric errors}

Since the bilinear forms $b$ and $b_h$ defined in \eqref{eq:bilinear forms - strong damping} and \eqref{eq:bilinear forms - strong damping discrete} contain the same terms as $a$ and $a_h$,
the following geometric approximation estimate is an immediate consequence of Lemma~\ref{lemma:geometric errors}.

\begin{lemma}
	\label{lemma:geometric-error-strong-damping}
	For sufficiently small $h \leq h_0$, and for any $v_h,w_h \in S_h$ we have the estimates
	\begin{align*}
	|b(w_h^\ell,v_h^\ell) - b_h(w_h,v_h)| \leq &\ c h \|\nb w_h^\ell\|_{L^2(B_h^\ell)} \|\nb v_h^\ell\|_{L^2(B_h^\ell)} + c h^2 \|\nb_\Ga w_h^\ell\|_{L^2(\Ga)} \|\nb_\Ga v_h^\ell\|_{L^2(\Ga)} ,
	\end{align*}
	where the constant $c$ is independent of $h$, but depends on $d_\Om$ and $d_\Ga$.
\end{lemma}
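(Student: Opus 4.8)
The plan is to exploit the structural observation already flagged in the text preceding the statement: the form $b$ in \eqref{eq:bilinear forms - strong damping} is built from exactly one bulk gradient integral and one surface gradient integral, which are — up to the scalar factors $d_\Om,d_\Ga$ and with the zero-order term switched off — the very integrals making up $a$ in \eqref{eq:bilinear forms}. Concretely, I would introduce the auxiliary bilinear form $\tilde a(w,v) = \int_\Om \nb w\cdot\nb v\,\d x + (d_\Ga/d_\Om)\int_\Ga \nbg w\cdot\nbg v\,\d\sigma$, which is precisely the form $a$ of \eqref{eq:bilinear forms} with the choices $\beta = d_\Ga/d_\Om$ and $\kappa=0$, together with its discrete counterpart $\tilde a_h$ defined as in \eqref{eq:bilinear forms - discrete}. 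Since $b = d_\Om\,\tilde a$ and $b_h = d_\Om\,\tilde a_h$ hold identically, one has $b(w_h^\ell,v_h^\ell) - b_h(w_h,v_h) = d_\Om\big(\tilde a(w_h^\ell,v_h^\ell) - \tilde a_h(w_h,v_h)\big)$, so the entire claim reduces to invoking Lemma~\ref{lemma:geometric errors}; no new geometric estimate has to be derived.

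Carrying this out, I would apply Lemma~\ref{lemma:geometric errors} to $\tilde a$ (with $\beta = d_\Ga/d_\Om$, $\kappa = 0$) and multiply the resulting inequality by $d_\Om$, which gives
\[
|b(w_h^\ell,v_h^\ell) - b_h(w_h,v_h)| \leq C d_\Om\, h \,\|\nb w_h^\ell\|_{L^2(\coatl)}\|\nb v_h^\ell\|_{L^2(\coatl)} + C d_\Om\, h^2 \|\nb w_h^\ell\|_{L^2(\Om)}\|\nb v_h^\ell\|_{L^2(\Om)} + C d_\Ga\, h^2 \|\nbg w_h^\ell\|_{L^2(\Ga)}\|\nbg v_h^\ell\|_{L^2(\Ga)}.
\]
Absorbing $d_\Om,d_\Ga$ into a generic constant $c$ then reproduces three terms, of which the first (the $O(h)$ boundary-layer bulk term) and the last (the $O(h^2)$ surface term) are exactly those in the statement.

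The only genuine point to address is therefore the middle, full-bulk $O(h^2)$ term $c\,h^2 \|\nb w_h^\ell\|_{L^2(\Om)}\|\nb v_h^\ell\|_{L^2(\Om)}$, which is absent from the stated two-term bound; this bookkeeping discrepancy is the one thing to justify rather than a real difficulty. I would argue that it is harmless: it cannot be folded pointwise into the boundary-layer term (the norms live on different sets), but it is of strictly higher order and is dominated by the leading $O(h)$ contribution in every application of the lemma, where both factors carry $H^1$-type norms controlled by $\normVa{\cdot}$ and hence stay $O(1)$. Consequently it never affects the first-order rate obtained for the strongly damped problem, and one may legitimately either drop it — recording that it is subsumed — or retain it and carry it through the defect estimate without changing the final bound. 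With that remark the clean statement of the lemma follows, all the underlying geometry being already encapsulated in Lemma~\ref{lemma:geometric errors}.
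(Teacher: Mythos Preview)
Your approach is exactly what the paper does: it states the lemma as an immediate consequence of Lemma~\ref{lemma:geometric errors}, via the observation that $b$ and $b_h$ are built from the same gradient integrals as $a$ and $a_h$ (up to the constants $d_\Om$, $d_\Ga$). Your observation about the omitted full-bulk $O(h^2)$ term is correct and the paper simply drops it from the stated two-term bound; as you rightly note, this is harmless in every application since that term is dominated by the $O(h)$ boundary-layer contribution whenever both factors are controlled in $\normVa{\cdot}$.
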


\subsubsection{Proof of Theorem~\ref{theorem: semi-discrete error bound: strong damping}}

We proceed analogously to the previous proofs.

We prove the general case and the case with coefficients satisfying $\beta = d_\Ga / d_\Om$ separately.

(a) Again, note that the error estimate \eqref{eq:Ritz beta > 0} for the Ritz map still applies in this situation and that it is only left to consider the first-order terms from Lemma~\ref{lemma:abstract defect bound} to prove a defect estimate.

Using $\soB \in \mathcal L(V,\Vdual)$ and by the $\|\cdot\|$ norm error estimate for the Ritz map \cite[Lemma~3.1]{dynbc} we find that for $\dot u \in H^2(\Om) \cap V$
\begin{equation*}
\normVadual{\soB (\Ritzl \dot u - \dot u)} \leq c \normVa{\Ritzl \dot u - \dot u} \leq c h .
\end{equation*}

In addition, the geometric error of $\sobfBsymb$ is bounded as $\Landau (h)$ such that altogether, for a sufficiently small $h \leq h_0$, we have
\begin{equation*}
\normVhdual{\defect} \leq C h ,
\end{equation*}
with a constant $C$ independent of $h$ but depending on Sobolev norms of the solution $u$ (and also its time derivatives).

For the errors in the initial data \eqref{eq:initial data error} we have $\eps_0 \leq c h$, by similar arguments as for the defect above, and using Ritz map and interpolation error estimates.

Again recalling the error estimate \eqref{eq:abstract estimate}, and combining it with the above inequalities we obtain the stated, convergence bound
\begin{equation*}
\normH{u \t - \lift u_h \t} \leq C h .		
\end{equation*}

(b) In case the ratio of the diffusion and damping coefficients coincide
$$\beta = d_\Ga / d_\Om,$$
the bilinear forms $a$ and $b$ (and the semi-discrete counterparts) also coincide up to a constant and some lower order terms (which are related to $m$), using the definitions \eqref{eq:bilinear forms} and \eqref{eq:bilinear forms - strong damping}, we have
\begin{align*}
b(w,v) = &\ d_\Om \bigg( \int_\Om \nb w \cdot \nb v \ \d x + \frac{d_\Ga}{d_\Om} \int_\Ga \nbg w \cdot \nbg v \ \d \sigma\bigg) \\
= &\ d_\Om \Big( a(w,v) \Big) - d_\Om \kappa \int_\Ga (\ga w) (\ga v) \ \d \sigma .
\end{align*}
In the proof of Lemma~\ref{lemma:abstract defect bound}, in particular in \eqref{eq:defect equation with pairs}, not only the pair for $a$ vanishes due to the definition of the Ritz map \eqref{Ritz}, but the pair for $b$ as well up to a mass term on the boundary, using the identity from above. Therefore, the critical term from part (a) does not appear at all, but instead we have to bound the boundary mass pair, similarly as we have done for \eqref{eq:delta m estimate}, and obtain
\begin{equation*}
\Big| \int_{\Ga_h} (\ga_h \Ritz \dot u) (\ga_h v_h) \ \d \sigma_h - \int_\Ga (\ga \dot u) (\ga v_h^\ell) \ \d \sigma \Big| \leq c h^2 .
\end{equation*}
The rest of the proof is finished as part (a) and yields a defect estimate $\normVhdual{\defect} \leq C h^2$ and initial data error bound $\eps_0 \leq c h^2$, and hence an optimal-order error estimate: 
\begin{equation*}
\normH{u \t - \lift u_h \t} \leq C h^2 .		
\end{equation*}

\qed 

\subsection{Acoustic \BC}

\subsubsection{Geometric and interpolation errors}
In this section, we treat the geometric errors in the bilinear forms form the equation with acoustic boundary conditions. 
Although, the following estimates are a straightforward generalisation of the results from \cite{DziukElliott_L2} and \cite{ElliottRanner}, we present the proofs to avoid any confusion due to the vector valued functions.

\begin{lemma}	\label{lemma:geometric-error-acoustic}
	For sufficiently small $h \leq h_0$, and for any $\vec w_h = (w_h,\omega_h), \vec v_h =(v_h,\psi_h) \in \vec S_h$ we have the estimates
	\begin{align*}
	|m(\vec w_h^\ell,\vec v_h^\ell) - m_h(\vec w_h,\vec v_h)| 
	\leq &\ c h \|w_h^\ell\|_{L^2(B_h^\ell)} \, \|v_h^\ell\|_{L^2(B_h^\ell)} + c h^2 \|\omega_h^\ell\|_{L^2(\Ga)} \, \|\psi_h^\ell\|_{L^2(\Ga)} , 
	\\
	|a(\vec w_h^\ell,\vec v_h^\ell) - a_h(\vec w_h,\vec v_h)| 
	\leq &\ 
	c h \|w_h^\ell\|_{H^1(B_h^\ell)} \, \|v_h^\ell\|_{H^1(B_h^\ell)}  + c h^2 \|\omega_h^\ell\|_{H^1(\Ga)} \, \|\psi_h^\ell\|_{H^1(\Ga)} , 
	\\
	|b(\vec w_h^\ell,\vec v_h^\ell) - b_h(\vec w_h,\vec v_h)| 
	\leq &\ c h^2 \|\ga w_h^\ell\|_{L^2(\Ga)} \, \|\psi_h^\ell\|_{L^2(\Ga)} 
	+ c h^2 \|\omega_h^\ell\|_{L^2(\Ga)} \, \|\ga v_h^\ell\|_{L^2(\Ga)} , \\
	\leq &\ c h^2 \|\vec w_h^\ell\|_{H^1(\Om)\times L^2(\Ga)} \, \|\vec v_h^\ell\|_{H^1(\Om)\times L^2(\Ga)} ,
	\end{align*}
	with constants independent of $h$, but depending on  $\diffS$, $\diffB$, $\massS$, $\oscB$ and $\stiffS$.
\end{lemma}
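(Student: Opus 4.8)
The plan is to prove all three estimates by the mechanism that underlies Lemma~\ref{lemma:geometric errors} (i.e.~\cite[Lemma~3.9]{dynbc}) and the scalar bulk and surface estimates of \cite{ElliottRanner,DziukElliott_L2}: transport every discrete integral over $\Om_h$ or $\Ga_h$ back to the smooth $\Om$ or $\Ga$ through the map $G_h$ of \eqref{eq:bulk mapping} and its boundary restriction, so that each difference $m-m_h$, $a-a_h$, $b-b_h$ becomes a single integral over $\Om$ or $\Ga$ of the lifted integrand weighted by the deviation of the relevant geometric factor from the identity. Because the forms here act on the product space and the coupling form $b$ pairs a bulk trace against an independent surface function, the only genuinely new work beyond the cited scalar results is the vector-valued bookkeeping and the verification that the trace and the lift commute; I would organise the argument around the two geometric inputs below.

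The first input is the surface estimate: after lifting, the ratio of $\d\sigma_h$ to $\d\sigma$ and the matrix relating $\nbgh$ to $\nbg$ both differ from the identity by $\Landau(h^2)$ uniformly on $\Ga$. The second is the bulk estimate: $G_h$ is the identity on every element not adjacent to $\Ga_h$, while on the boundary layer its Jacobian and gradient-transformation matrix differ from the identity by $\Landau(h)$ pointwise. Feeding these pointwise bounds into the Cauchy--Schwarz inequality---restricted to $B_h^\ell$ for bulk contributions and to $\Ga$ for surface contributions---produces each claimed term. For $m$ the bulk factor $w_h v_h$ yields the layer term $ch\,\|w_h^\ell\|_{L^2(B_h^\ell)}\|v_h^\ell\|_{L^2(B_h^\ell)}$ and the surface factor $\massS\,\omega_h\psi_h$ yields $ch^2\,\|\omega_h^\ell\|_{L^2(\Ga)}\|\psi_h^\ell\|_{L^2(\Ga)}$. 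For $a$ the zeroth- and first-order bulk contributions both localise to $B_h$ with factor $\Landau(h)$ and combine into $ch\,\|w_h^\ell\|_{H^1(B_h^\ell)}\|v_h^\ell\|_{H^1(B_h^\ell)}$, while the surface terms $\stiffS\,\omega_h\psi_h + \diffS\,\nbgh\omega_h\cdot\nbgh\psi_h$ give $ch^2\,\|\omega_h^\ell\|_{H^1(\Ga)}\|\psi_h^\ell\|_{H^1(\Ga)}$.

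The form $b$ is purely a surface integral with no gradients, so only the surface measure deviation enters and the whole error is $\Landau(h^2)$. Here I would first note the compatibility $(\gamma_h w_h)^\ell = \gamma(w_h^\ell)$ of trace and lift on $\Ga$, so that the lifted integrand is $\diffB\big((\ga w_h^\ell)\psi_h^\ell - \omega_h^\ell(\ga v_h^\ell)\big)$; applying the $\Landau(h^2)$ measure bound and Cauchy--Schwarz termwise gives the first line of the $b$ estimate. The second, consolidated line then follows from the trace inequality $\|\ga w_h^\ell\|_{L^2(\Ga)} \le c\|w_h^\ell\|_{H^1(\Om)}$ (and likewise for $v_h^\ell$), absorbing the trace norms into the $H^1(\Om)\times L^2(\Ga)$ norms of $\vec w_h^\ell$ and $\vec v_h^\ell$. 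I expect the main obstacle to be conceptual rather than computational: one must ensure that the bulk contributions carry only the boundary-layer order $\Landau(h)$---a consequence of the $\Landau(h^2)$ boundary displacement having an $\Landau(h)$ derivative---rather than the surface order $\Landau(h^2)$, and that the trace/lift identity is used correctly in $b$; both points are precisely what the cited geometric identities supply, so no new difficulty arises.
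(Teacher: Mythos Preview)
Your proposal is correct and follows essentially the same approach as the paper. The paper dispatches the $m$ and $a$ estimates by referring back to the proof of Lemma~\ref{lemma:geometric errors}, and handles $b$ by invoking the surface measure estimate \cite[Lemma~5.5, (5.13)]{DziukElliott_L2} together with the trace--lift compatibility $(\gamma_h w_h)^\ell = \gamma(w_h^\ell)$, then applying Cauchy--Schwarz termwise and the trace inequality for the consolidated bound---exactly the steps you outline.
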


\begin{proof}
	The first and the second estimate can be shown in the same way as Lemma~\ref{lemma:geometric errors}.
	For the last inequality, using \cite[Lemma~5.5, (5.13)]{DziukElliott_L2} and using that the lift, see Section~\ref{section:finite element method}, satisfies $(\ga_h v_h)^\ell = \ga (v_h^\ell)$ for all $v_h \in V_h$, we therefore obtain
	\begin{align*}
	|b(\vec w_h^\ell,\vec v_h^\ell) - b_h(\vec w_h,\vec v_h)| 
	\leq &\ \diffB \Big| \int_\Ga \ga w_h^\ell  \psi_h^\ell  - \omega_h^\ell  (\ga v_h^\ell) \d\sigma - \int_{\Ga_h} (\ga_h w_h) \psi_h  - \omega_h \ga_h v_h \d\sigma \Big| 
	\\
	\leq &\ \diffB \Big| \int_\Ga (\ga w_h)^\ell  \psi_h^\ell \d\sigma - \int_{\Ga_h} (\ga_h w_h) \psi_h \d\sigma \Big| 
	+ \diffB \Big| \int_{\Ga_h} \omega_h (\ga_h v_h) \d\sigma - \int_\Ga \omega_h^\ell  (\ga \vh)^\ell \d\sigma   \Big| 
	\\
	\leq &\ c h^2 \|\ga w_h^\ell\|_{L^2(\Ga)} \, \|\psi_h^\ell\|_{L^2(\Ga)} 
	+ c h^2 \|\omega_h^\ell\|_{L^2(\Ga)} \, \|\ga v_h^\ell\|_{L^2(\Ga)} 
	\\
	\leq &\ c h^2 \|\vec w_h^\ell\|_{H^1(\Om)\times L^2(\Ga)} \, \|\vec v_h^\ell\|_{H^1(\Om)\times L^2(\Ga)}.
	\end{align*}
\end{proof}

For the numerical discretisation of the wave equation with acoustic boundary conditions, we need two interpolation operators.
In addition to the bulk interpolation $\intpo^\Om \colon H^2(\Om) \to S_h$ which was defined in Lemma~\ref{lemma:nonpoly-interpolation}, we introduce the nodal interpolation of surface functions
\begin{align*}
\intpo^\Ga \colon H^2(\Ga) \to S_h^\Ga .
\end{align*}
As the mesh of $\Ga_h$ is given by the boundary nodes of $\calT_h$, the following identity-via-traces holds 
for $v \in H^2(\Om)$ with $\gamma (v) \in H^2(\Ga)$
\begin{align*}
\intpo^\Ga ( \gamma (v)) = \gamma_h ( \intpo^\Om v).
\end{align*}
Using the abbreviations
\begin{align*}
H^k = H^k(\Om) \times H^k(\Ga)\quad\text{for} \quad k\geq 1, \, \quadand L^2 = L^2(\Om) \times L^2(\Ga)
\end{align*}
we define the bulk--surface interpolation operator $\intpo \colon H^2 \to V_h$  componentwise as $\intpo (v,\psi) = (\intpo^\Om v, \intpo^\Ga \psi)$.
Similarly as the estimates of Lemma~\ref{lemma:nonpoly-interpolation}, or by \cite[Proposition~5.4]{ElliottRanner}, the lifted interpolation $\intpol \vv = \lift{(\intpo \vv)}$ has the following approximation property.

\begin{lemma} \label{lemma:nonpoly-interpolation-acoustic}
	Let $\vv\in H^2$.
	Then the interpolation error of $\intpol \vv$ is bounded by
	\begin{align*}
	\norm{\vv - \intpol \vv}_{L^2} +  h \norm{\vv - \intpol \vv}_{H^1}
	\leq C h^2 \norm{\vv}_{H^2} .
	\end{align*}
\end{lemma}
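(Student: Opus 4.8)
The plan is to exploit the product structure of the spaces $L^2 = \LtB \times \LtS$, $H^1 = \HoB \times \HoS$ and $H^2 = H^2(\Om) \times H^2(\Ga)$, and reduce the claim to the componentwise interpolation estimates already available in the bulk and on the surface. Writing $\vv = (v,\psi)$ with $v \in H^2(\Om)$ and $\psi \in H^2(\Ga)$, and recalling that the lifted interpolation acts componentwise, $\intpol \vv = ((\intpo^\Om v)^\ell, (\intpo^\Ga \psi)^\ell)$, the interpolation error splits as $\vv - \intpol \vv = (v - (\intpo^\Om v)^\ell,\, \psi - (\intpo^\Ga \psi)^\ell)$. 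Since each product norm is the sum of the squares of its two component norms, e.g.\ $\|\vv - \intpol \vv\|_{L^2}^2 = \|v - (\intpo^\Om v)^\ell\|_{\LtB}^2 + \|\psi - (\intpo^\Ga \psi)^\ell\|_{\LtS}^2$ and analogously for $H^1$, it suffices to bound each component separately and then recombine.

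For the bulk component I would apply the interpolation estimate in the bulk, Lemma~\ref{lemma:nonpoly-interpolation}(i) (here $(\intpo^\Om v)^\ell$ is precisely the $I_h v$ appearing there), to obtain $\|v - (\intpo^\Om v)^\ell\|_{\LtB} + h \|\nb(v - (\intpo^\Om v)^\ell)\|_{\LtB} \leq C h^2 \|v\|_{H^2(\Om)}$. The one point that is genuinely new compared to Lemma~\ref{lemma:nonpoly-interpolation} is the surface component: here $\psi$ is an intrinsic surface function in $H^2(\Ga)$, not the trace of a bulk function, so part (ii) of that lemma does not apply verbatim. Instead I would invoke the classical surface finite element interpolation estimate of Dziuk \cite{Dziuk88}, equivalently \cite[Proposition~5.4]{ElliottRanner}, for the lifted nodal surface interpolation, giving $\|\psi - (\intpo^\Ga \psi)^\ell\|_{\LtS} + h \|\nbg(\psi - (\intpo^\Ga \psi)^\ell)\|_{\LtS} \leq C h^2 \|\psi\|_{H^2(\Ga)}$.

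Combining the two componentwise bounds by summing their squares and taking square roots, and using $\|v\|_{H^2(\Om)}^2 + \|\psi\|_{H^2(\Ga)}^2 = \|\vv\|_{H^2}^2$, then yields $\|\vv - \intpol \vv\|_{L^2} + h \|\vv - \intpol \vv\|_{H^1} \leq C h^2 \|\vv\|_{H^2}$, as claimed. I do not expect a hard obstacle: the result is a routine consequence of the existing bulk and surface interpolation results together with the product structure of the spaces. The only thing requiring care is the use of the standalone surface bound (rather than the trace-based one) for $\psi$, and the standard observation, built into the cited geometric estimates, that the lift between $\Ga_h$ and $\Ga$ distorts the surface measure and tangential gradients only by factors $1 + \Landau(h^2)$, so that the lifted interpolant inherits the same order-$h^2$ bound as the interpolant on $\Ga_h$ itself.
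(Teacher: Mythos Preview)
Your proposal is correct and matches the paper's approach: the paper does not spell out a proof but simply refers to Lemma~\ref{lemma:nonpoly-interpolation} and \cite[Proposition~5.4]{ElliottRanner}, which is precisely the componentwise application of the bulk and surface interpolation estimates that you describe. Your observation that the surface component requires the intrinsic surface interpolation bound (since $\psi$ is not a trace) is exactly the right point, and is what the reference to \cite[Proposition~5.4]{ElliottRanner} covers.
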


\subsubsection{Error estimates for the Ritz map}

We recall the definition of the Ritz map from \eqref{Ritz} in the notation for acoustic boundary conditions: 
For $\vec w = (w,\omega) \in V$ we define $\Ritz \vec w \in V_h$ by
\begin{equation}
\label{eq:Ritz for acoustic}
\sobfh{\Ritz \vec w}{ \vec v_h } = \sobf{\vec w}{\lift{\vec{v}_h}} , \qquad \textnormal{ for all } \quad \vec v_h = (v_h,\psi_h) \in V_h 
\end{equation}  
and set $\Ritzl \vec w =\lift{(\Ritz \vec w)}$.

It is crucial to note that $\sobfsymb$ and $\sobfhsymb$ defined in \eqref{eq:acoustic bilinear form - a} and \eqref{eq:acoustic discrete bilinear form - a} do not couple bulk variables with surface variables. Therefore the Ritz map is given by a component-wise application of Ritz maps in the bulk and on the surface, i.e.~we have 
\begin{align*}
\Ritz \vec w =(\Ritz^\Om w,\Ritz^\Ga \omega)  .
\end{align*}
Accordingly, we define the lift of components as $\Ritzl^\Om  w =\lift{(\Ritz^\Om w)}$ and $\Ritzl^\Ga \omega =\lift{(\Ritz^\Ga \omega)}$, and hence $\Ritzl \vec w =(\Ritzl^\Om w,\Ritzl^\Ga \omega)$.

The second-order error estimate for the Ritz map thus follows from a combination of existing results, cf.~\cite[Section~5.4]{BreS08} for the bulk, and \cite{LubichMansour_wave} for the surface Ritz map.

\newcommand{\er}{\vec w - \Ritzl \vec w}
\begin{lemma}
	\label{lemma:Ritz acoustic}
	The error of the Ritz map \eqref{eq:Ritz for acoustic}, with the bilinear forms \eqref{eq:acoustic bilinear form - a} and \eqref{eq:acoustic discrete bilinear form - a}, on a smooth domain satisfies the following bounds, for $h \leq h_0$ with $h_0$ sufficiently small,
	\begin{align}
	\|\er\| \leq &\ C h \|\vw\|_{H^2}, \\
	|\er| \leq &\ C h^2 \|\vw\|_{H^2}, 
	\end{align}
	where the constants $C>0$ are independent of $h$ and $\vw \in H^2$.        
	%
\end{lemma}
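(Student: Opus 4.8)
The plan is to exploit the product structure of the problem. As noted just before the statement, neither $\sobf{\cdot}{\cdot}$ nor $\sobfh{\cdot}{\cdot}$ couples bulk and surface variables, so the Ritz map acts componentwise, $\Ritz \vec w = (\Ritz^\Om w, \Ritz^\Ga \omega)$, and the product norms on $V = H^1(\Om)\times H^1(\Ga)$ and $H = L^2(\Om)\times L^2(\Ga)$ split into their bulk and surface contributions. It therefore suffices to prove, for the symmetric coercive bulk form $a_\Om(w,v)=\int_\Om \oscB w v + \diffB \nabla w\cdot\nabla v\,\d x$, the bounds $\|w-\Ritzl^\Om w\|_{H^1(\Om)}\leq Ch\|w\|_{H^2(\Om)}$ and $\|w-\Ritzl^\Om w\|_{L^2(\Om)}\leq Ch^2\|w\|_{H^2(\Om)}$, together with the analogous surface bounds for $a_\Ga(\omega,\psi)=\int_\Ga \stiffS\omega\psi + \diffS\gradS\omega\cdot\gradS\psi\,\d\sigma$; both forms are inner products equivalent to the standard $H^1$ norms because $\oscB,\diffB,\stiffS,\diffS>0$. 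These are exactly the classical bulk Ritz estimate (\cite[Section~5.4]{BreS08} combined with the geometric analysis of \cite{ElliottRanner}) and the surface-FEM Ritz estimate of \cite{LubichMansour_wave}, and the product-space estimates follow by summing.

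For the \textbf{energy estimate} I would treat each component with a Strang-type argument adapted to the geometric variational crime. Writing $w-\Ritzl^\Om w = (w-\intpol w) + (\intpol w - \Ritzl^\Om w)$, the interpolation term is $\Landau(h)$ in $H^1(\Om)$ by Lemma~\ref{lemma:nonpoly-interpolation-acoustic}. For the discrete part I test the defining relation of the Ritz map against $\Ritz^\Om w - \intpo w$, add and subtract the continuous form $a_\Om$, and use coercivity on the left and the geometric estimate of Lemma~\ref{lemma:geometric-error-acoustic} on the right; the boundary-layer factor there is absorbed by the $H^1$ bound, so the geometric contribution is at worst $\Landau(h)$, giving $\|w-\Ritzl^\Om w\|_{H^1(\Om)}\leq Ch\|w\|_{H^2(\Om)}$. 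The surface component is identical, using the surface parts of Lemma~\ref{lemma:nonpoly-interpolation-acoustic} and Lemma~\ref{lemma:geometric-error-acoustic}.

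For the \textbf{$L^2$ estimate} I would run the Aubin--Nitsche duality argument componentwise. For the bulk, set $e = w-\Ritzl^\Om w$ and let $\varphi\in H^1(\Om)$ solve $a_\Om(\varphi,v)=(e,v)_{L^2(\Om)}$ for all $v$; by symmetry of $a_\Om$ and elliptic regularity on the $C^2$ domain $\Om$ one has $\varphi\in H^2(\Om)$ with $\|\varphi\|_{H^2(\Om)}\leq C\|e\|_{L^2(\Om)}$. Testing with $v=e$, inserting the interpolant $\intpol\varphi$ and the (geometrically perturbed) Ritz orthogonality, $\|e\|_{L^2(\Om)}^2$ is bounded by the product of the two $\Landau(h)$ energy Ritz errors of $w$ and $\varphi$ plus geometric remainders, yielding the $\Landau(h^2)$ bound. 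The surface component uses the analogous duality on $\Ga$ with Laplace--Beltrami regularity. The \emph{main obstacle} is the bookkeeping of the variational-crime terms in the duality step: the geometric estimate of Lemma~\ref{lemma:geometric-error-acoustic} carries an $\Landau(h)$ boundary-layer term in the $H^1$ pairing and $\Landau(h^2)$ terms elsewhere, and each such term must be paired with the $\Landau(h)$ approximation of the dual solution (invoking Lemma~\ref{lemma:layer est} for the layer) so that every contribution is genuinely $\Landau(h^2)$ and none degrades to $\Landau(h^{3/2})$. Because $\sobf{\cdot}{\cdot}$ for acoustic conditions carries only the standard boundary-layer geometric perturbation in the bulk and a regular $\Landau(h^2)$ perturbation on the surface, this balancing closes exactly as in \cite{ElliottRanner} and \cite{LubichMansour_wave}, and the remaining work is routine.
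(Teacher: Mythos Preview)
Your proposal is correct and follows essentially the same approach as the paper: exploit the uncoupled product structure so that the Ritz map acts componentwise, then invoke the classical bulk Ritz estimates (\cite[Section~5.4]{BreS08}, adapted via \cite{ElliottRanner} for the curved domain) and the surface-FEM Ritz estimates of \cite{LubichMansour_wave}, and sum. The paper's own treatment is in fact terser than yours---it simply records the componentwise decomposition and cites these two references without spelling out the Strang and Aubin--Nitsche arguments---so your additional detail on the variational-crime bookkeeping is consistent with, and a fleshing-out of, what the paper leaves implicit.
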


\subsubsection{Proof of Theorem~\ref{theorem: semi-discrete error bound: acoustic boundary conditions}}
The structure of the proof is the same as in the previous sections. However, due to $\sobfBsymb$ containing the bulk--surface coupling, we present the complete analysis.

We start by estimating the defect. The Ritz map error estimate for acoustic boundary conditions Lemma~\ref{lemma:Ritz acoustic}, and interpolation estimates of Lemma~\ref{lemma:nonpoly-interpolation-acoustic} yield for $\ddot{\vec{u}}, \vec \sosrc \in H^2$ 
\begin{align*}
\normVadual{\Ritzl \ddot{\vec{u}} - \ddot{\vec{u}}} &\leq  C  \normH{\Ritzl \ddot{\vec{u}} - \ddot{\vec{u}}} \leq c h^2 ,
\\
\normVadual{\intpol \vec \sosrc - \vec \sosrc}  & \leq C \normH{\intpol \vec \sosrc - \vec \sosrc} \leq c h^2 .
\end{align*}

The term $\normVadual{\soB (\Ritzl \dot u - \dot u)}$ is estimated directly, using \eqref{eq:acoustic bilinear form - b}, Lemma~\ref{lemma:Ritz acoustic}, and the following version of the trace inequality, for a function $w \in H^1(\Om)$ and for an arbitrary but fixed $\eps > 0$ we have the trace inequality
\begin{equation}
\label{eq:trace inequality with eps}
\|\ga w\|_{L^2(\Ga)} \leq \eps \|\nb w\|_{L^2(\Om)} + c \frac{1}{\eps} \|w\|_{L^2(\Om)}.
\end{equation}
The proof of this trace inequality uses an $\eps$-Young's inequality instead of the standard one, cf.~\cite[(1) in Section~5.5]{Evans_PDE}, but otherwise it is the same as usual.

By choosing $\eps=h^{1/2}>0$ in \eqref{eq:trace inequality with eps}, for $\vv =(v,\psi) \in V = H^1$, we obtain
\begin{align*}
b(R_h \dot{\vec{u}} - \dot{\vec{u}} , \vec v) 
\leq  &\ c \Big| \int_\Ga \big( \ga (R_h^\Om \dot{u} - \dot{u}) \big) \psi  - \big( R_h^\Ga \dot{\delta} - \dot{\delta} \big) (\ga v) \ \d\sigma \Big| \\
\leq &\ c \| \ga (R_h^\Om \dot{u} - \dot{u})\|_{L^2(\Ga)}  \|\psi\|_{L^2(\Ga)} + c \|R_h^\Ga \dot{\delta} - \dot{\delta}\|_{L^2(\Ga)} \|\ga v\|_{L^2(\Ga)} \\
\leq &\ c \Big( h^{1/2} \|\nb (R_h^\Om \dot{u} - \dot{u})\|_{L^2(\Om)} + \frac{c}{h^{1/2}} \|R_h^\Om \dot{u} - \dot{u}\|_{L^2(\Om)} \Big)  \|\psi\|_{L^2(\Ga)} + c \|R_h^\Ga \dot{\delta} - \dot{\delta}\|_{L^2(\Ga)} c \|v\|_{H^{1/2}(\Om)} \\
\leq &\ c \Big( h^{1+1/2} \|\dot{u}\|_{H^2(\Om)} + c h^{2-1/2} \|\dot{u}\|_{H^2(\Om)} \Big)  \|\psi\|_{L^2(\Ga)} + c \|R_h^\Ga \dot{\delta} - \dot{\delta}\|_{L^2(\Ga)} c \|v\|_{H^{1/2}(\Om)} \\
\leq &\ \big( c h^{3/2} \|\dot u\|_{H^2(\Om)} + c h^2 \|\dot{\delta}\|_{H^2(\Ga)} \big) \big(\|v\|_{H^1(\Om)} + \|\psi\|_{H^1(\Ga)} \big) \\
\leq &\ c h^{3/2} \|\dot{\vec{u}}\|_{H^2} \|\vec v\| .
\end{align*}

By the definition of the dual norm we have
\begin{equation}
\label{eq:Ritz dual B estimate}
\normVadual{\soB (\Ritzl \dot{\vec{u}} - \dot{\vec{u}})} \leq c h^{3/2} .
\end{equation}

For the convergence of the geometric errors note that by Lemma~\ref{lemma:geometric-error-acoustic} we have the bound
\begin{equation}
\label{eq:Delta b estimate for acoustic bc}
\DeltasobfB{\vwh}{\vvh} \leq c h^2 \normVa{\lift{\vwh}} \normVa{\lift{\vwh}}.
\end{equation}
Therefore, it follows as in the proof of Theorem~\ref{theorem: semi-discrete error bound: pure second} that for $\ddot{\vec{u}},\dot{\vu} \in V$ and $\vec \sosrc \in H^2 \cap V$
\begin{align*}
\normVhdual{\DeltaipH{\Ritz \ddot{\vec{u}}}{\cdot}}
+ \normVhdual{\DeltasobfB{\Ritz \dot{\vec{u}}}{\cdot}}
+ \normVhdual{\DeltaipH{\intpo \vec \sosrc}{\cdot}} \leq c h^2 .
\end{align*}

Altogether, we obtain the defect estimate, for a sufficiently small $h \leq h_0$,
\begin{equation*}
\normVhdual{\defect} \leq C h^{3/2},
\end{equation*}
with a constant $C$ independent of $h$, but depending on Sobolev norms of the solution $u$ (and also its time derivatives).

For the errors in the initial data \eqref{eq:initial data error} we again have $\eps_0 \leq c h^{3/2}$, by similar arguments used above to prove \eqref{eq:Ritz dual B estimate} and \eqref{eq:Delta b estimate for acoustic bc}, together with Ritz map and interpolation error estimates.

We again recall that the error $u - u_h^\ell$ was estimated in terms of the defect and initial data error \eqref{eq:abstract estimate}, the combination of this estimate with the above results yields the convergence bound:
\begin{equation*}
\normH{u \t - \lift u_h \t} \leq C h^{3/2} .		
\end{equation*}
\qed 

\section{Time discretisations}
\label{section:time discreteisations}

\subsection{Matrix--vector formulation}
We collect the nodal values of $u_h(\cdot,t) = \sum_{j=1}^N u_j(t) \phi_j(\cdot) \in V_h$ the solution of the semi-discrete problem \eqref{eq:sowt-h-var} into the vector $\bfu \t = (u_1\t,\ldots,u_N \t) \in \R^N$, and we define the matrices corresponding to the bilinear forms $m_h$, $a_h$ and $b_h$, respectively, and the load vector:
\begin{equation}
\label{eq:FEM matrices}
\begin{aligned}
\bfM|_{kj} = &\ m_h(\phi_j,\phi_k) , \\
\bfA|_{kj} = &\ a_h(\phi_j,\phi_k) , \\
\bfB|_{kj} = &\ b_h(\phi_j,\phi_k) , \\
\overline\bfb|_{k} = &\ m_h(\sosrch(\cdot,t),\phi_k) , \\
\end{aligned}
\qquad j,k = 1,\dotsc,N ,
\end{equation}
where $\phi_j$ ($j= 1,\dotsc,N$) are the basis functions of $V_h$. In the case of acoustic boundary conditions Section~\ref{section:semi-discrete acoustic b.c.} all functions in $V_h$ are vector valued, see \eqref{eq:acoustic FEM space}. \bbk In particular, the basis of $V_h = S_h \times S_h^\Ga$, from \eqref{eq:acoustic FEM space}, is the product of the bases of $S_h$ and $S_h^\Ga$. \ebk All matrices inherit their properties from their corresponding bilinear form, therefore, both matrices $\bfM$ and $\bfA$ are symmetric and positive definite, while the matrix $\bfB + \cqmbh \bfM$ (with $\cqmbh \geq 0$) is positive semi-definite, but can be non-symmetric. \bbk For problems with acoustic boundary conditions the above matrices are block diagonal, with the blocks containing the respective matrices of the bulk or the surface. \ebk 

Then the semi-discrete problem \eqref{eq:sowt-h-var} is equivalent to the following matrix--vector formulation:
\begin{equation}
\label{eq:matrix-vector form - general}
\begin{aligned}
\bfM \ddot\bfu\t + \bfB \dot\bfu\t + \bfA \bfu\t =&\ \overline\bfb\t ,
\end{aligned}
\end{equation}
with initial values $\bfu(0) = \bfu^0$ and $\dot\bfu(0) = \bfv^0$, where $\bfu^0$ and $\bfv^0$ collects the nodal values of the interpolations of $u_0$ and $u_1$.

The above second order system of ordinary differential equations can be written as the first order system, by introducing the new variable 
\bbk \begin{equation}
\label{eq:bfv the new variable}
\bfv\t = \bfM \dot\bfu\t + \bfB \bfu\t,
\end{equation}
collecting the nodal values of $v_h(\cdot,t) = \sum_{j=1}^N v_j(t) \phi_j(\cdot)$, we obtain \ebk 
\begin{equation}
\label{eq:matrix-vector form - first order}
\begin{aligned}
\dot\bfv\t = &\ - \bfA \bfu\t + \overline\bfb\t , \\
\dot\bfu\t = &\  \bfM\inv \bfv\t - \bfM\inv \bfB \bfu\t .
\end{aligned}
\end{equation}

Using the variable $\bfy\t = (\bfv\t,\bfu\t)^T \in \R^{2N}$, the block matrices
\begin{equation*}
\bfJ = \left(\begin{matrix}
0 & \Id_N \\ 
-\Id_N & 0
\end{matrix}\right) , \qquad
\bfH = \left(\begin{matrix}
\bfM\inv & 0 \\ 
0 & \bfA
\end{matrix}\right)  \andquad  
\bfHH = \left(\begin{matrix}
0 & -\bfM\inv \bfB \\ 
0 & 0
\end{matrix}\right) ,
\end{equation*}
and the load vector $\bfb\t =(\overline\bfb\t,0)^T$, and where $\Id_N$ denotes the identity matrix of $N=\text{dim}(V_h)$. The ODE system \eqref{eq:matrix-vector form - first order} is equivalent to the first order ODE system
\begin{equation}
\label{eq:first order form}
\dot\bfy\t = \bfJ\inv \big(\bfH + \bfHH \big) \bfy\t + \bfb\t ,
\end{equation}
with initial value $\bfy(0) = (\bfM\bfv^0 + \bfB \bfu^0,\bfu^0)^T$. The system is written in this form, since for $\bfB=0$ it is Hamiltonian. This (further) geometric structure will be used to show stability of the full discretisation.

\newcommand\bfS{{\mathbf S}}

We further introduce the matrix
\begin{equation}
\label{eq:def bfS}
\bfS = \left(\begin{matrix}
\bfA\inv & 0 \\ 
0 & \bfM
\end{matrix}\right) ,
\end{equation}
and the corresponding induced norm, for arbitrary $\bfy=(\bfv,\bfu)^T$,
\begin{equation}
\label{eq:S norm def}
\|\bfy\|_\bfS^2 = \bfy^T \bfS \bfy = \|\bfv\|_{\bfA\inv}^2 + \|\bfu\|_{\bfM}^2 = \|v_h\|_{\star,h}^2 + |u_h|_h^2 ,
\end{equation}
\bbk which, by comparing \eqref{eq:test function semidiscrete} and \eqref{eq:bfv the new variable}, fits perfectly to the norm of the weak norm energy estimate of Proposition~\ref{proposition: stability of error with velocity}. \ebk 

Along the proof of the stability bounds we need the following properties. The operator $\bfJ\inv \bfH$ is skew-symmetric with respect to the $\bfS$ inner product,  direct computation shows:
\begin{equation}
\label{eq:S product estimate}
\begin{aligned}
\bfy^T \bfS \bfJ\inv \bfH \bfy 
= &\ \left(\begin{matrix} \bfA\inv \bfv \\ \bfM \bfu \end{matrix}\right)^T \left(\begin{matrix} 0 & -\Id_N \\  \Id_N & 0 \end{matrix}\right) \left(\begin{matrix} \bfM\inv \bfv \\ \bfA \bfu \end{matrix}\right) \\
= &\ - \bfv^T \bfA\inv \bfA \bfu + \bfu^T \bfM \bfM\inv \bfv \\
= &\ 0 .
\end{aligned}
\end{equation}
While for $\bfHH$, using the quasi monotonicity of the bilinear form $b_h$, we have the inequality
\begin{equation}
\label{eq:S product estimate with hat H}
\begin{aligned}
\bfy^T \bfS \bfJ\inv \bfHH \bfy = &\ - \bfu^T \bfM \bfM\inv \bfB \bfu = - \bfu^T \big( \bfB + \cqmbh \Id_N \big)\bfu + \cqmbh \bfu^T \bfu \leq 0 + c \|\bfu\|_{\bfM}^2 \leq c \|\bfy\|_\bfS^2 .
\end{aligned}
\end{equation}

\bbk We note here that compared above cited papers (cf.,~in particular, \cite[Section~2.6 and 2.7]{Mansour_GRK}) the matrix $\bfS$ is chosen differently, but \eqref{eq:S product estimate} and \eqref{eq:S product estimate with hat H} hold similarly. \ebk


\subsection{Implicit Runge--Kutta methods}

The first-order system of ordinary differential equations \eqref{eq:first order form} is discretised in time using an $s$-stage implicit Runge--Kutta method. For a fixed time step size $\tau>0$, the method determines the approximations $\bfy^n$ and the internal stages $\bfY^{ni}$, for $n\tau \leq T$, by
\begin{samepage}
	\begin{subequations}
		\label{RK method}
		\begin{align}
		\bfY^{ni}  = &\ \bfy^n + \tau \sum_{j=1}^n a_{ij} \dot\bfY^{nj} ,  & i=1,\dotsc,s , \\
		\bfy^{n+1}  = &\ \bfy^n + \tau \sum_{j=1}^n b_j \dot\bfY^{nj} \\
		\intertext{where the internal stages satisfy}
		\dot\bfY^{nj} =&\ \bfJ\inv (\bfH + \bfHH) \bfY^{nj} + \bfb^{nj} ,    & j=1,\dotsc,s ,
		\end{align}
	\end{subequations}
\end{samepage}
with $\bfb^{nj} = \bfb(t_n+c_j\tau)$, ,and where $\dot\bfY^{nj}$ is not a time derivative, only a suggestive notation. The method is determined by its coefficient matrix $\AC=(a_{ij})_{i,j=1}^s$, weights $b=(b_i)_{i=1}^s$ and nodes $c=(c_i)_{i=1}^s$. 

In the following, we assume the Runge--Kutta method \eqref{RK method} to be \emph{algebraically stable}, i.e.~the coefficients $b_j \geq 0$ and the matrix with entries
\begin{align*}
b_i a_{ij} + b_j a_{ji} - b_ib_j \quad \textnormal{is positive semi-definite} .
\end{align*}
We also assume that the coefficient matrix is invertible $\AC\inv=(w_{ij})_{i,j=1}^s$. Furthermore, the Runge--Kutta method is \emph{coercive}, that is, there exists a positive definite diagonal matrix $\calD\in\R^{s \times s}$ and $\alpha>0$ such that
\begin{equation}
\label{eq:R-K coercivity}
\bfw^T \calD \AC\inv \bfw \geq \alpha \bfw^T \calD \bfw \quadfora \bfw \in \R^s .
\end{equation}
The diagonal matrix is explicitly given by $\calD = \textnormal{diag}(b)(\textnormal{diag}(c)\inv - \Id_s)$, see \cite{HairerWannerII}.

\smallskip
The Runge--Kutta methods based on Gauss (and also those on Radau IA and Radau IIA) collocation nodes are known to be algebraically stable and coercive, and to have a non-singular coefficient matrix. For more details on these concepts and such methods we refer to \cite[Chapter~IV]{HairerWannerII} and the references therein.

\subsection{Convergence of the full discretisation with Gauss--Runge--Kutta methods}

\bbk 
The proof of convergence clearly separates the issues of stability and consistency. The consistency analysis follows in the usual way by estimating defects, on the other hand stability, while as the semi-discrete case it also relies on energy estimates, is more involved and is carried out in detail.
\ebk 


\subsubsection{Stability}

We first show stability for implicit Runge--Kutta methods applied to the ODE system \eqref{eq:first order form}. The proof of the stability bound is a straightforward simplification of the corresponding results of \cite[Lemma~4.1]{Mansour_GRK}, or \cite[Section~3]{HP_RK}, \cite[Lemma~5.1--5.2]{HPS}, \cite[Lemma~4.3]{rkkato}, where more general problems than \eqref{eq:first order form} are considered.

\begin{lemma}[Stability]
	\label{lemma:RK stability}
	The error $\bfe^n = \bfy^n - \bfy(t_n)$ between the numerical solution obtained by an $s$-stage implicit Runge--Kutta method and $\bfy(t_n)$ the exact solution of \eqref{eq:first order form} satisfies the bound, for $n\tau \leq T$,
	\begin{equation*}
	\|\bfe^n\|_\bfS \leq C \Big(\|\bfe^0\|_\bfS^2 + \sum_{k=0}^{n-1} \sum_{j=1}^s \|\bfD^{kj}\|_\bfS^2 + \sum_{k=1}^n \|\bfd^{k}\|_\bfS^2 \Big)^{1/2} ,
	\end{equation*}
	where the defects $\bfD^{kj}$ and $\bfd^{k}$ are obtained by substituting the nodal values of the exact solution into the method \eqref{RK method}. The constant $C>0$ is independent of $h, \tau$ and $n$. 
\end{lemma}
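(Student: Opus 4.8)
The plan is to adapt the standard energy-based stability argument for algebraically stable and coercive Runge--Kutta methods (as in the cited works) to the $\bfS$-inner product, whose two structural properties \eqref{eq:S product estimate} and \eqref{eq:S product estimate with hat H} are exactly what the argument needs. First I would set up the error equations. Substituting the exact nodal values $\bfy(t_n)$ and the exact internal stage values into \eqref{RK method} produces, by definition, the defects $\bfD^{nj}$ in the stage equations and $\bfd^{n+1}$ in the update equation. Subtracting these from the scheme, writing $\bfe^n=\bfy^n-\bfy(t_n)$ and denoting the stage errors by $\bfE^{ni}$, the inhomogeneity $\bfb^{nj}$ cancels and I obtain
\begin{align*}
\bfE^{ni} &= \bfe^n + \tau\sum_{j=1}^s a_{ij}\dot\bfE^{nj} - \bfD^{ni}, \qquad \bfe^{n+1} = \bfe^n + \tau\sum_{j=1}^s b_j\dot\bfE^{nj} - \bfd^{n+1},
\end{align*}
with $\dot\bfE^{nj} = \bfJ\inv(\bfH + \bfHH)\bfE^{nj}$.

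Next I would derive the one-step energy identity in the $\bfS$-norm. Expanding $\|\bfe^{n+1}+\bfd^{n+1}\|_\bfS^2 = \|\bfe^n + \tau\sum_j b_j\dot\bfE^{nj}\|_\bfS^2$ and eliminating $\bfe^n$ in the cross term through the stage equation, the purely quadratic contribution in the stage derivatives carries the symmetric coefficient matrix $b_ia_{ij}+b_ja_{ji}-b_ib_j$, which is positive semi-definite by algebraic stability; hence that contribution can be dropped. The defect cross-terms $\langle\bfD^{ni},\dot\bfE^{ni}\rangle_\bfS$ produced by the same substitution are controlled by Young's inequality. What remains is
\begin{equation*}
\|\bfe^{n+1}+\bfd^{n+1}\|_\bfS^2 \leq \|\bfe^n\|_\bfS^2 + 2\tau\sum_{i=1}^s b_i\langle\bfE^{ni},\dot\bfE^{ni}\rangle_\bfS + (\text{defect terms}).
\end{equation*}
Here \eqref{eq:S product estimate} annihilates the $\bfH$-part of $\langle\bfE^{ni},\dot\bfE^{ni}\rangle_\bfS$ and \eqref{eq:S product estimate with hat H} bounds the $\bfHH$-part, giving $\langle\bfE^{ni},\dot\bfE^{ni}\rangle_\bfS \leq c\|\bfE^{ni}\|_\bfS^2$; since $b_i\geq 0$ this yields a clean $+\,2c\tau\sum_i b_i\|\bfE^{ni}\|_\bfS^2$.

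The crux is then to bound the stage errors $\|\bfE^{ni}\|_\bfS$ by $\|\bfe^n\|_\bfS$ and the stage defects. This is where coercivity \eqref{eq:R-K coercivity} is essential: solving the stage equations for $\tau\dot\bfE^{nj}$ via $\AC\inv$, testing with the diagonal matrix $\calD$ in the $\bfS$-inner product, and once more invoking the near-dissipativity \eqref{eq:S product estimate with hat H}, I would obtain $\tau\sum_i b_i\|\bfE^{ni}\|_\bfS^2 \leq C\big(\|\bfe^n\|_\bfS^2 + \sum_j\|\bfD^{nj}\|_\bfS^2\big)$ for $\tau$ small enough, the smallness being used to absorb the $\bfHH$-perturbation into the coercivity lower bound $\alpha$. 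I expect this coercivity-based stage estimate to be the main obstacle, since the stages are coupled implicitly and one must verify that the non-symmetric, only quasi-dissipative operator $\bfJ\inv(\bfH+\bfHH)$ does not spoil the lower bound supplied by \eqref{eq:R-K coercivity}.

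Finally I would combine these estimates and treat $\bfd^{n+1}$ by the triangle and Young inequalities to reach the recursion
\begin{equation*}
\|\bfe^{n+1}\|_\bfS^2 \leq (1+C\tau)\|\bfe^n\|_\bfS^2 + C\tau\sum_{j=1}^s\|\bfD^{nj}\|_\bfS^2 + C\|\bfd^{n+1}\|_\bfS^2.
\end{equation*}
A discrete Gronwall inequality summed over $n\tau\leq T$ then produces a factor $\ee^{CT}$, which is absorbed into the constant, and using $\tau\leq T$ to absorb the residual $\tau$ in front of the stage-defect sum gives the asserted bound after taking a square root. The constant $C$ is independent of $h$, $\tau$ and $n$ because \eqref{eq:S product estimate} and \eqref{eq:S product estimate with hat H} hold uniformly in $h$ and the Runge--Kutta coefficients are fixed; this is precisely the simplification relative to the more general problems of \cite{Mansour_GRK,HP_RK,HPS,rkkato}.
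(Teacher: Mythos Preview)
Your overall plan is sound and uses the right ingredients, but there is a genuine gap at the step where you write ``the defect cross-terms $\langle\bfD^{ni},\dot\bfE^{ni}\rangle_\bfS$ produced by the same substitution are controlled by Young's inequality.'' Young's inequality here would give $|\langle\bfD^{ni},\dot\bfE^{ni}\rangle_\bfS|\leq \epsilon\|\dot\bfE^{ni}\|_\bfS^2+C_\epsilon\|\bfD^{ni}\|_\bfS^2$, and you have no control over $\|\dot\bfE^{ni}\|_\bfS$: since $\dot\bfE^{ni}=\bfJ\inv(\bfH+\bfHH)\bfE^{ni}$ and $\bfJ\inv\bfH$ is an unbounded operator (its $\bfS$-norm blows up like $h^{-1}$ by an inverse inequality), this term cannot be absorbed into anything available. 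In particular, for Gauss methods the algebraic stability matrix $b_ia_{ij}+b_ja_{ji}-b_ib_j$ vanishes identically (symplecticity), so there is no negative quadratic form in the $\dot\bfE^{ni}$ to absorb it; for Radau methods the matrix is only semi-definite and has a nontrivial kernel, so the same obstruction persists. Shifting the operator onto $\bfD^{ni}$ via the $\bfS$-adjoint does not help either, because that would replace $\|\bfD^{ni}\|_\bfS$ by a stronger norm of the defect, which is not what the lemma asserts.

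The paper sidesteps exactly this difficulty by a Lady Windermere's fan decomposition rather than a single recursion: it treats (a) the \emph{local error} (start from $\bfe^n=0$, keep the defects) and (b) \emph{error propagation} (keep $\bfe^n\neq 0$, set the defects to zero) separately. In step~(a) the coercivity argument gives $\sum_j\|\bfE^{nj}\|_\bfS^2\leq c\sum_j\|\bfD^{nj}\|_\bfS^2$, and then one uses the purely algebraic identity $\bfe^{n+1}=(b^T\AC\inv\otimes\Id)(\bfE^n+\bfD^n)-\bfd^{n+1}$, which never requires a bound on $\dot\bfE^{nj}$. In step~(b) there are no defects, hence no cross term, and the algebraic-stability energy computation you describe goes through cleanly to yield $\|\bfe^{n+1}\|_\bfS^2\leq(1+C\tau)\|\bfe^n\|_\bfS^2$. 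Combining both via Lady Windermere's fan gives the stated bound. If you want to keep a one-step recursion, you would have to find a different way to eliminate the mixed term---for instance by working with the shifted stages $\bfE^{ni}+\bfD^{ni}$ throughout and re-doing the coercivity estimate accordingly---but as written the proposal does not close.
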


\begin{proof}
	The proof is based on energy techniques for algebraically stable Runge--Kutta methods, using Lady Windermere's fan \cite[II.3 and I.7]{HairerWannerII}, and it is a straightforward simplification of the proof of Lemma~4.1 in \cite{Mansour_GRK}. A clear difference is the use of a different norm, induced by $\bfS$ here, which is due to the required weaker norms in the final estimates, \bbk compare the norms in \eqref{eq:semi-discrete stability of error with velocity} and \eqref{eq:S norm def}. \ebk Furthermore, the estimate (2.19) in \cite{Mansour_GRK}, analogous to \eqref{eq:S product estimate} here, is slightly different, but it is used in the same way. The additional $\bfHH$ is treated analogously as $\bfH$ but using the bound \eqref{eq:S product estimate with hat H}.
	
	\bigskip
	The values $\widetilde\bfy^n = \bfy(t_n)$ and $\widetilde{\bfY}^{nj}=\bfy(t_n+c_j\tau)$ of the exact solution of the ODE \eqref{eq:first order form} only satisfies the \eqref{RK method} up to some defects:
	\begin{equation}
	\label{eq:RK defect def}
		\begin{aligned}
			\widetilde\bfY^{ni}  = &\ \widetilde\bfy^n + \tau \sum_{j=1}^n a_{ij} \dot{\widetilde{\bfY}}^{nj} + \bfD^{nj}, \qquad i=1,\dotsc,s, \\
			\widetilde\bfy^{n+1}  = &\ \widetilde\bfy^n + \tau \sum_{j=1}^n b_j \dot{\widetilde{\bfY}}^{nj} + \bfd^{n+1} \\
			\dot{\widetilde{\bfY}}^{nj} = &\ \bfJ\inv (\bfH + \bfHH) \widetilde\bfY^{nj} + \bfb^{nj} , \qquad j=1,\dotsc,s .
		\end{aligned}
	\end{equation}
	
	The errors, defined by
	\begin{equation*}
		\bfe^n = \bfy^n - \widetilde\bfy^n, \quad \bfE^{nj} = \bfY^{nj} - \widetilde\bfY^{nj}, \andquad \dot\bfE^{nj} = \dot\bfY^{nj} - \dot{\widetilde{\bfY}}^{nj} ,
	\end{equation*}
	satisfy the error equations (obtained by subtracting \eqref{eq:RK defect def} from \eqref{RK method}):
	\begin{subequations}
	\label{eq:RK error eq}
	\begin{align}
		\label{eq:RK error eq a}
		\bfE^{ni}  = &\ \bfe^n + \tau \sum_{j=1}^n a_{ij} \dot\bfE^{nj} - \bfD^{nj}, \qquad i=1,\dotsc,s, \\
		\label{eq:RK error eq b}
		\bfe^{n+1}  = &\ \bfe^n + \tau \sum_{j=1}^n b_j \dot\bfE^{nj} - \bfd^{n+1} \\
		\label{eq:RK error eq c}
		\dot\bfE^{nj} = &\ \bfJ\inv (\bfH + \bfHH) \bfE^{nj} , \qquad j=1,\dotsc,s .
	\end{align}
	\end{subequations}

	(a) \emph{Local error.} We first estimate the local error, i.e.\ the error after one step starting from the exact initial value ($\bfe^n=0$). 
	
	The error equation for the internal stages \eqref{eq:RK error eq a} is rewritten using the vectors $\bfE^n = (\bfE^{n1}, \dotsc, \bfE^{ns})^T$ and $\bfD^n = (\bfD^{n1}, \dotsc, \bfD^{ns})^T$:
	\begin{align}
	\label{eq:RK error eq a - vector form}
		\bfE^{n}  = &\ \tau (\AC \otimes \Id) \dot\bfE^{n} - \bfD^{n} .
	\end{align}
	
	We multiply both sides by $(\bfE^n)^T (\calD \AC\inv \otimes \bfS)$ and obtain
	\begin{equation}
	\label{eq:local err tested}
		(\bfE^n)^T (\calD \AC\inv \otimes \bfS) \bfE^{n} = \tau (\bfE^n)^T (\calD \otimes \bfS) \dot\bfE^{n} + (\bfE^n)^T (\calD \AC\inv \otimes \bfS) \bfD^{n},
	\end{equation}
	then we estimates these terms separately.
	
	For the left-hand side the coercivity of the Runge--Kutta method \eqref{eq:R-K coercivity} (recall that $d_i > 0$) yields
	\begin{align*}
		(\bfE^n)^T (\calD \AC\inv \otimes \bfS) \bfE^{n} \geq &\ \alpha (\bfE^n)^T (\calD \otimes \bfS) \bfE^{n} \\
		\geq &\ \alpha \min_{i=1,\dotsc,s}\{d_i\} \sum_{j=1}^s \|\bfE^{nj}\|_\bfS^2 = c_0 \sum_{j=1}^s \|\bfE^{nj}\|_\bfS^2 .
	\end{align*}
	
	For the first term on right-hand side, using \eqref{eq:RK error eq c} and the bounds \eqref{eq:S product estimate} and \eqref{eq:S product estimate with hat H}, we obtain
	\begin{align*}
		(\bfE^n)^T (\calD \otimes \bfS) \dot\bfE^{n} = &\ \sum_{j=1}^s d_j (\bfE^{nj})^T \bfS \bfJ\inv (\bfH + \bfHH) \bfE^{nj} 
		\leq c \sum_{j=1}^s \|\bfE^{nj}\|_\bfS^2 .
	\end{align*}
	
	For the other term on the right-hand side we use Cauchy--Schwarz and Young's inequality we obtain
	\begin{align*}
		(\bfE^n)^T (\calD \AC\inv \otimes \bfS) \bfD^{n} \leq &\ c \sum_{j=1}^s \|\bfE^{nj}\|_\bfS \|\bfD^{nj}\|_\bfS 
		\leq  \frac{c_0}{4} \sum_{j=1}^s \|\bfE^{nj}\|_\bfS^2 + c \sum_{j=1}^s \|\bfD^{nj}\|_\bfS^2 .
	\end{align*}
	
	The combination of these estimates and absorptions (using a sufficiently small $\tau$) yields
	\begin{equation}
	\label{eq:estimate for internal stages - loc err}
		\sum_{j=1}^s \|\bfE^{nj}\|_\bfS^2 \leq c \sum_{j=1}^s \|\bfD^{nj}\|_\bfS^2 .
	\end{equation}
	
	Similarly, \eqref{eq:RK error eq b} can also be rewritten, by expressing $\dot\bfE^n$ from \eqref{eq:RK error eq a - vector form}, as
	\begin{equation*}
		\bfe^{n+1} = \tau b^T \dot\bfE^n - \bfd^{n+1} = (b^T \AC\inv \otimes \Id) (\bfE^n + \bfD^n) - \bfd^{n+1},
	\end{equation*}
	which is estimated, again using Cauchy--Schwarz and Young's inequalities and the bound \eqref{eq:estimate for internal stages - loc err}, by
	\begin{align*}
		\|\bfe^{n+1}\|_\bfS \leq c \sum_{j=1}^s \|\bfD^{nj}\|_\bfS + c \|\bfd^{n+1}\|_\bfS .
	\end{align*}

	(b) \emph{Error propagation.} We analyse how the error of two numerical solutions propagate along the time steps, i.e.\ we study the error equations \eqref{eq:RK error eq} with zero defects ($\bfD^{nj}=0$ and $\bf^{n+1}=0$).
	
	We compute the norm of $\bfe^{n+1}$, which is expressed using \eqref{eq:RK error eq b}, and then rewritten using \eqref{eq:RK error eq a}:
	\begin{equation}
	\label{eq:error propagation first bound}
		\begin{aligned}
			\|\bfe^{n+1}\|_\bfS^2 = &\ \|\bfe^n\|_\bfS^2 + 2 \tau \sum_{j=1}^n (\bfE^{nj})^T \bfS \dot\bfE^{nj} 
			- \tau^2 \sum_{i,j=1}^n (b_i a_{ij} + b_j a_{ji} - b_ib_j) (\dot\bfE^{nj})^T \bfS \dot\bfE^{ni} \\
			\leq &\ \|\bfe^n\|_\bfS^2 + 2 \tau \sum_{j=1}^n (\bfE^{nj})^T \bfS \dot\bfE^{nj}  
		\end{aligned}
	\end{equation}
	The third term on the right-hand side is non-positive due to algebraic stability. The second term is estimated using Cauchy--Schwarz and Young's inequalities, similarly as in part (a), by \eqref{eq:RK error eq c} and the bounds  \eqref{eq:S product estimate} and \eqref{eq:S product estimate with hat H}, we obtain
	\begin{equation}
	\label{eq:second term est error prop}
		(\bfE^{nj})^T \bfS \dot\bfE^{nj} \leq c \|\bfE^{nj}\|_\bfS^2 .
	\end{equation}
	
	Similarly as in part (a) of the proof (with $\bfe^n$ playing the role of $\bfD^{ni}$), we have the estimate
	\begin{equation}
	\label{eq:internal stages est error prop}
		\sum_{j=1}^s \|\bfE^{nj}\|_\bfS^2 \leq c \|\bfe^n\|_\bfS^2 .
	\end{equation}
	
	Altogether, by substituting the bounds \eqref{eq:second term est error prop} and \eqref{eq:internal stages est error prop} into \eqref{eq:error propagation first bound} we obtain
	\begin{align*}
		\|\bfe^{n+1}\|_\bfS^2 \leq (1 + C \tau) \|\bfe^n\|_\bfS^2 .
	\end{align*}
	
	(c) \emph{Error accumulation.} A standard application of Lady Windermere's fan, see \cite[II.3 and I.7]{HairerWannerII}, completes the proof of stability.
\end{proof}

\subsubsection{Convergence}

Via the above stability bound and by the estimates for the semi-discrete defect $d_h$ (defined in \eqref{eq:defect definition}) from Lemma~\ref{lemma:abstract defect bound} (after using the suitable estimates from Section~\ref{section:FEM error analysis}), we obtain the following fully discrete convergence estimates with the stage order $s$ in time. The theorem holds for the general case as long as a defect bound in the dual norm is known. The result is stated simultaneously for all four exemplary cases considered in the paper.
\begin{theorem}
	\label{theorem: convergence - stage order}
	Let the solution of the wave equation with dynamic boundary conditions be sufficiently regular in time $u \in H^{s+2}(0,T;H^2(\Om))$ with $\ga u \in H^{s+2}(0,T;H^2(\Ga))$ (in addition to the regularity assumptions of Theorems~\ref{theorem: semi-discrete error bound: pure second}--\ref{theorem: semi-discrete error bound: acoustic boundary conditions}). Then there is a $\tau_0 >0$ and an $h_0 > 0$ such that, for $\tau \leq \tau_0$ and $h\leq h_0$, the error between the solution $u(\cdot,t_n)$ and the fully discrete solution $(u_h^n)^\ell$, obtained using first order finite elements and an $s$-stage Gauss--Runge--Kutta method, satisfies the following convergence estimates, for $n \tau \leq T$,
	\begin{equation}
	\label{eq:convergence bound with s}
	\|(u_h^n)^\ell - u(\cdot,t_n)\|_{L^2(\Om)} + \|\ga((u_h^n)^\ell - u(\cdot,t_n))\|_{L^2(\Ga)} \leq C (h^k + \tau^s),
	\end{equation}
	where the power $k$ is given in Theorems~\ref{theorem: semi-discrete error bound: pure second}, \ref{theorem: semi-discrete error bound: advective}, \ref{theorem: semi-discrete error bound: strong damping} and \ref{theorem: semi-discrete error bound: acoustic boundary conditions}, for the different problems, respectively. The constant $C>0$ is independent of $h$, $\tau$ and $n$, but depends on the corresponding Sobolev norms of the exact solution $u$ and on $T$.
	
	For purely second order problems with $\beta=0$ the error on the boundary is measured in the $H^{-1/2}(\Ga)$ norm instead of the $L^2(\Ga)$ norm, cf.~\eqref{eq:error estimate - pure second - beta = 0}. 
	
	For problems with acoustic boundary conditions in the second term we have $\delta$ on the boundary instead of $\ga u$, cf.~\eqref{eq:error estimate: acoustic bc}.
\end{theorem}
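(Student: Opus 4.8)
The plan is to prove \eqref{eq:convergence bound with s} by the usual splitting of the full error into a temporal and a spatial contribution,
\begin{equation*}
(u_h^n)^\ell - u(\cdot,t_n) = \big( (u_h^n)^\ell - (u_h(t_n))^\ell \big) + \big( (u_h(t_n))^\ell - u(\cdot,t_n) \big),
\end{equation*}
where $u_h(t)$ denotes the \emph{exact} solution of the semi-discrete problem \eqref{eq:sowt-h-var}, equivalently of the ODE \eqref{eq:first order form}. The second bracket is precisely the semi-discrete error at the grid point $t_n$, and is bounded by $Ch^k$ in the $L^2(\Om)\times L^2(\Ga)$ norm by the relevant one of Theorems~\ref{theorem: semi-discrete error bound: pure second}--\ref{theorem: semi-discrete error bound: acoustic boundary conditions}, with $k$ as tabulated (and with the weaker boundary norm in the $\beta=0$ case, or with $\delta$ in place of $\ga u$ for acoustic conditions). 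It therefore only remains to bound the first bracket, the time discretisation error of the ODE, by $C\tau^s$.

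For the temporal error I would work with the vector $\bfe^n = \bfy^n - \bfy(t_n)$ of Lemma~\ref{lemma:RK stability}, where $\bfy(t)=(\bfv(t),\bfu(t))^T$ collects the nodal values of the semi-discrete solution. Since the fully discrete scheme is started from the same interpolated initial data as the semi-discrete problem, $\bfe^0=0$, and Lemma~\ref{lemma:RK stability} reduces the estimate to
\begin{equation*}
\|\bfe^n\|_\bfS \leq C \Big( \sum_{k=0}^{n-1}\sum_{j=1}^s \|\bfD^{kj}\|_\bfS^2 + \sum_{k=1}^n \|\bfd^k\|_\bfS^2 \Big)^{1/2}.
\end{equation*}
Everything then hinges on the defects produced by inserting $\bfy(t)$ into \eqref{RK method}. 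Because $\bfy$ solves the ODE exactly, these are pure quadrature remainders,
\begin{align*}
\bfD^{ni} &= \bfy(t_n+c_i\tau) - \bfy(t_n) - \tau\sum_{j=1}^s a_{ij}\dot\bfy(t_n+c_j\tau), \\
\bfd^{n+1} &= \bfy(t_{n+1}) - \bfy(t_n) - \tau\sum_{j=1}^s b_j\dot\bfy(t_n+c_j\tau),
\end{align*}
in which \emph{no} power of the stiff operator $\bfJ\inv(\bfH+\bfHH)$ appears.

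A Taylor expansion using the stage order $s$ and quadrature order $2s$ of the $s$-stage Gauss method then gives $\|\bfD^{ni}\|_\bfS \leq C\tau^{s+1}$ and $\|\bfd^{n+1}\|_\bfS \leq C\tau^{2s+1}$, with constants controlled by $\sup_t \|\partial_t^{s+1}\bfy(t)\|_\bfS$. Summing the $n\sim T/\tau$ contributions of size $\Landau(\tau^{2s+2})$ yields $\|\bfe^n\|_\bfS \leq C\tau^{s+1/2}\leq C\tau^{s}$ for $\tau\leq\tau_0$. Finally, extracting the $\bfM$-block of the $\bfS$-norm gives $\normHh{u_h^n-u_h(t_n)} = \|\bfe_u^n\|_\bfM \leq \|\bfe^n\|_\bfS$, and the norm equivalence \eqref{eq:norm equiv} turns this into the bound $\normH{(u_h^n-u_h(t_n))^\ell}\leq C\tau^s$ on the first bracket in the $L^2(\Om)\times L^2(\Ga)$ norm; combined with the spatial bound $Ch^k$ this is \eqref{eq:convergence bound with s}.

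The step I expect to be the main obstacle is making the defect bounds \emph{uniform in $h$} in the weak norm $\|\cdot\|_\bfS$. The Taylor remainders involve $\partial_t^{s+1}\bfy$, whose $\bfS$-norm must not degenerate as $h\to 0$; crucially, the representation above already avoids applying the unbounded operator $\bfJ\inv(\bfH+\bfHH)$, so it suffices to control the time derivatives of the semi-discrete solution itself in the weak norm. These are bounded uniformly in $h$ by differentiating \eqref{eq:sowt-h-var} in time up to order $s+1$ and applying the weak-norm stability estimate of Proposition~\ref{proposition: stability of error with velocity} to $\partial_t^j u_h$, whose data are the corresponding time derivatives of $\soivh,\soish,\sosrch$ (the initial derivatives being fixed recursively through the semi-discrete equation). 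This is exactly where the assumed regularity $u\in H^{s+2}(0,T;H^2(\Om))$, $\ga u\in H^{s+2}(0,T;H^2(\Ga))$ enters, together with the approximation and geometric estimates of Section~\ref{section:FEM error analysis} which make the bounds $h$-independent. Once this uniform control on $\partial_t^{s+1}\bfy$ is in place, the consistency analysis is the standard Gauss--Runge--Kutta one of \cite{Mansour_GRK,HP_RK,HPS,rkkato}, and the two contributions combine to the claimed estimate.
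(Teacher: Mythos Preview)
Your splitting via the semi-discrete solution $u_h(t)$ is sound and does lead to a proof, but it is a genuinely different route from the paper's. The paper does \emph{not} pass through $u_h(t)$; instead it compares the Runge--Kutta iterates directly with $\widetilde\bfy(t)$ built from the \emph{Ritz projection} $R_h u(\cdot,t)$. The point is that $\widetilde\bfy$ only satisfies the first-order ODE \eqref{eq:first order form} up to the semi-discrete residual $\bfr$ (whose $\bfS$-norm is exactly $\normVhdual{d_h}$), so the error equation for $\bfe^n=\bfy^n-\widetilde\bfy(t_n)$ carries \emph{two} sources: the Runge--Kutta defects $\bfD^{nj},\bfd^{n+1}$ from the Taylor expansion of $\widetilde\bfy$, and the spatial term $\tau\sum a_{ij}\bfr^{nj}$. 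The stability lemma then gives the bound with both contributions, the spatial one controlled by $h^k$ via Lemma~\ref{lemma:abstract defect bound} and Section~\ref{section:FEM error analysis}, the temporal one by $\tau^s$ via Peano kernel bounds on $\partial_t^{s+1}\widetilde\bfy$. The advantage is that $\partial_t^{s+1}\widetilde\bfy$ involves only $R_h u^{(s+1)}$ and $R_h u^{(s+2)}$, which are bounded in the required $\|\cdot\|_h$ and $|\cdot|_h$ norms \emph{directly} by the $H^2$-stability of the Ritz map: no regularity of $u_h$ needs to be established.

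In your approach the consistency step hinges on a uniform-in-$h$ bound for $\|\partial_t^{s+1}\bfy(t)\|_\bfS$ with $\bfy$ the semi-discrete solution. Your sketch (differentiate and apply Proposition~\ref{proposition: stability of error with velocity}) glosses over the initial data: $\partial_t^{j}u_h(0)$ for $j\ge 2$ is not an interpolation of $\partial_t^{j}u(0)$ but is fixed recursively by the discrete equation, and thus involves powers of $A_h$ acting on $\soivh,\soish$. Controlling $|A_h^{m}\widetilde I_h u_0|_h$ uniformly in $h$ requires, in effect, $u_0\in H^{2m}$, so for $s\ge 3$ your argument as written asks for more spatial regularity than the hypothesis $u\in H^{s+2}(0,T;H^2)$ supplies. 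One can close this (e.g.\ by transferring regularity through the PDE, or by comparing $\partial_t^{j}u_h(0)$ to $R_h\partial_t^{j}u(0)$), but that extra work is precisely what the paper's choice of the Ritz projection as intermediate avoids.
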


\begin{proof}
	Similarly as in Section~\ref{section: error analysis and Ritz}, the fully discrete error between the solution $u(\cdot,t_n)$ and the fully discrete solution $u_h^n$ (with nodal values $\bfu^n$, the second component of $\bfy^n$) is decomposed as, with $e_h(t_n)=\Ritz u(\cdot,t_n) -  u_h^n$,
	\begin{equation*}
	\begin{aligned}
	\normH{u(\cdot,t_n) - (u_h^n)^\ell} 
	&\leq   \normH{ u(\cdot,t_n) - \Ritzl u(\cdot,t_n) } + \normH*{\lift{(\Ritz u(\cdot,t_n) -  u_h^n)}} \\
	&\leq   \normH{ u(\cdot,t_n) - \Ritzl u(\cdot,t_n) } + c \normHh*{e_h(t_n)} .
	\end{aligned}
	\end{equation*}
	The error of the Ritz map has been estimated before in Lemma~\ref{lemma: Ritz error est} as $\Landau (h^2)$.
	
	In order to bound the second term we use the stability bound of Lemma~\ref{lemma:RK stability}. This part of the proof is analogous to the proof of \cite[Theorem~5.2]{Mansour_GRK}, however due to some differences we carry it out below.
	
	The vector $\bfe_u\t$ collecting the nodal values of the semi-discrete error $e_h\t$, from Section~\ref{section: error analysis and Ritz}, satisfies the ODE
	\begin{equation}
	\label{eq:ODE with semi-discrete residual}
	\bfM \ddot\bfe_u\t + \bfB \dot\bfe_u\t + \bfA \bfe_u\t = - \overline\bfr\t ,
	\end{equation}
	with the vector $\overline\bfr\t \in \R^N$ collecting the nodal values of the semi-discrete residual $d_h\t \in V_h$ satisfying the equality \eqref{eq:defect definition}.
	%
	
	The error equation is again rewritten as a first order ODE system, collecting the two errors $\bfe\t = (\bfe_v\t,\bfe_u\t)^T$ (here $\bfe_v$ denotes the error in the $\bfv$ component, cf.~\eqref{eq:matrix-vector form - first order}),  which satisfies an ODE system similar to \eqref{eq:first order form}, with $\bfr\t=(\overline\bfr\t,0)^T$,
	\begin{equation}
	\label{eq:first order form - semi-discrete residual}
	\dot\bfe\t = \bfJ\inv \big(\bfH + \bfHH \big) \bfe\t - \bfr\t .
	\end{equation}
	
	
	The fully discrete errors $\bfe^n$ and $\bfE^{nj}$ 
	then satisfy the error equations for the Runge--Kutta method:
	\begin{subequations}
		\label{eq:RK error eq with residual}
		\begin{align}
		\label{eq:RK error eq with residual a}
		\bfE^{ni}  = &\ \bfe^n + \tau \sum_{j=1}^n a_{ij} \bfJ\inv (\bfH + \bfHH) \bfE^{nj} - \Big(\tau \sum_{j=1}^n a_{ij} \bfr^{nj} + \bfD^{nj}\Big), \qquad i=1,\dotsc,s, \\
		\label{eq:RK error eq with residual b}
		\bfe^{n+1}  = &\ \bfe^n + \tau \sum_{j=1}^n b_j \bfJ\inv (\bfH + \bfHH) \bfE^{nj} - \Big(\tau \sum_{j=1}^n b_j \bfr^{nj} + \bfd^{n+1}\Big) .
		\end{align}
	\end{subequations}
	The stability bound from Lemma~\ref{lemma:RK stability} yields
	\begin{equation}
	\|\bfe^n\|_\bfS \leq C \Big(\|\bfe^0\|_\bfS^2 + \tau^2 \sum_{k=0}^{n-1} \sum_{j=1}^s \|\bfr^{kj}\|_\bfS^2 + \sum_{k=0}^{n-1} \sum_{j=1}^s \|\bfD^{kj}\|_\bfS^2 + \sum_{k=1}^n \|\bfd^{k}\|_\bfS^2 \Big)^{1/2}.
	\end{equation}
	
	The temporal defect terms on the right-hand side are estimated separately. The defects related to time discretisations, using Taylor expansion, satisfy
	\begin{align*}
	\bfd^{n+1} =&\ \tau^s \int_{t_n}^{t_{n+1}} K\big((t-t_n)/\tau\big)\widetilde{\bfy}^{(s+1)}\t \d t ,\\
	\bfD^{nj} =&\ \tau^s \int_{t_n}^{t_{n+1}} K_j\big((t-t_n)/\tau\big)\widetilde{\bfy}^{(s+1)}\t \d t ,
	\end{align*}
	with bounded Peano kernels $K$ and $K_j$, see \cite[Section~3.2.6]{Gautschi}, and where the vector $\widetilde{\bfy}\t = (\widetilde{\bfv}\t,\widetilde{\bfu}\t)^T$ is the nodal vector corresponding to
	\begin{equation}
	\label{eq:exact solution vector for Gauss-R-K}
	R_h \dot u(\cdot,t) + B R_h u(\cdot,t) = \sum_{j=1}^N \widetilde{\bfv}_j\t \phi_j \andquad R_h u(\cdot,t) = \sum_{j=1}^N \widetilde{\bfu}_j\t \phi_j .
	\end{equation}
	
	Therefore, by \eqref{eq:S norm def}, norm equivalences and Lemma~\ref{lemma: Ritz error est}, we obtain
	\begin{align*}
	\|\bfd^{n+1}\|_\bfS + \sum_{j=1}^s \|\bfD^{nj}\|_\bfS 
	\leq &\ c \tau^s \int_{t_n}^{t_{n+1}} \|R_h u^{(s+2)}(\cdot,t) + B R_h u^{(s+1)}(\cdot,t)\|_{\star} + |R_h u^{(s+1)}(\cdot,t)| \d t \\
	\leq &\ c \tau^s \int_{t_n}^{t_{n+1}} |R_h u^{(s+1)}(\cdot,t)| + \|R_h u^{(s+1)}(\cdot,t)\| + |R_h u^{(s+2)}(\cdot,t)| \d t \\
	\leq &\ c \tau^s \sum_{i=1}^2 \int_{t_n}^{t_{n+1}} \Big( \|u^{(s+i)}\t\|_{H^2(\Om)}^2 + \|\ga (u^{(s+i)}\t) \|_{H^2(\Ga)}^2 \Big)^{1/2} \d t .
	\end{align*}
	
	For the semi-discrete residual we have, since $\bfr\t=(\overline\bfr\t,0)^T$ and by \eqref{eq:S norm def},
	\begin{align*}
	\tau \sum_{k=0}^{n-1} \sum_{j=1}^s \|\bfr^{kj}\|_\bfS 
	= &\ \tau \sum_{k=0}^{n-1} \sum_{j=1}^s \|\overline\bfr^{kj}\|_{\bfA\inv} 
	= \tau \sum_{k=0}^{n-1} \sum_{j=1}^s \|d_h(t_n+c_j \tau)\|_{\star,h} ,
	\end{align*}
	which was estimated in Lemma~\ref{lemma:abstract defect bound} and Section~\ref{section:FEM error analysis} as $\Landau (h^k)$ with the appropriate $k$ from Theorem~\ref{theorem: semi-discrete error bound: pure second}, \ref{theorem: semi-discrete error bound: advective}, \ref{theorem: semi-discrete error bound: strong damping}, or \ref{theorem: semi-discrete error bound: acoustic boundary conditions}.
	
	Therefore, by \eqref{eq:S norm def} and the above bounds, we have 
	\begin{align*}
	|e_h|_h \leq &\ \|\bfe^n\|_\bfS \leq C ( h^k + \tau^s ) .
	\end{align*}
	This, together with Ritz map error estimates, finishes the proof.
\end{proof}

As in \cite{LubichOstermann_interior} and \cite{Mansour_GRK,rkkato}, under stronger regularity assumptions temporal convergence with the classical order $p$ can also be shown. For Gauss--Runge--Kutta methods the classical order $p = 2s$, see, e.g.~\cite{HairerWannerII}.

We assume that, for the nodal values of the Ritz map of the exact solution $\widetilde{\bfy}\t$ (see \eqref{eq:exact solution vector for Gauss-R-K})
\begin{equation}
\label{eq:more regularity}
\begin{aligned}
\|\bfJ\inv (\bfH + \bfHH)^{k_j-1} \dotsb \bfJ\inv (\bfH + \bfHH)^{k_1-1} \widetilde{\bfy}^{(l)}\t \|_\bfS \leq &\ C_0 , \\
\|\bfJ\inv (\bfH + \bfHH)^{s} \bfJ\inv (\bfH + \bfHH)^{k_j-1} \dotsb \bfJ\inv (\bfH + \bfHH)^{k_1-1} \widetilde{\bfy}^{(l)}\t \|_\bfS \leq &\ C_0 ,
\end{aligned}
\end{equation}
for $s \geq 2$ with a $C_0 > 0$, for all $0 \leq k_i \leq s-1$ and $l \geq s+1$ with $k_1 + \dotsb + k_j + l \leq 2s+1$, where negative powers of operators are understood as the identity operator.

\begin{theorem}
	\label{theorem: convergence - classical order}
	Let the solution of the wave equation with dynamic boundary conditions satisfy the regularity conditions of Theorem~\ref{theorem: convergence - stage order} and additionally those in \eqref{eq:more regularity}. Then there is a $\tau_0 >0$ and an $h_0 > 0$ such that, for $\tau \leq \tau_0$ and $h\leq h_0$, the error between the solution $u(\cdot,t_n)$ and the fully discrete solution $(u_h^n)^\ell$, obtained using first order finite elements and an $s$-stage Gauss--Runge--Kutta method, satisfies the following convergence estimates of classical order $p = 2s$, for $n \tau \leq T$,
	\begin{equation}
	\label{eq:convergence bound with p}
	\|(u_h^n)^\ell - u(\cdot,t_n)\|_{L^2(\Om)} + \|\ga((u_h^n)^\ell - u(\cdot,t_n))\|_{L^2(\Ga)} \leq C (h^k + \tau^{2s}),
	\end{equation}
	where the power $k$ is the same as in Theorem~\ref{theorem: convergence - stage order}. The constant $C>0$ is independent of $h$, $\tau$ and $n$, but depends on the solution $u$, on $C_0$ from \eqref{eq:more regularity}, and on $T$.
	
	Along the same remarks from Theorem~\ref{theorem: convergence - stage order} for $\beta=0$, and for problems with acoustic boundary conditions.
\end{theorem}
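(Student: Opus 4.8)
The plan is to follow the same two-step reduction as in the proof of Theorem~\ref{theorem: convergence - stage order}, changing only the treatment of the \emph{temporal} defects so as to exploit the superconvergence of Gauss--Runge--Kutta methods at the grid points. First I would keep the error splitting
\begin{equation*}
\normH{u(\cdot,t_n) - (u_h^n)^\ell} \leq \normH{u(\cdot,t_n) - \Ritzl u(\cdot,t_n)} + c \normHh{e_h(t_n)},
\end{equation*}
so that the Ritz term is $\Landau(h^2)$ by Lemma~\ref{lemma: Ritz error est} (hence $\Landau(h^k)$ since $k\leq 2$), and only the fully discrete error $\|\bfe^n\|_\bfS$ — whose second ($\bfu$-)block controls $\normHh{e_h(t_n)}$ — has to be re-examined. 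As before, $\bfe^n$ satisfies the Runge--Kutta error equations \eqref{eq:RK error eq with residual} driven by the semi-discrete residual $\bfr$ and the time-discretisation defects $\bfd^{n+1},\bfD^{nj}$, and applying the stability bound of Lemma~\ref{lemma:RK stability} the semi-discrete residual is estimated \emph{exactly} as in Theorem~\ref{theorem: convergence - stage order}, contributing $\Landau(h^k)$ through Lemma~\ref{lemma:abstract defect bound} and Section~\ref{section:FEM error analysis}. The entire improvement therefore concerns the purely temporal contribution.

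Next I would replace the crude bound $\|\bfD^{nj}\|_\bfS,\|\bfd^{n+1}\|_\bfS=\Landau(\tau^s)$ by the refined superconvergence argument of \cite{LubichOstermann_interior}, in the form adapted to energy estimates in \cite{Mansour_GRK,rkkato}. The idea is not to estimate the defects pointwise, but to insert them into the discrete variation-of-constants representation of $\bfe^n$ and to resum along the time steps. Writing each defect through its Peano kernel, $\bfd^{n+1}=\tau^s\int_{t_n}^{t_{n+1}}K((t-t_n)/\tau)\,\widetilde\bfy^{(s+1)}(t)\,\d t$ and similarly for $\bfD^{nj}$, one uses the simplifying order conditions of the $s$-stage Gauss method (quadrature order $2s$ together with $C(s)$) to rewrite the order-$(s{+}1),\dots,(2s)$ parts of the defect as discrete differences of smooth quantities. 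A summation by parts then telescopes these terms, so that only genuinely $\Landau(\tau^{2s})$ remainders survive the accumulation, while the internal-stage contributions — of order $s$ but premultiplied by $\tau$ — remain harmless.

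The main obstacle, and the reason for the extra hypotheses, is that this resummation repeatedly applies the \emph{stiff}, $h$-dependent operator $\bfJ\inv(\bfH+\bfHH)$ to time derivatives of the nodal Ritz-map vector $\widetilde\bfy$. A naive expansion would pay a negative power of $h$ for every such application and thereby annihilate the temporal gain. The assumptions \eqref{eq:more regularity} are designed precisely to prevent this: they bound every composite quantity $\|\bfJ\inv(\bfH+\bfHH)^{k_j-1}\cdots\bfJ\inv(\bfH+\bfHH)^{k_1-1}\widetilde\bfy^{(l)}(t)\|_\bfS$ by $C_0$ uniformly in $h$, subject to the bookkeeping constraint $k_1+\dots+k_j+l\leq 2s+1$. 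Matching the operator powers generated by the resummation to the available temporal smoothness via this constraint is the delicate part; once it is in place, each resummed term is $\Landau(\tau^{2s})$ uniformly in $h$. Combining the $\Landau(\tau^{2s})$ temporal contributions with the $\Landau(h^k)$ semi-discrete residual and the $\Landau(h^2)$ Ritz error yields \eqref{eq:convergence bound with p}. The $H^{-1/2}(\Ga)$ modification for $\beta=0$ and the replacement of $\ga u$ by $\delta$ for acoustic boundary conditions are inherited verbatim from Theorem~\ref{theorem: convergence - stage order}.
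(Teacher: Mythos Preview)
Your proposal is correct in its overall architecture and invokes the same references as the paper; the error splitting, the treatment of the spatial residual, and your explanation of why the bounds \eqref{eq:more regularity} are needed all match. The one difference is the specific device for upgrading the temporal order: the paper (following \cite{LubichOstermann_interior} and \cite[Theorem~5.3]{Mansour_GRK}) does not use a variation-of-constants representation with summation by parts, but the \emph{iterated modified-stage} trick. One replaces the exact internal stages $\widetilde\bfY^{nj}$ by $\widehat\bfY^{nj}=\widetilde\bfY^{nj}+\bfD^{nj}$, rederives the error equations, and observes that the new stage defects pick up an extra factor $\tau\,\bfJ^{-1}(\bfH+\bfHH)$ applied to the old $\bfD^{nj}$; each iteration therefore trades a factor $\tau$ for one application of the stiff operator to a higher time derivative of $\widetilde\bfy$, which is exactly the quantity controlled by \eqref{eq:more regularity}. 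After at most $s$ iterations the temporal contribution is $\Landau(\tau^{2s})$. Both mechanisms exploit the same algebraic structure of the Gauss method and the same regularity hypotheses, so the outcome is identical; the paper's iterative version is simply the form in which the argument appears in the cited sources.
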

\begin{proof}
	The proof of this theorem directly follows the proof of \cite[Theorem~1]{LubichOstermann_interior} (parabolic problems), \cite[Theorem~5.3]{Mansour_GRK} (wave equations), where one additional order is gained by studying the modified error equations with the modified solution $\widehat\bfY^{nj} = \widetilde\bfY^{nj} + \bfD^{nj}$, using the stability bound from Lemma~\ref{lemma:RK stability} and the proof of Theorem~\ref{theorem: convergence - stage order}. The process can be iterated until convergence with classical order is achieved.
\end{proof}

\section{Numerical experiments}
\label{section:numerics}

In this section, we report on numerical experiments which illustrate that the proven spatial and temporal convergence rates of Theorem~\ref{theorem: semi-discrete error bound: pure second}--\ref{theorem: semi-discrete error bound: acoustic boundary conditions} and \ref{theorem: convergence - classical order} are indeed observed (with the exception of strongly damped problems).

We implemented a finite element discretization of the wave equation with dynamic boundary conditions in FEniCS, cf.~\cite{FEniCS15}, while for problems with acoustic boundary conditions we have used a Matlab implementation based on the P2Q2Iso2D code provided by \cite{BarCH06}. The triangulation of the domain was computed using the DistMesh package by \cite{PerS04}.

For all our numerical experiments we have used bulk--surface finite elements using piecewise linear basis functions as a space discretisation and the Gauss--Runge--Kutta method with one node ($s=1$) and of order two, i.e.~the implicit midpoint rule, for time integration. For each test problem the numerical solutions were computed for a sequence of time step sizes $\tau_j = \tau_{k-1}/2$ with $\tau_0 = 2^{-5}$ and a sequence of meshes with mesh widths $h_j \approx h_{k-1}/2$ with $h_0 \approx 0.33$.

In the case of numerical experiments when the exact solution is not known, the errors shown in the figures are obtained by comparing the numerical solution with a reference solution, which is computed using quadratic isoparametric elements \bbk (i.e.~using a mesh with a quadratic approximation of the boundary and quadratic basis functions) \ebk on the finest mesh (for FEniCS simulation on the second finest mesh) and using the smallest time step size from above. Otherwise the exact and numerical solutions are compared. In the figures we plotted the errors at time $T = 1$, while for acoustic boundary conditions at $T=0.2$.

All tests were carried out on the two dimensional unit disc and its boundary:
\begin{align*}
\Omega = \{ x \in \R^2 \mid \abs{x} < 1 \}, \andquad \Ga = \pa \Om = \{ x \in \R^2 \mid \abs{x} = 1 \}.
\end{align*}

\subsection{Purely second-order dynamic boundary conditions -- Theorem~\ref{theorem: semi-discrete error bound: pure second} and \eqref{theorem: convergence - classical order}}
For our first test, we consider the wave equation with purely second-order dynamic boundary condition \eqref{wave eqn - pure second-order} with $\mu = \beta = 1$, \bdh $\kappa = 0$ and \edh $f_\Om = f_\Ga = 0$.
The initial values are
\begin{align}
\label{eq:iv_func}
u(x,0) = e^{-20((x_1-1)^2+x_2^2)}, \andquad \dot u(x,0) = 0,
\end{align}
such that the solution shows a surface wave travelling along $\Gamma$ due to the dynamic boundary condition.

\begin{figure}[hbp]
	\centering
	\includegraphics[width=1\linewidth]{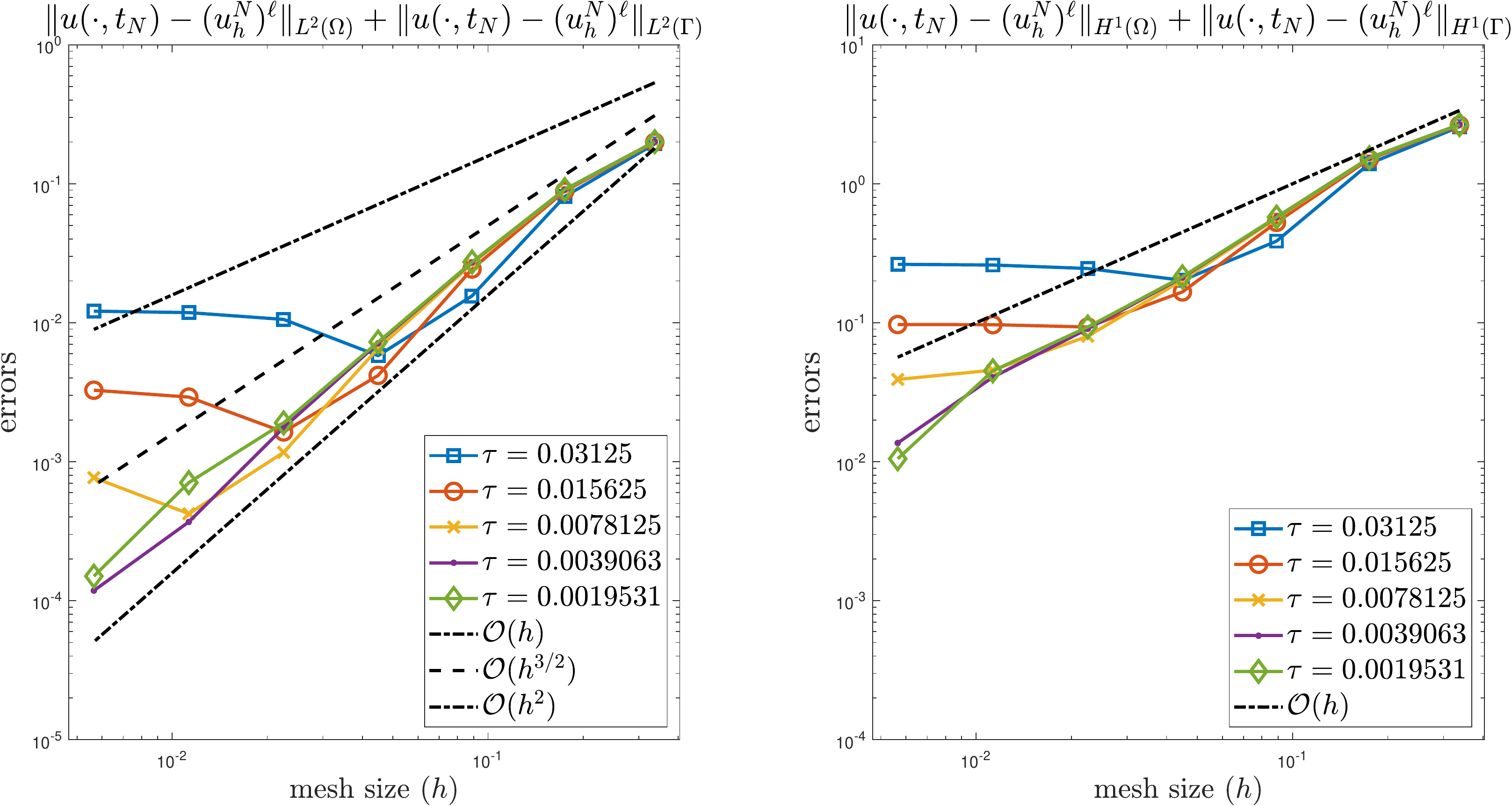}
	\caption{Spatial convergence plots for purely second order problems (Theorem~\ref{theorem: semi-discrete error bound: pure second})}
	\label{figure:purely second order spatial}
\end{figure}
\begin{figure}[hbp]
	\centering
	\includegraphics[width=0.5\linewidth]{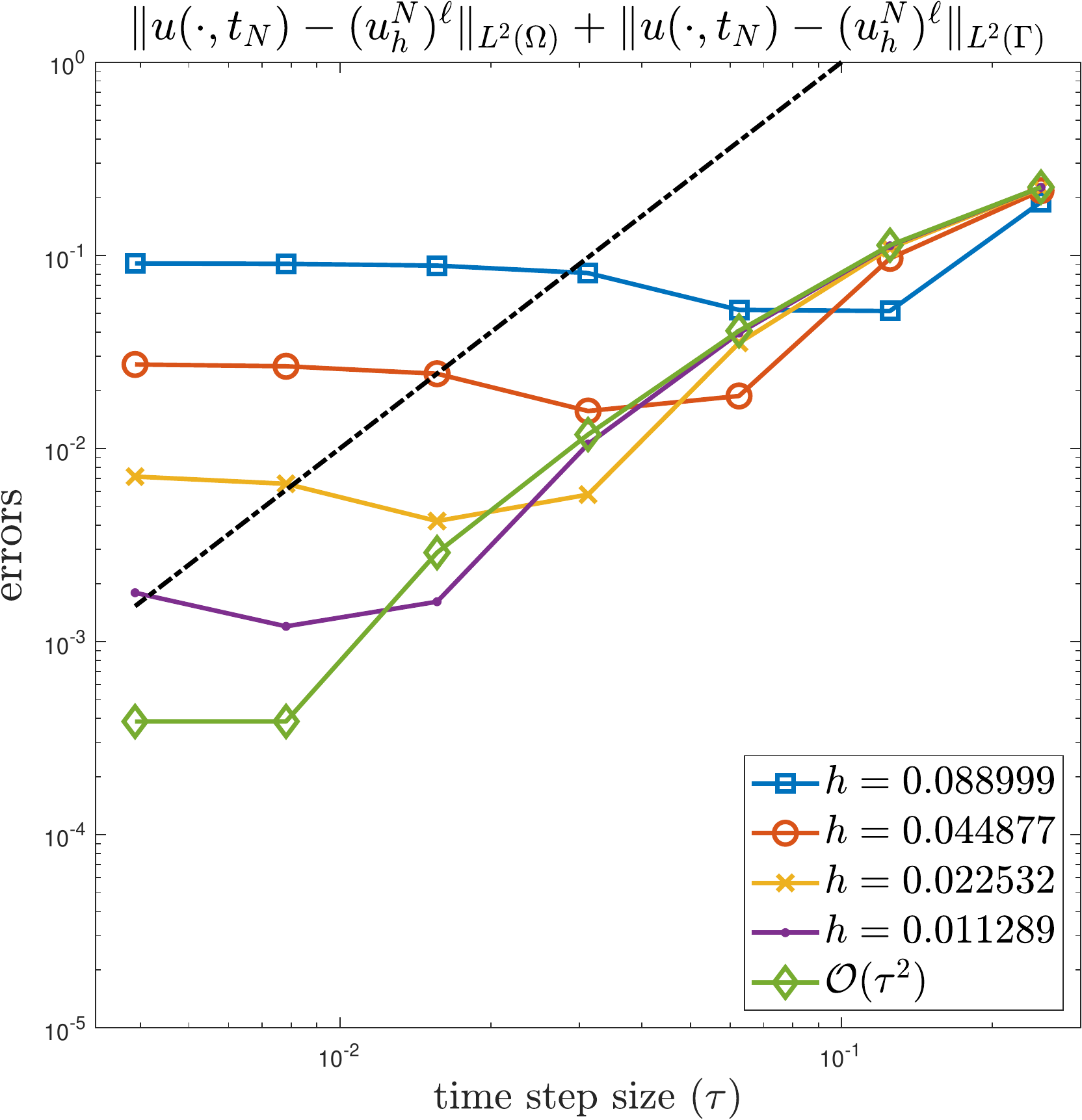}
	\caption{Temporal convergence plots for purely second order problems (Theorem~\ref{theorem: convergence - classical order})}
	\label{figure:purely second order temporal}
\end{figure}

The logarithmic plots show the errors, in the $L^2$ and $H^1$ norms, against the mesh width $(h_j)$ in Figure~\ref{figure:purely second order spatial}, and the error in the $L^2$ norm against the time step size $(\tau_j)$ in Figure~\ref{figure:purely second order temporal}. As shown by Figure~\ref{figure:purely second order spatial} and \ref{figure:purely second order temporal} the $\Landau(h^2)$ spatial and $\Landau(\tau^2)$ temporal convergence rates, respectively, are in agreement with the theoretical convergence results.
\bdh The errors in the energy norm (i.e.~bulk-surface $H^1$ norm) is plotted in Figure~\ref{figure:purely second order spatial} to allow easy comparison with the results of \cite{Hip17}, and thereby validate the setup of the chosen test problem. 
The error lines with different markers correspond to different time steps or mesh refinements, respectively. \edh

In Figure~\ref{figure:purely second order spatial} (in both plots) we can observe two regions: a region where the spatial discretisation error dominates, matching to the $\Landau(h^2)$ order of convergence of our theoretical results (the error curves are parallel to a reference line), and a region, with small time step sizes, where the temporal discretization error dominates (the error curves flatten out). 

\bbk In Figure~\ref{figure:purely second order temporal} we report on the temporal convergence rates for the problem \eqref{wave eqn - pure second-order} with purely second order dynamic boundary conditions, i.e.~the above description applies with reversed roles: Now the temporal convergence rate matches $\Landau(\tau^2)$ from Theorem~\ref{theorem: convergence - classical order} in case of the implicit midpoint rule ($s=1$) \bdh until the spatial error dominates. \edh 
The temporal convergence behaviour of the other problems is very similar as the one presented here, therefore those plots are omitted. \ebk 

\subsection{Advective dynamic boundary conditions -- Theorem~\ref{theorem: semi-discrete error bound: advective}}

\bdh

For the tests with advection terms, we consider the wave equation with advective dynamic boundary conditions \eqref{eq:wave eqn - advective} in two setups.

First we consider \eqref{eq:wave eqn - advective} with constant advection in the bulk $\advB(x,t)=(2,0)^T$, and $\advS(x,t)=(0,0)^T$ on the surface, and use the same coefficients $\mu = \beta = 1$, $\kappa = 0$, and right-hand side $f_\Om = f_\Ga = 0$ and initial values \eqref{eq:iv_func} as before.
The plots in Figure~\ref{figure:advection} show that the finite element approximation of wave equations dynamic boundary conditions \emph{with} bulk advection ($\advB\neq0$) converges with $\Landau(h^{3/2})$ (note the reference lines) which was proved in Theorem~\ref{theorem: semi-discrete error bound: advective}.
\begin{figure}[htp]
	\centering
	\includegraphics[width=\linewidth]{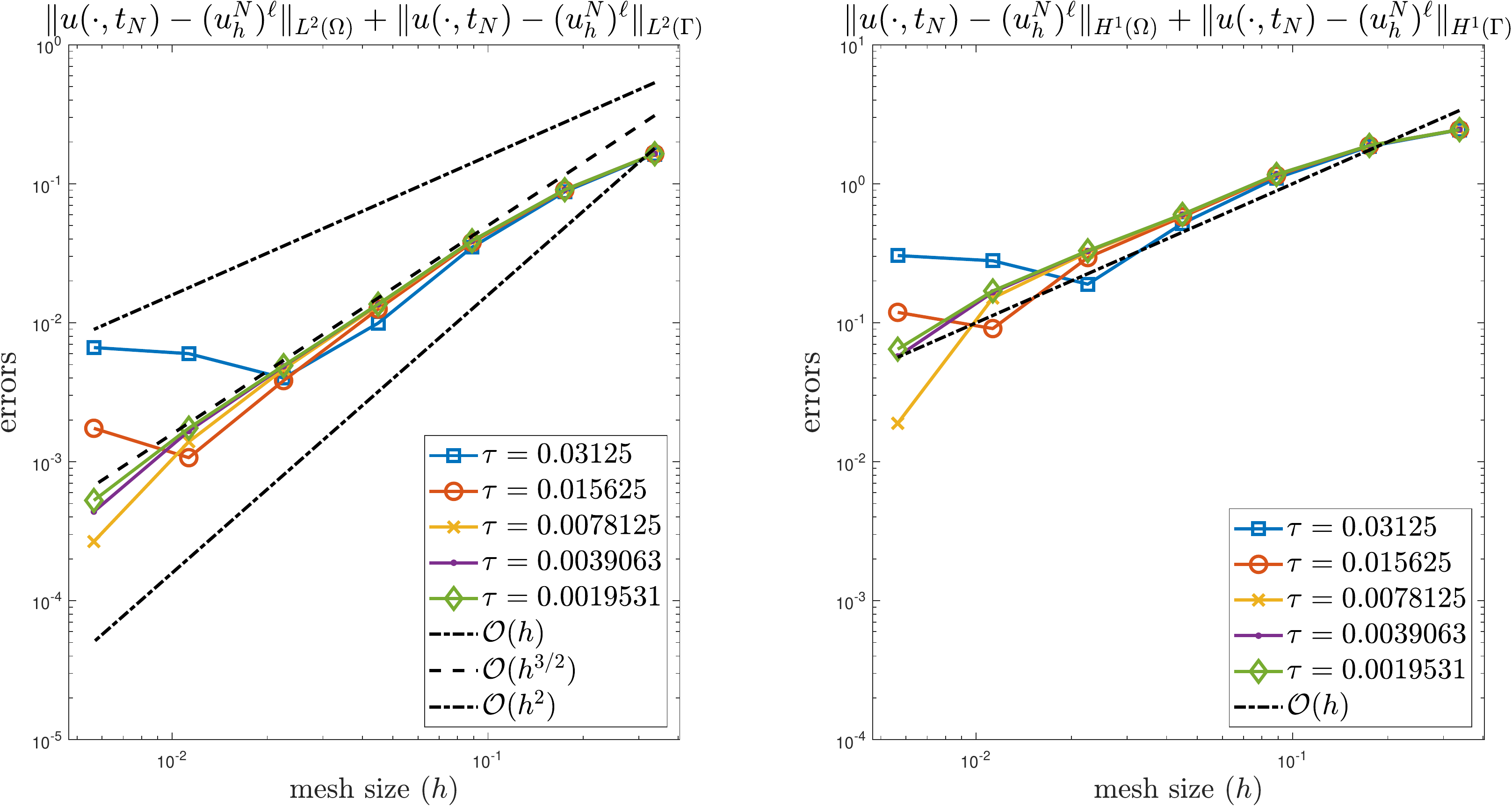}
	\caption{Spatial convergence plots for problems with advective dynamic boundary conditions (Theorem~\ref{theorem: semi-discrete error bound: advective})}
	\label{figure:advection}
\end{figure}

Second we consider the same problem with dynamic boundary conditions but with advection \emph{only} on the surface: $\advB(x,t) = (0,0)^T$ and $\advS(x,t)=(-x_2,x_1)^T$. 
The plots in Figure~\ref{figure:only surface advection} show $\Landau(h^2)$ which is in agreement with our theoretical results from Theorem~\ref{theorem: semi-discrete error bound: advective}. \edh
\begin{figure}[htbp]
	\centering
	\includegraphics[width=\linewidth]{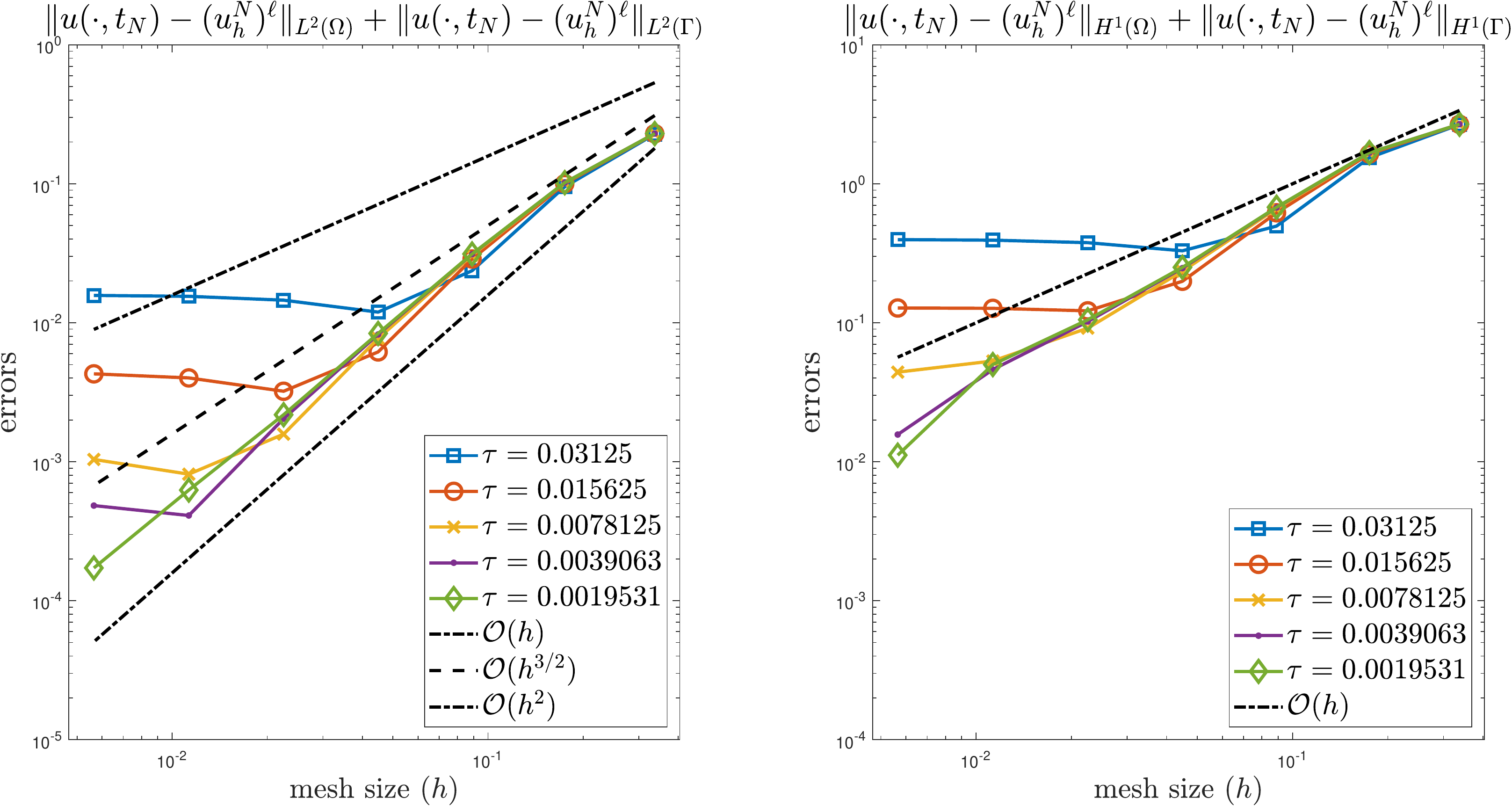}
	\caption{Spatial convergence plots for problems with advective dynamic boundary conditions (Theorem~\ref{theorem: semi-discrete error bound: advective})}
	\label{figure:only surface advection}
\end{figure}

\subsection{\Sdamp\ dynamic boundary conditions -- Theorem~\ref{theorem: semi-discrete error bound: strong damping}}

For the tests with strong damping, we consider the wave equation with \sdamp\ dynamic boundary condition \eqref{eq:wave eqn - strong damping} with $\mu = \beta = 1$, \bdh $\kappa = 0$, $f_\Om = f_\Ga = 0$ and damping coefficients $d_\Om=0.1$, $d_\Ga=0.2$. \edh
The initial values are the same as in \eqref{eq:iv_func}.

The plots in Figure~\ref{figure:strong damping} show the same spatial convergence plots as described before, but for the above problem with \sdamp\ dynamic boundary conditions. We note here that for the case $\beta \neq d_\Ga/d_\Om$ the convergence order of Theorem~\ref{theorem: semi-discrete error bound: strong damping} is not observed. The expected optimal second-order convergence rate is illustrated by our numerical experiment, \cf the remark after Theorem~\ref{theorem: semi-discrete error bound: strong damping}.
\begin{figure}[htbp]
	\centering
	\includegraphics[width=\linewidth]{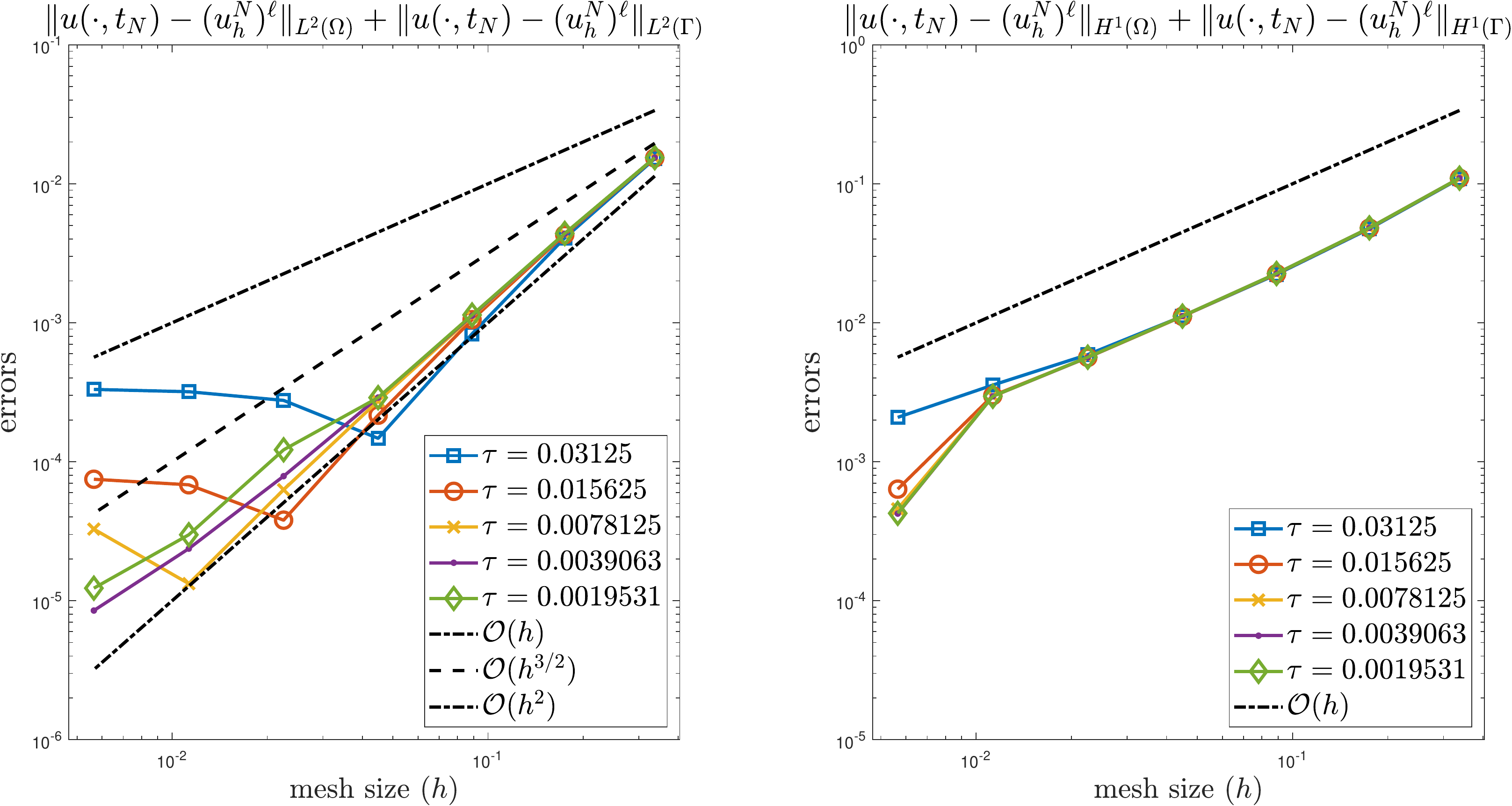}
	\caption{Spatial convergence plots for a problem with \sdamp\ dynamic boundary conditions (Theorem~\ref{theorem: semi-discrete error bound: strong damping})}
	\label{figure:strong damping}
\end{figure}

\subsection{Acoustic boundary conditions -- Theorem~\ref{theorem: semi-discrete error bound: acoustic boundary conditions}}

Finally, we consider the wave equation with acoustic boundary condition \eqref{eqs:ex-acoustic-pde} with unit constants, and with $f_\Om$ and $f_\Ga$ choose such that the exact solution is $u(x,t) = \sin(2\pi t)(x_1^2+x_2^2)^{k/2}$ and $\delta(x,t) = k (2\pi)^{-1}\cos(2\pi t)(x_1^2+x_2^2)^{k/2}$ for $k=1.2$.
The initial values are the interpolations of the exact initial data.

The plots in Figure~\ref{figure:acoustic boundary conditions} show the same spatial convergence plots as described previously, but for the above problem with acoustic boundary conditions. The $\Landau(h^{3/2})$ spatial convergence rates are in agreement with our theoretical results proved in Theorem~\ref{theorem: semi-discrete error bound: acoustic boundary conditions}.
\begin{figure}[hbp]
	\centering
	\includegraphics[width=1\linewidth]{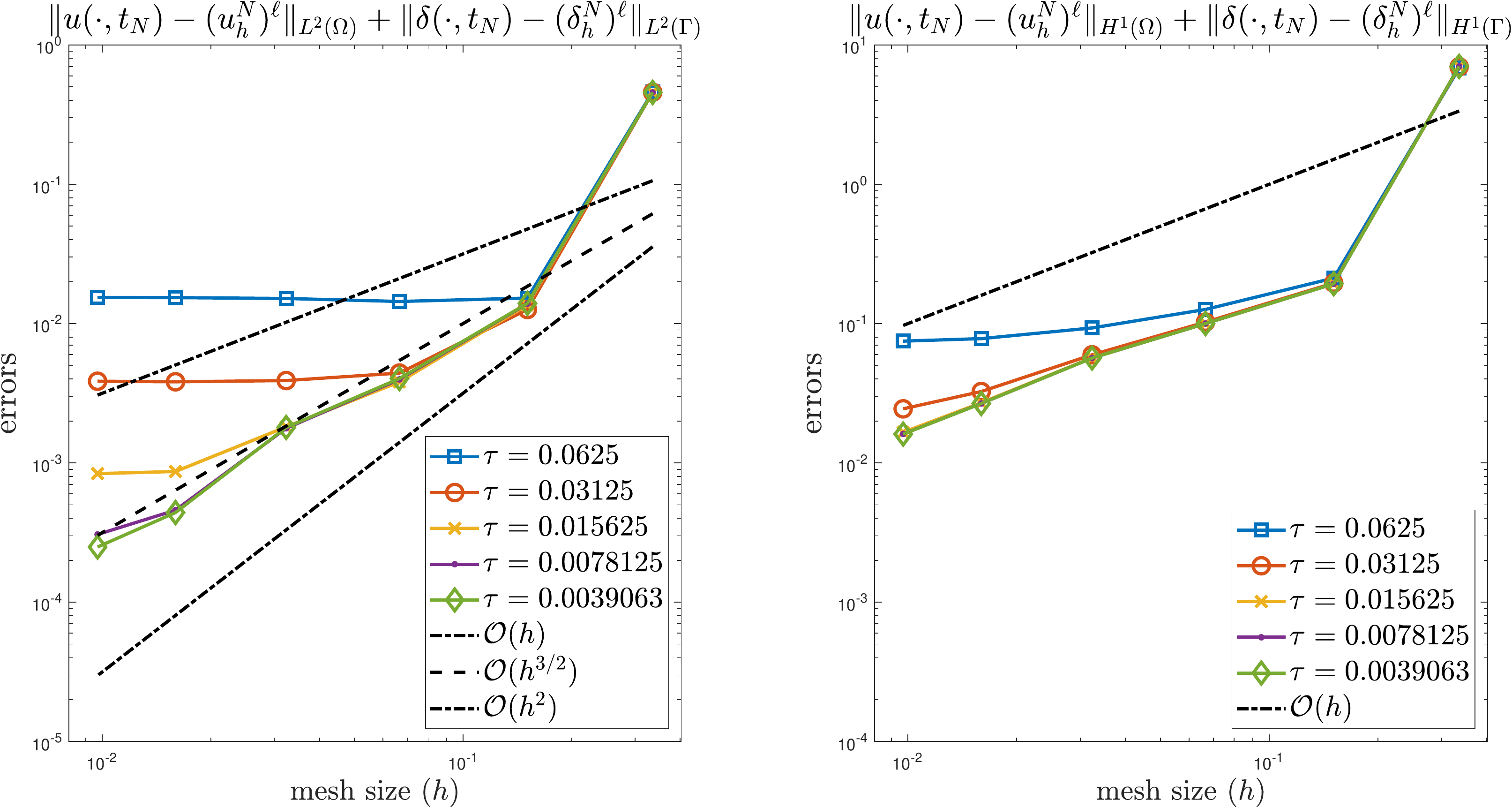}
	\caption{Spatial convergence plots for a problem with acoustic boundary conditions (Theorem~\ref{theorem: semi-discrete error bound: acoustic boundary conditions})}
	\label{figure:acoustic boundary conditions}
\end{figure}

\clearpage

\section{Conclusions}
Albeit the fact that the solutions of wave equations with dynamic boundary conditions have better regularity and stability properties on the boundary than classical Neumann or Dirichlet boundary conditions, our results show that in some cases one actually can not expect $\mathcal O(h^2)$ convergence in the $L^2$ norm.
As our numerical tests show, these reduced convergence rates can actually be observed in simulations and are not due to a crude error analysis.

\section*{Acknowledgement}
We are grateful to Prof.~G.~Zouraris for bringing the fundamental paper of \cite{Fairweather} to our attention.
We also thank Prof.~Marlis Hochbruck and Prof.~Christian Lubich for our invaluable discussions on the topic, and Jan Leibold for careful proofreading.

\bbk We thank two anonymous referees for the comments which helped us to improve on a previous version of this paper. \ebk 

This work is funded by Deutsche Forschungsgemeinschaft, SFB 1173. 


\end{document}